\newcommand{\marg}[1]{\footnote{#1}\marginpar[\hfill\tiny\thefootnote$\rightarrow$]{$\leftarrow$\tiny\thefootnote}}
\newcommand{\Jonathan}[1]{\marg{(Jonathan) #1}}
\numberwithin{equation}{subsection}
\theoremstyle{plain}
\newtheorem{theorem}[equation]{Theorem}
\newtheorem{lemma}[equation]{Lemma}
\newtheorem{proposition}[equation]{Proposition}
\newtheorem{corollary}[equation]{Corollary}
\theoremstyle{remark}
\newtheorem{remark}[equation]{Remark}
\theoremstyle{definition}
\newcommand{\comment}[1]{}
\newcommand{\bbA}{\mathbb{A}}
\newcommand{\EE}{\mathbb{E}}
\newcommand{\FF}{\mathbb{F}}
\renewcommand{\LL}{\mathbb{L}}
\newcommand{\GG}{\mathbb{G}}
\renewcommand{\CC}{\mathbb{C}}
\renewcommand{\ZZ}{\mathbb{Z}}
\renewcommand{\QQ}{\mathbb{Q}}
\newcommand{\bA}{\mathbf{A}}
\newcommand{\bL}{\mathbf{L}}
\newcommand{\bZ}{\mathbf{Z}}
\newcommand{\eM}{\mathfrak{M}}
\newcommand{\fE}{\mathfrak{E}}
\newcommand{\fF}{\mathfrak{F}}
\newcommand{\fM}{\eM}
\newcommand{\tfM}{\widetilde{\fM}}
\newcommand{\sA}{\mathscr{A}}
\newcommand{\cB}{\mathcal{B}}
\newcommand{\sD}{\mathscr{D}}
\newcommand{\tD}{\widetilde{D}}
\newcommand{\sE}{\mathscr{E}}
\newcommand{\sL}{\mathcal{L}}
\newcommand{\sM}{\mathcal{M}}
\newcommand{\sN}{\mathcal{N}}
\newcommand{\cO}{\mathcal{O}}
\newcommand{\sP}{\mathcal{P}}
\newcommand{\sT}{\mathscr{T}}
\newcommand{\sX}{\mathscr{X}}
\newcommand{\tX}{\widetilde{X}}
\newcommand{\uI}{\underlinenormal{I}}
\newcommand{\oC}{\overlinenormal{C}}
\newcommand{\oE}{\overlinenormal{E}}
\newcommand{\oM}{\overlinenormal{M}}
\newcommand{\tsA}{\widetilde{\sA}}
\newcommand{\spec}{\mathrm{Spec}\;}
\newcommand{\rel}{\mathrm{rel}}
\newcommand{\orb}{\mathrm{orb}}
\newcommand{\relorb}{\mathrm{rel}}
\newcommand{\vir}{\mathrm{vir}}
\renewcommand{\Gm}{{{\bf G}_m}}
\newcommand{\BGm}{\cB\Gm}
\def\tototi{\mathbin{\mathop{\otimes}\limits^{\raise-1pt\hbox
{$\scriptscriptstyle {\rm L}$}}}}
\begin{document}
\title[Gromov--Witten invariants (\today)]{Relative and orbifold Gromov--{W}itten invariants}

\author[D. Abramovich]{Dan Abramovich}
 \thanks{Research of D.A. partially supported by NSF grants DMS-0335501, DMS-0603284, DMS-0901278, and DMS-1162367}
\address{Department of Mathematics, Box 1917, Brown University,
Providence, RI, 02912, U.S.A}
\email{abrmovic@math.brown.edu}
\author[C. Cadman]{Charles Cadman}
\thanks{Research of C.C. partially supported by NSF grant No. 0502170}
\address{Seattle, Washington, U.S.A}
\email{math@charlescadman.com}
\author[J. Wise]{Jonathan Wise}
\address{Univeristy of Colorado, Campus Box 395, Boulder, CO 80309-0395, U.S.A}
\email{jonathan.wise@colorado.edu}
\thanks{Research of J.W. partially supported by NSF post-doctoral fellowship MS-PRF 0802951 and NSA Young Investigator's Grant \#H98230-14-1-0107}
\date{\today}
\maketitle

\setcounter{tocdepth}{1}
\tableofcontents

\section{Introduction}

\subsection{Gromov-Witten invariants}
\label{sec:intro-invariants} Gromov--Witten invariants are deformation invariant numbers associated to a smooth variety $X$ over $\CC$ that are closely related to the numbers of curves in that variety with prescribed incidence to specified homology classes.  In this paper we focus on invariants associated to counting {\em rational} curves with prescribed tangencies along a fixed smooth divisor $D\subset X$, in addition to the prescribed incidence conditions. 

A seemingly mysterious phenomenon was observed in~\cite{CC}, where 
rational curves in the projective plane $X = \PP^2$ tangent to a smooth plane cubic $D$ were enumerated using the {\em orbifold Gromov--Witten invariants}  of root stacks $X_r$ branched along $D$. While it was observed that for low index $r$ the invariants exhibit erratic behavior, for large and divisible $r$ they stabilized.  Cadman and Chen compared these  with {\em relative Gromov--Witten invariants} of the pair $(X,D)$ computed earlier by Gathmann~\cite{Gath}.  It is no surprise that these orbifold and relative invariants agree when they are both enumerative: they count the same thing.  Remarkably, however, one is enumerative if and only if the other is, and the invariants coincide even when they are not enumerative.

Theorem ~\ref{maintheorem} below lays the question to rest in genus~$0$, showing that the rational orbifold invariants of  $X_r$ for large and divisible $r$ coincide with the rational relative invariants of $(X,D)$, for any $X$ and $D$.
We discuss the context of these invariants in Section \ref{Sec:context} below. Section \ref{sect:genus1} demonstrates that this genus-0 coincidence does not generalize. Indeed higher-genus invariants of $X_r$ do not stabilize, and thus cannot coincide with higher-genus relative invariants. A general correspondence between them remains to be discovered.

\subsection{Statement of the theorem}
\label{sec:thm-statement} 
We require the following input, standard in relative Gromov--Witten theory.  We fix
\begin{itemize}
\item  a smooth complex projective variety $X$, 
\item a smooth
divisor $D \subset X$ , 
\item a curve class $\beta\in H_2(X,\ZZ)$,
\item an $n$-tuple  of nonnegative integers $\mathbf{k} = (k_1,\ldots k_n)$, with $\sum k_i =
\beta\cdot D$, 
\item cohomology classes
$\gamma_1\ldots,\gamma_n$ where $\gamma_i\in H^*(X,\QQ)$ when $k_i =
0$ and $\gamma_i\in H^*(D,\QQ)$ when $k_i>0$, and
\item nonnegative integers
$a_1,\ldots,a_n$.
\end{itemize}
Finally, for $r$ a positive integer, we denote by $X_r= X_{D,r} = \sqrt[r]{X,D}$ the stack obtained by taking the $r$-th root of $X$ along $D$.

\begin{theorem}\label{maintheorem}
If $r$ is any sufficiently large and divisible natural number then the following relative and orbifold invariants coincide.
\begin{equation*}
\notn{\bigg\langle \prod_{i=1}^n\tau_{a_i}(\gamma_i,k_i)
\bigg\rangle^{(X,D)}_{0,\beta}}{relative GW invariant}
\ \ = \ \
\notn{\bigg\langle \prod_{i=1}^n\tau_{a_i}(\gamma_i,k_i)
\bigg\rangle^{X_r}_{0,\beta}}{orbifold GW invariant}
\end{equation*}
\end{theorem}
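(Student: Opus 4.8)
The plan is to compare both invariants to a common refinement built from a space of maps to an "expanded" or "logarithmic" target that simultaneously dominates the relative and orbifold geometries. Concretely, I would introduce the Artin stack $\sA$ (a stack of target degenerations, essentially $[\bbA^1/\Gm]$ or its logarithmic analogue) and work with the moduli stack $\fM(\sA)'$ of stable maps to a line-bundle-target over $\sA$, together with its orbifold counterpart $\fM^{\rel}(\sA,\BGm)'$. The key geometric input is that the stable maps theory of $X$ relative to $D$ and the orbifold stable maps theory of $X_r$ both admit maps to such a stack, and that the virtual classes are pulled back from a common virtual class on the $\sA$-side. The technical heart is then to show that, after pushing forward to $\fM(\sA)'$, the difference between the relative and orbifold virtual classes is supported on loci that do not affect the genus-$0$ invariants in question once $r$ is large and divisible.

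The steps, in order, would be: (1) Recall the construction of $X_r = \sqrt[r]{X,D}$ and the orbifold Gromov--Witten theory of $X_r$, and verify that an orbifold stable map of genus $0$ and class $\beta$ with the prescribed contact orders $k_i$ has, for $r$ large, all of its orbifold structure concentrated at marked points (no stacky nodes or stacky components survive in the virtually relevant locus). (2) Set up the comparison stack of maps to $\sA$ and factor both theories through it; this is where the stacks $\fM(\sA)'$ and $\fM^{\rel}(\sA,\BGm)'$ enter. (3) Establish a virtual birational correspondence between the relative moduli space (with Jun Li's expanded degenerations, or its logarithmic reformulation) and the orbifold moduli space of $X_r$, compatible with the evaluation maps $\mathrm{ev}_i$ and the psi-classes $\psi_i$ that define $\tau_{a_i}$. (4) Show the virtual classes match under this correspondence, using that the obstruction theories relative to $\sA$ agree, and that the normalization/boundary contributions that might differ are of higher codimension than the cycle being integrated — here the hypothesis that $r$ is divisible by the relevant contact orders (and large enough relative to $\beta\cdot D$ and the $k_i$) is exactly what kills the corrections. (5) Conclude equality of the integrals defining the two bracketed invariants.

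The main obstacle I anticipate is Step (3)–(4): precisely controlling how the expanded/bubbled components on the relative side correspond to the "ghost" orbifold components (rational curves mapping into a fixed fibre of the normal bundle with prescribed orbifold structure) on the $X_r$ side, and showing the excess-intersection contributions cancel. This is delicate because for small or non-divisible $r$ the orbifold invariants genuinely differ from the relative ones (the "erratic behavior" mentioned for the cubic in $\PP^2$), so the argument must use the divisibility and largeness of $r$ in an essential, quantitative way — likely by a dimension count showing the problematic loci have excess dimension growing with the number of orbifold bubbles, hence contribute zero once $r$ exceeds an explicit bound depending only on $\beta$, $D$, and $\mathbf{k}$. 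A secondary difficulty is bookkeeping the marked-point insertions: one must check that $\tau_{a_i}(\gamma_i,k_i)$ is transported correctly, i.e.\ that a contact order $k_i$ on the relative side matches an age-$k_i/r$ (twisted sector) condition on the orbifold side, and that the psi-classes are identified under the comparison map — a standard but necessary compatibility.
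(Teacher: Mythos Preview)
Your overall architecture---factoring through universal stacks over $\sA=[\bbA^1/\Gm]$ and applying a Costello-style pushforward comparison---is the right one and matches the paper. However, two concrete pieces are missing or misconceived.

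First, there is no natural map in either direction between $\oM^{\rel}(X,D)$ and $\oM^{\orb}(X_r)$, so your ``virtual birational correspondence'' in Step~(3) cannot be set up directly. The paper inserts an intermediate space $\oM^{\rel}(X_r,D_r)$ (relative stable maps to the \emph{root stack}) which carries stabilization maps $\Psi$ and $\Phi$ to the relative and orbifold spaces respectively; the theorem is then reduced to $\Psi_\ast[\text{vir}]=[\text{vir}]$ and $\Phi_\ast[\text{vir}]=[\text{vir}]$, each proved by its own Costello diagram.

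Second, and more seriously, your proposed mechanism for why large divisible $r$ suffices---a dimension count showing ``excess'' boundary loci contribute zero---is not how the argument works, and it is unclear such a count could be made rigorous (the genus-one counterexample shows naive dimension arguments fail). The actual role of large $r$ is this: one isolates an open substack $\fM^{\orb}_{g=0}(\sA_r)' \subset \fM^{\orb}_{g=0}(\sA_r)$ defined by degree constraints $\lvert\deg L\rest{D}\rvert<\tfrac12$ on every proper subcurve, and proves by an explicit deformation argument that totally nondegenerate objects are \emph{dense} there, so the base map in the Costello square is birational. The condition on $r$ (namely $r>2\max_{\gamma\leq\beta}\lvert D.\gamma\rvert$ and $r>k_i$) is precisely what guarantees the image of $\oM^{\orb}_{g=0}(X_r,\beta)$ lands inside this open substack. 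Once the base map is birational and the relative obstruction theories are pulled back, Costello's theorem gives exact equality of virtual classes---there are no correction terms to cancel.
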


Our notation is explained in the following section.  There is also a table of notation in Appendix~\ref{app:notn}.

\subsection{Conventions} \label{Sec:conv}

\subsubsection{Relative stable maps} We use the moduli space
  \begin{equation*}
    \notn{\oM^{\rel}(X,D)}{moduli of relative stable maps}
    := \oM_{g,(k_1,\ldots,k_n)}^{\rel}(X,D,\beta)
  \end{equation*}
  of relative stable maps to $(X,D)$, where
\begin{itemize}
\item the source curve has genus $g$ and $n$ marked points,
\item the $i$-th marked point has contact order $k_i$ with $D$, and
\item the homology class of the curve is $\beta$.
\end{itemize}

Let $e_i^\rel$ be the $i$-th evaluation map, where 
\begin{align*}
  e_i^\rel & :\oM^\rel(X,D)\to X && \text{for $k_i = 0$, and} \\ 
  e_i^\rel & :\oM^\rel(X,D)\to D && \text{for $k_i > 0$}.
\end{align*}

Let $s_i: \oM^\rel(X,D)\to \oC$ be the $i$-th section of the universal {\em contracted curve mapping to $X$}, and let $\psi_i= c_1 s_i^*(\omega_{\oC/\oM^\rel(X,D)})$. The stack $\oM^{\rel}(X,D)$ admits a virtual fundamental class $\bigl[\oM^{\rel}(X,D)\bigr]^\vir$ defined in \cite{Li2}.

With this notation we set
\begin{equation*}
  \bigg\langle \prod_{i=1}^n\tau_{a_i}(\gamma_i,k_i)
  \bigg\rangle^{(X,D)}_{g,\beta}
  \ \ := \ \ \int_{\bigl[\oM^{\rel}(X,D)\bigr]^\vir} \psi_1^{a_1}e_1^*\gamma_1\cdots  \psi_n^{a_n}e_n^*\gamma_n.
\end{equation*}

\subsubsection{Orbifold stable maps} We use the moduli space
  \begin{equation*}
    \notn{\oM^{\orb}(X_r)}{moduli of orbifold stable maps}
    := \oM_{g,(k_1,\ldots,k_n)}(X_r,\beta)
  \end{equation*}
 of  stable maps to $X_r$, where
\begin{itemize}
\item the curve has genus $g$ and $n$ marked points,
\item the {\em coarse} evaluation map at the $i$-th marked point (defined below)
  \begin{equation*}
  e_i^\orb
    :\oM^{\orb}(X_r)\to \uI(X_r)
  \end{equation*}
  lands in the twisted sector of age $k_i/r$ (which is isomorphic to $X$ if $k_i=0$ and to $D$ if $k_i>0$), and
\item the homology class of the curve is $\beta$.
\end{itemize}

We have used the notation $\notn{\uI(X_r)}{coarse mod. sp. of inertia stack}$  for the coarse moduli space of the inertia stack
of $X_r$, which has $r$ components:
$$\uI(X_r)\cong X\sqcup D\sqcup\cdots\sqcup D.$$  The components isomorphic to $D$ are called twisted sectors, and are labeled by the ages $k_i/r \in [\:0,1)\cap \frac{1}{r}\bZ$. 

Let $s_i: \oM^\orb(X_r)\to \oC$ be the $i$-th section of the {\em universal coarse curve mapping to $X$}, and let $\psi_i= c_1 s_i^*(\omega_{\oC/\oM^\orb(X_r)})$. The stack $\oM^{\orb}(X_r)$ admits a virtual fundamental class  $\bigl[\oM^{\orb}(X_r)\bigr]^\vir$ defined in \cite{AGV-GW}.

With this notation we set
$$\left\langle \prod_{i=1}^n\tau_{a_i}(\gamma_i,k_i)
\right\rangle^{X_r}_{g,\beta}
\ \ := \ \ \int_{[\oM^{\orb}(X_r)]^\vir} \psi_1^{a_1}e_1^*\gamma_1\cdots  \psi_n^{a_n}e_n^*\gamma_n.$$

\subsection{Context}\label{Sec:context}

There are two essential ingredients in Gromov--Witten theory: a proper moduli space on which to do intersection theory, and a virtual fundamental class of the expected dimension in the homology of that moduli space.
As stated above, we are interested in counting rational curves in $X$ with prescribed incidence conditions, as well as prescribed tangencies along $D$.  
The introduction of tangencies makes these ingredients more subtle.  Since a tangency can degenerate to one of higher order, it is not obvious how to produce a proper moduli space, and since a tangency can be deformed to lower order, it is not obvious how to do the deformation theory necessary to produce a virtual fundamental class.

There are now several solutions to these problems.  The first is the theory of \emph{relative stable maps}, introduced by  A. M. Li and Y. Ruan in \cite{Li-Ruan}, with substantial contributions from Ionel--Parker \cite{Ionel-Parker1,Ionel-Parker2}, Gathmann \cite{Gathmann} and others.  Its algebro-geometric incarnation is due to J. Li~\cite{Li1,Li2}.  In this theory, tangencies are prevented from degenerating to higher order by allowing the target variety to expand, in close analogy to the way a Deligne--Mumford stable curve might expand to prevent a marked point from colliding with a node.  The deformation theory of maps from curves into expanded targets still remains quite subtle, however.

A second solution~\cite{cadman} is to change the target variety by a \emph{root construction}.  The variety is replaced by a stack that is isomorphic to the original variety away from the divisor, but in which the divisor is replaced by a ``stacky'' version of itself with a cyclotomic stabilizer group.  Provided that the order of the root construction is taken to be large enough, the concept of tangency to the divisor in the original variety can be replaced with transversal contact to the stacky divisor in the root stack.  Transversal contact is an open condition, so the ordinary theory of twisted stable maps~\cite{AV} applies to yield a proper moduli space and a virtual fundamental class via straightforward deformation theory.  The disadvantage of this theory, as compared to relative stable maps, is that it may include extraneous information in higher genus; see Section~\ref{sect:genus1}.

Cadman and Chen computed the orbifold invariants for rational curves with tangency to a smooth plane cubic~\cite{CC} and observed their agreement with Gathmann's earlier calculation of the relative invariants~\cite{Gath}, even when neither invariant is enumerative.  Thus we arrive at the coincidence observed by Cadman and Chen~\cite{CC}, already described in Section~\ref{sec:intro-invariants}.  Our general comparison result, Theorem~\ref{maintheorem}, explains this coincidence and generalizes it to rational curves in arbitrary targets.
%
%

\subsection{The intermediary}
Our comparison goes by way of a third theory, due to Abramovich and Fantechi~\cite{AF}, that combines the advantages of both while avoiding some of the disadvantages.  This ``relative--orbifold'' theory furnishes a correspondence between the relative and orbifold moduli spaces
\begin{equation*}
  \xymatrix{
    & \oM^{\relorb}(X_r, D_r) \ar[dr]^\Phi \ar[dl]_{\Psi} \\
    \oM^{\rel}(X, D) & & \oM^{\orb}(X_r), 
  }
\end{equation*}
see Section \ref{Sec:intermediate-space}. 

It is shown  in~\cite{AMW} that $\Psi$ is nearly an isomorphism%
---it is something like a root stack construction---and  that it identifies the virtual fundamental classes via pushforward, so our task is primarily to study the map $\Phi$.  This map is not an isomorphism, even in genus zero and for large $r$, but we will show that in genus zero and for a suitable choice of $r$ it nevertheless carries one virtual fundamental class to the other by push-forward, and therefore identifies the Gromov--Witten invariants.

\subsection{Other counting theories and comparisons}

There are at least two more theories of stable maps relative to a divisor, both based on the theory of logarithmic geometry.  The first of these was defined by B.\ Kim by introducing logarithmic structures into J.\ Li's degenerations~\cite{kim}.  Later work of Gross and Siebert~\cite{GS} and Abramovich and Chen~\cite{Chen,AC}   made it possible to study logarithmic stable maps without expanding the target variety.  The relationships between these theories, as well as with the original theory of J.\ Li and the orbifold theory of Abramovich and Fantechi, are treated in~\cite{AMW}.


\subsection{Counterexample in genus 1}
\label{sect:genus1}

Note that Theorem~\ref{maintheorem} applies only to genus zero invariants.  The necessity of this restriction is evident in the following example, which was shown to us by D.\ Maulik~\cite{Maulik}.  Let $E$ be an elliptic curve and let $X=E\times\PP^1$.  Let $D = X_0\cup X_{\infty}$, the union of the fibers of $X$ over $0$ and $\infty\in\PP^1$.  Let $f\in H_2(X)$ be the class of a fiber of $X\to\PP^1$.  Then the relative invariant with no insertions vanishes: $\langle\:\rangle^{(X,D)}_{1,f}=0$.  A simple explanation for this is that the invariant remains the same if we take a cover of $\PP^1$ branched at $0,\infty$, and at the same time it is multiplied by the degree of the cover.  Note that the space of genus~$1$ relative maps to $(X,D)$ of class $f$ has expected dimension $0$, even though the actual dimension is $1$.

Let $X_{r,s}$ be the stack obtained from $X$ via an $r$-th root construction on $X_0$ and an $s$-th root construction on $X_{\infty}$.  The space $\oM_{1,0}(X_{r,s},f)$ has a $1$-dimensional component and $r^2-1+s^2-1$ components of dimension $0$.  The $1$-dimensional component is isomorphic to the stack $\sP_{r,s}$, %
obtained from $\PP^1$ by an $r$-th root at $0$ and an $s$-th root at infinity.  The remaining components exist because a morphism $E\to E\times B\mu_r$ which is the identity onto the first factor is determined by a $\mu_r$-torsor over $E$.  There are $r^2$ choices for the $\mu_r$-torsor, but the trivial torsor already appears in the $1$-dimensional component.

The obstruction bundle on the $1$-dimensional component is the tangent bundle, which has degree $1/r+1/s$.  The $0$-dimensional components count with precisely their degree, which takes into account the automorphism group of the torsor.  We may therefore calculate
\begin{equation*}
  \langle\rangle^{\sX_{s,r}}_{1,f} = \frac{1}{r}+\frac{1}{s}+\frac{r^2-1}{r}+\frac{s^2-1}{s} = r+s.
\end{equation*}

We interpret this discrepancy between the relative and twisted Gromov--Witten invariants to be a result of the nontriviality of the Picard group of $E$.  We leave the formulation of a more precise statement to anyone interested in pursuing the question.




\subsection{Acknowledgements}

We gratefully acknowledge the help of Barbara Fantechi in understanding relative stable maps, Martin Olsson's help with $Hom$-stack, Davesh Maulik for a  crucial example in genus 1, and Angelo Vistoli with his insight on root stacks. In addition we thank Jarod Alper,
Linda Chen,
Alessio Corti,
Johan de Jong,
and Michael Thaddeus,
for helpful discussions at various stages of this project. 
Much progress was made while Abramovich and Wise were visiting MSRI in Spring 2009. We thank MSRI and the Algebraic Geometry program organizers for the opportunity afforded us to use its exciting environment. 

\section{Method of proof}

\subsection{An intermediate moduli space}\label{Sec:intermediate-space}

There is not a natural map in either direction between $\oM^\rel(X,D)$ and $\oM^\orb(X_r)$.  We will therefore require a third moduli space, in which both orbifold and relative geometry are present, to mediate between these moduli spaces.

We will use $\oM^\rel(X_r, D_r)$ as the intermediary (where $D_r$ is the $r$-th root divisor).  We denote by $e_i$ the $i$-th evaluation map, where for $k_i=0$ we have $e_i:\oM^\rel(X_r, D_r) \to X$ and for $k_i>0$ we have $e_i:\oM^\rel(X_r,D_r) \to D$.

Let $s_i: \oM^\rel(X_r,D_r)\to \oC$ be the $i$-th section of the universal \textit{coarse contracted curve} mapping to $X$, and let $\psi_i= c_1 s_i^*(\omega_{\oC/\oM^\rel(X_r,D_r)})$.

With this notation we set
\begin{equation*}
  \notn{\bigg\langle \prod_{i=1}^n\tau_{a_i}(\gamma_i,k_i)
    \bigg\rangle^{(X_r,D_r)}_{g,\beta}}{relative orbifold GW invariant}
  \ \ := \ \ \int_{\bigl[\oM^\rel_{g,n}(X_r,D_r,\beta)\bigr]^\vir} \psi_1^{a_1}e_1^*\gamma_1\cdots  \psi_n^{a_n}e_n^*\gamma_n.
\end{equation*}

%

\subsection{Reduction of main theorem to properties of virtual
  fundamental classes}
For any $g,r$ we have a diagram of stabilization morphisms
$$
\xymatrix{ & \oM^\rel(X_r,D_r)
  \ar[dl]_{\Psi} \ar[dr]^{\Phi}\\
 \oM^\rel(X,D) & &
 \oM^\orb(X_{r}).
 }
 $$

  We have defined the terms  so that
  \begin{itemize}
  \item $e_i^\rel\circ \Psi= e_i^{\relorb} = e_i^\rel\circ \Phi,$
  \item $\Psi^*\oC^\rel = \oC^\relorb = \Phi^*\oC^\orb,$ therefore
  \item $\Psi^*\omega_{\oC^\rel/\oM^\rel} = \omega_{\oC^\relorb/\oM^\relorb} = \Phi^*\omega_{\oC^\orb/\oM^\orb},$ and finally
  \item $\Psi^*s_i^\rel = s_i^\relorb = \Phi^*s_i^\orb.$
  \end{itemize}

  Consequently, the projection formula gives us
\begin{align*}
\bigg\langle \prod_{i=1}^n\tau_{a_i}(\gamma_i,k_i)
\bigg\rangle^{(X_r,D_r)}_{0,\beta}
& =  \int_{\Psi_*([\oM^\rel(X_r,D_r)]^\vir)} \psi_1^{a_1}e_1^*\gamma_1\cdots  \psi_n^{a_n}e_n^*\gamma_n\\
& =  \int_{\Phi_*([\oM^\rel(X_r,D_r)]^\vir)} \psi_1^{a_1}e_1^*\gamma_1\cdots  \psi_n^{a_n}e_n^*\gamma_n
\end{align*}
where the integrals are on $\oM^\rel(X,D)$ and $\oM^\orb(X_r)$, respectively.
Theorem~\ref{maintheorem} is thus a consequence of the following two theorems.


\begin{theorem} \label{Prop1}
For all $g$, $\beta$, and $r$,
\begin{equation*}
    \Psi_*\Bigl(\bigl[\oM^\relorb(X_r,D_r)\bigr]^{\vir}\Bigr)  = \bigl[\oM^{\rel}(X,D)\bigr]^{\vir}  .
\end{equation*} 
\end{theorem}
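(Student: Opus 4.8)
The plan is to reduce the comparison of virtual fundamental classes to a local statement about the map $\Psi\colon \oM^\relorb(X_r,D_r)\to\oM^\rel(X,D)$, using the fact (cited from~\cite{AMW}) that $\Psi$ is "nearly an isomorphism"—a root-stack-type construction along the relative boundary strata. First I would make precise the sense in which $\Psi$ is étale-locally a fibered product of root stacks: away from the locus of curves mapping into the expanded degenerations, $X_r\to X$ is an isomorphism and so is $\Psi$; along the boundary, a relative stable map to $(X,D)$ with contact orders $k_i$ to $D$ lifts to a relative stable map to $(X_r,D_r)$ essentially uniquely, but the lift of the \emph{target expansion} (bubbling off $\PP(\sO_D\oplus N_{D/X})$-components) acquires $r$-th roots of the relative divisors appearing in the expansion. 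The upshot should be that $\Psi$ is proper, representable, birational, and an iso over the open dense locus of maps to the unexpanded target.

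The key step is then to compare the two obstruction theories. The equalities listed just before the theorem statement ($\Psi^*\omega_{\oC^\rel/\oM^\rel}=\omega_{\oC^\relorb/\oM^\relorb}$, $\Psi^*s_i^\rel = s_i^\relorb$, and matching evaluation maps) show the decorations pull back correctly; what remains is that the \emph{perfect obstruction theories} are compatible, i.e.\ that there is a morphism $\Psi^*\EE_{\oM^\rel/\mathfrak{M}} \to \EE_{\oM^\relorb/\mathfrak{M}}$ (relative to an appropriate Artin stack of prestable or expanded curves) which is a quasi-isomorphism, or at least induces the expected compatibility of virtual pullbacks in the sense of Manolache. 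Since the deformation theory of a relative stable map to $(X_r,D_r)$ versus $(X,D)$ differs only in the stacky structure along $D_r$, and the coarse space of $X_r$ near $D_r$ is $X$ near $D$, the cotangent complexes of the two map-stacks should agree after pulling back; the root structure affects only the base stack of curves/targets, not the obstruction theory of the map relative to that base. I would then invoke Costello-type pushforward (or Manolache's virtual pushforward) for the birational, virtually-smooth-of-relative-dimension-zero morphism $\Psi$ to conclude $\Psi_*[\oM^\relorb(X_r,D_r)]^\vir = [\oM^\rel(X,D)]^\vir$.

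The main obstacle I expect is controlling $\Psi$ along the expanded boundary, where the correspondence between expansions of $(X,D)$ and expansions of $(X_r,D_r)$ is not literally a root stack but a root-stack construction applied componentwise to the rubber/bubble components, twisted by gluing data at the nodes over $D$. Making the "$\Psi$ is a root construction" assertion of~\cite{AMW} precise enough to feed into a virtual-pushforward theorem—in particular checking that no automorphisms or nontrivial gerbe structures are introduced that would change degrees—is where the real work lies. A secondary subtlety is that this theorem is asserted for \emph{all} $g$ (unlike the genus-zero restriction needed for $\Phi$), so the argument must not use any genus-zero input; fortunately the root-stack nature of $\Psi$ is genus-independent, which is consistent with the statement. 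If a direct comparison of obstruction theories proves awkward, a fallback is to stratify both moduli spaces by topological type of the source curve and expansion, and to verify the class identity stratum by stratum using the known structure of relative obstruction theories under root constructions along the boundary divisors.
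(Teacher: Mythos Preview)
Your strategy---Costello's pushforward theorem plus a comparison of obstruction theories---is exactly the paper's, but the implementation differs in one important way. You propose to compare obstruction theories for $\oM^{\rel}(X_r,D_r)$ and $\oM^{\rel}(X,D)$ relative to a base like $\fM$ (prestable curves) or a stack of expanded curves, and you correctly anticipate that the hard part is then controlling $\Psi$ along the expanded boundary. The paper sidesteps precisely this difficulty by choosing a richer base: it works relative to $\fM^{\rel}(\sA,\sD)$, the stack of relative prestable maps to the \emph{universal} smooth pair $(\sA,\sD)=([\mathbf{A}^1/\mathbf{G}_m],\mathcal{B}\mathbf{G}_m)$. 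The point is that there is a cartesian square
\[
\begin{array}{ccc}
\oM^{\rel}(X_r,D_r) & \longrightarrow & \oM^{\rel}(X,D) \\
\downarrow & & \downarrow \\
\fM^{\rel}(\sA_r,\sD_r) & \longrightarrow & \fM^{\rel}(\sA,\sD)
\end{array}
\]
and, because $X_r^{\exp}\to\sA_r^{\exp}$ is literally the base change of $X^{\exp}\to\sA^{\exp}$, the relative obstruction theories for the two vertical arrows are \emph{identical} (the torsor of lifts $C'\dashrightarrow X_r^{\exp}$ over $\sA_r^{\exp}$ is the same as the torsor of lifts $C'\dashrightarrow X^{\exp}$ over $\sA^{\exp}$, both governed by $f^*T_X^{\log}$). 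No delicate boundary analysis, no root-stack bookkeeping on the expansions, is needed for the obstruction theory comparison. The degree-$1$ hypothesis in Costello's theorem then reduces to the birationality of the bottom arrow $\fM^{\rel}(\sA_r,\sD_r)\to\fM^{\rel}(\sA,\sD)$, which is a universal statement proved by an elementary smoothing argument (both sides contain the dense locus of totally non-degenerate maps). Your fallback of stratifying by topological type would work in principle but is exactly what this choice of base avoids.
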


\begin{theorem}\label{Prop2}  If $g=0$ and $r$ is sufficiently large and divisible (depending on $\beta$) we have
\begin{equation*}
\Phi_*\Bigl(\bigl[\oM^\relorb(X_r,D_r)\bigr]^{\vir}\Bigr) = \bigl[\oM^\orb(X_r)\bigr]^{\vir} .
\end{equation*}
\end{theorem}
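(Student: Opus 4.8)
The plan is to analyze the morphism $\Phi\colon \oM^\relorb(X_r,D_r)\to\oM^\orb(X_r)$ directly and show that, in genus zero, for $r$ large and divisible, it is virtually birational in the sense that $\Phi_*$ carries one virtual class to the other. First I would describe the geometry of $\Phi$ on points: a relative stable map to the expanded pair $(X_r,D_r)$ has a target that is $X_r$ with a chain of $\PP^1$-bundle-type "accordions" (in this setting, root-stack orbifold bubbles) attached along $D_r$; the morphism $\Phi$ contracts these bubbles, producing a twisted stable map to $X_r$ proper. So the fibers of $\Phi$ parametrize the ways of reinserting a bubble chain over a point of $X_r$ where the contracted map had transversal (orbifold) contact, together with stability data. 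The key observation to exploit is that $\Psi$ is "nearly an isomorphism" (a root-stack-type modification), so the virtual class on $\oM^\relorb(X_r,D_r)$ is, up to the controlled discrepancy recorded in \cite{AMW}, pulled back from the relative side; in particular I expect $\oM^\relorb(X_r,D_r)$ and $\oM^\rel(X,D)$ to have the same virtual dimension, and then the question is whether $\Phi$ preserves dimension and multiplicity.

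The core dimension count is where the hypothesis "$g=0$ and $r$ large and divisible" enters, and I would carry it out as follows. Over a point of $\oM^\orb(X_r)$ represented by a twisted stable map $f\colon C\to X_r$ with the prescribed contact orders $k_i/r$ at the markings, the fiber of $\Phi$ is a moduli space of bubble configurations. Each bubble component is a $\PP^1$ (with appropriate orbifold structure at $0$ and $\infty$) mapping to a root-stack $\PP^1$-bubble over a point of $D$; such a component must carry at least three special points for stability, and it contributes both moduli (the target expansion can slide) and constraints. The expected dimension of this fiber is zero — this is the statement that expanding the target does not change the virtual dimension, which is a standard feature of relative Gromov--Witten theory — so generically $\Phi$ is quasi-finite onto its image. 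In genus zero the source curve is a tree of $\PP^1$'s, and the homology class $\beta$ is effective of bounded degree against $D$; once $r>\beta\cdot D$ (plus divisibility to accommodate all the $k_i$ and the intermediate contact orders forced on bubble nodes), every potential bubble component must have \emph{positive} degree onto the bubble direction, because a component mapping with degree $0$ onto the $\PP^1$-direction would be forced to be constant, hence unstable or contracted — this rigidity is exactly what fails in higher genus, where a genus-one component can map to $E\times B\mu_r$ nontrivially via a torsor with no cost, producing the extra components seen in Section~\ref{sect:genus1}. Thus for suitable $r$ the extra components of $\oM^\orb(X_r)$ relative to the image of $\Phi$ either do not exist or have excess dimension and so do not meet the virtual class; and on the main component $\Phi$ is birational, with the bubble-fiber moduli cancelling against the bubble-fiber obstructions.

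The remaining step is to promote this set-theoretic/dimension statement to the equality of virtual classes. Here I would compare obstruction theories: the perfect obstruction theory on $\oM^\relorb(X_r,D_r)$ relative to the Artin stack of expanded pairs and the one on $\oM^\orb(X_r)$ relative to the stack of orbicurves-into-$X_r$ fit into a compatible diagram over $\Phi$, because both are governed by $R\pi_*f^*T_{X_r}(-\log D_r)$-type complexes (the logarithmic/orbifold tangent complex of the root stack is the relevant object and is insensitive to expansion away from the special locus). One then invokes the cost-of-bubbling computation — expanding the target twists the obstruction theory by the line bundles $\cO(\text{node})$ along the bubble chain, and in genus zero the Euler classes of these contributions are invertible after pushforward — to conclude that $\Phi_*[\oM^\relorb(X_r,D_r)]^\vir=[\oM^\orb(X_r)]^\vir$ by the virtual pushforward criterion (à la Costello / Manolache): $\Phi$ restricted to the relevant locus is proper, of relative virtual dimension zero, and generically étale of degree one onto its image, while the "bad" components of $\oM^\orb(X_r)$ carry virtual class zero for dimension reasons. \textbf{The main obstacle} I anticipate is precisely the control of these bad, positive-dimensional extra components of $\oM^\orb(X_r)$: one must show that for $r$ large and divisible their virtual classes vanish (equivalently, that the obstruction bundle restricted there has no zero locus of the right dimension), and isolating the exact arithmetic condition on $r$ — large enough that no spurious low-degree orbifold maps survive, and divisible enough that all contact orders $k_i$ and all induced bubble-node orders are realizable — is the technical heart of the argument, and is exactly where the genus-zero hypothesis is indispensable.
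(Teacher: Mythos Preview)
Your framework is right---Costello's pushforward theorem, with a cartesian square and compatible relative obstruction theories---but the way you plan to verify the hypotheses has a genuine gap, and it is exactly the one you flag as the ``main obstacle''. You propose to analyze $\Phi$ directly on $\oM^\orb(X_r)$: show it is generically degree~$1$ onto its image, and then argue separately that any ``bad'' components of $\oM^\orb(X_r)$ not hit by $\Phi$ carry zero virtual class. That second step is not a dimension count: the virtual class lives in the expected dimension regardless of actual dimension, and there is no general principle forcing it to vanish on extra components. Your sketch gives no mechanism for this vanishing beyond the heuristic that spurious components ``should'' be obstructed, and the Euler-class-of-bubble-contributions computation you allude to is not set up precisely enough to carry the weight.

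The paper sidesteps this entirely by moving the birationality statement to the \emph{universal} level. The cartesian square~\eqref{Eq:Costello} sits over $\fM^{\rel}_{g=0}(\sA_r,\sD_r)' \to \fM^{\orb}_{g=0}(\sA_r)'$, where $\sA=[\bA^1/\Gm]$ and $\fM^{\orb}_{g=0}(\sA_r)'$ is the open substack cut out by two numerical conditions (total degree equals sum of ages at smooth points; degree on every proper subcurve lies in $(-\tfrac12,\tfrac12)$). Two things are then proved: (a)~for $r$ exceeding $2\max_{0\le\gamma\le\beta}|D.\gamma|$ and all the $k_i$, the map $\oM^{\orb}_{g=0}(X_r,\beta)\to\fM^{\orb}_{g=0}(\sA_r)$ factors through this open substack (Lemma~\ref{lem:twisting-choice}); and (b)~the totally non-degenerate locus is dense in $\fM^{\orb}_{g=0}(\sA_r)'$ (Lemma~\ref{lem:non-degen-dense-in-M-orb}), proved by an explicit hands-on deformation of $(C,L,s)$ smoothing nodes one at a time, using the degree bounds to control $L$ on each component. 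Since the same locus is dense on the relative side, the bottom arrow is birational, and Costello's theorem applies directly---there are no ``bad components'' to worry about, because the base itself is irreducible of the right dimension. Your compatibility of obstruction theories is handled cleanly by the cartesian square $X_r^{\exp}\to X_r$ over $\sA_r^{\exp}\to\sA_r$, which identifies the two torsors of lifts (Section~\ref{sec:proof2}). The missing idea in your proposal is precisely this passage to $\sA$: it converts an intractable question about components of $\oM^\orb(X_r)$ into a concrete deformation problem for a line bundle with section on a genus-zero twisted curve.
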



We will prove Theorems~\ref{Prop1} and~\ref{Prop2} using a technique introduced by Costello.

\subsection{Costello's diagrams}

The applications of Costello's method to the proofs of Theorems~\ref{Prop1} and~\ref{Prop2} are broadly similar, so in order to fix ideas we will refer only Theorem~\ref{Prop2} in this part of the summary.

We restrict to genus $0$ maps and construct a cartesian square
\begin{equation}\label{Eq:Costello}
\vcenter{\xymatrix@C=5em{
\oM^{\relorb}_{g=0}(X_r,D_r) \ar^<>(0.5){\Phi_X}[r]\ar_{\sigma^\rel}[d]
 & \oM^\orb_{g=0}(X_r)\ar[d]^{\sigma} \\
 \fM^{\rel}_{g=0}(\sA_r, \sD_r)' \ar^<>(0.5){\Phi_\sA}[r] & \fM^{\orb}_{g = 0}(\sA_r)'. }}
\end{equation}
$\mnotn{\fM^{\rel}_{g=0}(\sA_r,\sD_r)'}{special open subset of $\fM^{\rel}_{g=0}(\sA,\sD)$}$
$\mnotn{\fM^{\orb}_{g=0}(\sA_r)'}{special open subset of $\fM^{\orb}_{g=0}(\sA)$}$

Here $\Phi_X$ is the morphism denoted $\Phi$ previously.  We use $\notn{\sA}{moduli of line bundle with section}$ to stand for the stack $[\bbA^1/\GG_m]$.
  We may view $\sA$ as the moduli stack of pairs $(L, s)$ where $L$ is a line bundle and $s$ is a section of $L$.  The vanishing locus of $s$ is a divisor $\notn{\sD}{universal Cartier divisor}$, isomorphic as a stack to $\BGm$.  Note that the $r$-th root construction $(\sA_r, \sD_r)$ is abstractly isomorphic to $(\sA, \sD)$; we retain the subscript to emphasize that the map $\sA_r \rightarrow \sA$ is not the identity.\footnote{In fact, $\sA_r \rightarrow \sA$ is the \textit{universal} $r$-th root construction.}

We show below that the diagram has the following properties:
\begin{enumerate}
	\item the virtual fundamental classes can be defined via perfect relative obstruction theories relative to the vertical arrows, and
	\item the hypotheses of \cite[Theorem~5.0.1]{costello} are satisfied for these choices.
\end{enumerate}

The stacks in the bottom row of this diagram require definitions, which will be given explicitly in Sections~\ref{sec:universal} and ~\ref{sec:conclusion}.  For the moment, we note the following:

\begin{itemize}
\item
We define
$\notn{\fM(\sA)}{curves with line bundle and section}$ to be the stack of triples $(C,L,s)$, where $C$ is a twisted curve, $L$ a line bundle on $C$ and $s$ a section of $L$, such that the associated morphism $C \to \sA$ is representable. The stack $\fM_{g=0}(\sA)'$ is defined in Section~\ref{sec:orb-dense-open} as an open substack of $\fM_{g=0}(\sA)$.
\item
We define $\notn{\fM^\rel(\sA, \sD)}{curves with line bundle, section, and expansion}$ to be the stack of pre-stable maps to $\sA$, relative to the divisor $\sD$.  The stack $\fM^\rel_{g=0}(\sA, \sD)'$ is defined in Section~\ref{sec:conclusion} and is \'etale over $\fM^\rel_{g=0}(\sA, \sD)$.
\end{itemize}

We regard the morphism $\Phi_{\sA}$ as the \emph{universal example} of the arrow $\Phi_X$.  In Section~\ref{sec:universal} we will show that $\Phi_{\sA}$ is birational, which we regard as the universal case of Theorem~\ref{Prop2}.  This is the technical heart of the paper, and the place where the restriction to genus~$0$ and the requirements on $r$ are needed.

In Section~\ref{sec:obs} we show that the virtual fundamental classes of $\oM^{\rm rel}_{g=0}(X_r,D_r)$ and $\oM^{\rm orb}_{g=0}(X_r)$ may be defined via obstruction theories relative to $\fM^{\rm rel}_{g=0}(\sA_r,\sD_r)'$ and $\fM^{\rm orb}_{g=0}(\sA_r)'$, respectively, and in Section~\ref{sec:proof2} we show that these relative obstruction theories are compatible.  Combined with the universal case, this will suffice to verify the hypotheses of Costello's theorem~\cite[Theorem~5.0.1]{costello} for diagram~\eqref{Eq:Costello} and imply Theorem~\ref{Prop2}.

\section{The moduli spaces}
\label{sec:spaces}

\subsection{Smooth pairs}

A smooth pair is a pair $(X,D)$ where $X$ is a smooth algebraic stack and $D$ is a smooth divisor on $X$.  A morphism of smooth pairs $(X,D) \rightarrow (Y, E)$ is a morphism $f : X \rightarrow Y$ such that $f^{-1} E = D$.  There is a universal example of a smooth pair:  $(\sA, \sD)$ where $\sA = [ \bA^1 / \Gm ]$ and $\sD = [ \: 0 \: / \: \Gm] \subset \sA$.  If $(X,D)$ is another smooth pair then there is a unique morphism $f : X \rightarrow \sA$ such that $D = f^{-1}(\sD)$.  

We can interpret $\sA$ as the moduli space of pairs $(L,s)$ where $L$ is a line bundle and $s$ is a section of $L$.  As such there is, for each non-negative integer $r$, a map $[r] : \sA \rightarrow \sA$ sending $(L,s)$ to $(L^{\tensor r}, s^r)$.  We will sometimes write $\sA_r$ for the source of $[r]$ in order to emphasize the map to $\sA$.  We write $\sD_r$ for the universal divisor $\sD$ under this identification.  We therefore have a map of pairs $(\sA_r, \sD_r) \rightarrow (\sA, \sD)$.  We view $(\sA_r, \sD_r)$ as the $r$-th root of $(\sA, \sD)$; it is thus the \emph{universal $r$-th root construction}.

Given a smooth pair $(X,D)$ we can form the fiber product $X_r = X \fp_{\sA} \sA_r$ where the map $X \rightarrow \sA$ is the one associated to the divisor $D$.  We write $D_r$ for the pre-image of $\sD_r$ under the map $X \rightarrow \sA_r$.  Then $(X_r, D_r)$ is the \emph{root stack} of $X_r$ along $D_r$.

\subsection{Orbifold stable maps}

Let $\sX$ be a smooth algebraic stack.  Following \cite{AV}, we can define a moduli space $\fM^\orb(\sX)$ of orbifold pre-stable maps into $\sX$ whose $S$-points are diagrams
\begin{equation*} \xymatrix{
C \ar[r]^f \ar[d]_\pi & \sX \\
S
} \end{equation*}
where
\begin{enumerate}[label=(\roman{*})]
\item $C$ is an orbifold pre-stable curve over $S$,
\item $f$ is representable.
\end{enumerate}
When $\sX$ is a Deligne--Mumford stack, we also define $\oM(\sX)$ to be the open substack of $\fM(\sX)$ consisting of those diagrams that satisfy the stability condition:
\begin{enumerate}[resume*]
\item (stability) fiberwise over $S$, the automorphism group of $C$ over $\sX$ is finite.
\end{enumerate}

Orbifold pre-stable maps to the target $\sA$ will be of particular importance to us.  We may interpret $\fM(\sA)$ as the moduli space whose $S$-points are triples $(C, L, s)$ where $C$ is an orbifold pre-stable curve over $S$, $L$ is a line bundle on $C$, and $s$ is an element of $\Gamma(C, L)$.

\subsection{Relative stable maps}

In this paper, we will require relative stable maps to orbifold targets. This requires only a slight modification to J.\ Li's original definitions.  The definition relies on the notion of an expanded pair, for whose definition we refer the reader to \cite[Section~2.1]{ACFW}.

We write $\sT$ for the moduli space of expansions of the pair $(\sA, \sD)$, which is, by definition~\cite[Definition~2.1.6]{ACFW}, the moduli space of expansions of any pair $(X,D)$.  We write $(\sA^{\exp},\sD^{\exp})$ for the universal expansion of $\sA$ and $(X^{\exp}, D^{\exp})$ for the universal expansion of $(X,D)$.

Given an expansion $\tsA_r$ of $\sA_r$ over a base $S$ we may obtain an expansion of $\sA$ by passing to the relative coarse moduli space of the morphism $\tsA_r \rightarrow \sA \times S$.  This gives a morphism $\sT \rightarrow \sT$.  In order to emphasize that this morphism is not the identity, we employ the notation $\sT_r$ for its source.  Using this notation, we have a commutative diagram
\begin{equation*} \xymatrix{
\sA_r^{\exp} \ar[r] \ar[d] & \sA^{\exp} \ar[d] \\
\sT_r \ar[r] & \sT .
} \end{equation*}
Note:  this diagram is not cartesian!  See \cite[Section~7]{ACFW} for more about these untwisting morphisms.

Let $(X,D)$ be a smooth pair.  A pre-stable relative map to $(X,D)$ over $S$ consists of
\begin{enumerate}
\item an expansion $(\tX, \tD)$ of the pair $(X,D)$ over $S$,
\item a pre-stable orbifold curve $C$ over $S$, and
\item an $S$-morphism $f : C \rightarrow \tX$
\end{enumerate}
subject to the predeformability condition
\begin{enumerate}[resume*]
\item (predeformability) for any node of $C$ that maps to the singular locus of $\tX / S$ there are \'etale-local coordinates near the node in $C$, and smooth-local coordiantes near its image in $\tX$, such that $f$ has the following local form:
\begin{equation*} \xymatrix@R=0pt{
\mathcal{O}_S[x,y] / (xy - t) & \ar[l] \mathcal{O}_S[u,v] / (uv - t^r) \\
x^r & \ar@{|->}[l] u \\
y^r & \ar@{|->}[l] v
} \end{equation*}
\end{enumerate}
The stack of relative pre-stable maps to $(X,D)$ is denoted $\fM^{\rel}(X,D)$.  We say that a pre-stable relative map is stable if it satisfies the following condition:
\begin{enumerate}[resume*]
\item (stability) the automorphisms of $f : C \rightarrow \tX$ compatible with the projection to $X$ are finite, when viewed as a group scheme over $S$.
\end{enumerate}
Note that automorphisms of $f$ are commutative diagrams
\begin{equation*} \xymatrix{
C \ar[r] \ar[d] & \tX \ar[d] \\
C \ar[r]  & \tX
} \end{equation*}
where $C \rightarrow C$ is an automorphism of $C$ as an orbifold pre-stable curves and $\tX \rightarrow \tX$ is an automorphism of $\tX$ as an expansion of $(X,D)$.

We note that by forgetting the curve, we have a morphism of stacks $\fM^\rel(X,D) \rightarrow \sT$.  These fit into a commutative diagram:
\begin{equation*} \xymatrix{
\fM^\rel(X_r,D_r) \ar[r] \ar[d] & \fM^\rel(X,D) \ar[d] \\
\sT_r \ar[r] & \sT 
} \end{equation*}

\begin{theorem}
The stack $\oM^{\rel}(X_r, D_r)$ is proper and of Deligne--Mumford type.
\end{theorem}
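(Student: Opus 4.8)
The plan is to follow J.~Li's treatment of relative stable maps \cite{Li1,Li2}, which, as remarked above, requires only a slight modification once the target variety is replaced by a smooth proper Deligne--Mumford stack such as $X_r$: the expanded degenerations of the pair $(X_r,D_r)$ are supplied by \cite{ACFW}, and the orbifold source curves and their representable maps are governed by \cite{AV}. I would organize the proof into three parts: Deligne--Mumford type, local finiteness of presentation together with boundedness (the latter with the genus, the marked points and their contact orders, and the class $\beta$ all fixed), and the valuative criterion for properness.

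For the Deligne--Mumford property the point is finiteness of automorphism groups. An automorphism of a relative stable map $f\colon C\to\tX$ is a pair consisting of an automorphism of the twisted curve $C$ and an automorphism of the expansion $\tX$ over $X$, compatible via $f$. Automorphisms of $C$ over $\tX$ already form a group scheme of finite type, and automorphisms of an expansion over $X$ form an iterated extension of copies of $\Gm$ indexed by the levels lying over $D_r$; the stability condition is precisely the requirement that the resulting group of compatible pairs be finite, so automorphism groups are finite and reduced and the diagonal is unramified. Local finiteness of presentation is routine deformation theory, a relative prestable map being assembled from a twisted curve, a line bundle with section, and an expansion, each parametrized by a stack locally of finite presentation. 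For boundedness, once the discrete data are fixed, the number of levels of an admissible expansion is bounded (each level meets the image of $C$, and the intersection numbers with $D_r$ are controlled by $\beta\cdot D_r$), and twisted stable maps of bounded invariants into each of the finitely many such expansions form a bounded family by \cite{AV}; the predeformable locus is locally closed inside it.

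For properness I would check the valuative criterion over a discrete valuation ring $R$ with fraction field $K$, allowing finite base change. Uniqueness is the easier half: two extensions agreeing over $\spec K$ yield, after contracting the expansions and stabilizing, twisted stable maps to $X_r$ that agree over $\spec R$ by separatedness of $\oM^\orb(X_r)$ \cite{AV}, and then predeformability and stability reconstruct the twisted curve and the expansion over the closed point uniquely --- this is J.~Li's separatedness argument verbatim. Existence is where the real work lies, and I expect it to be the main obstacle: starting from a relative stable map over $\spec K$, one forgets the relative structure, contracts the expansion, and extends the resulting twisted stable map $C_K\to X_r$ over $\spec R$ (after a finite base change) using properness of $\oM^\orb(X_r)$ \cite{AV}; but the limit is in general neither predeformable nor supported on the correct expansion, since components of the special fibre may fall into $D_r$ and predeformability may fail at nodes over $D_r$. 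Repairing this calls for the semistable reduction procedure of \cite{Li2}: one expands the target along $D_r$ over the closed point --- an operation taking place within the stack of expansions of \cite{ACFW} --- correspondingly modifies the total space of the curve (blowing up, inserting rational bridges, and passing to a root of the base), pushes the offending components up into the new levels, and iterates; the bound on the number of levels forces termination, after which one stabilizes. The combinatorics here are insensitive to replacing a variety by a Deligne--Mumford stack, so the argument carries over and completes the proof.
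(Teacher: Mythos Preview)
Your sketch is a reasonable outline of how one would actually prove this result, and the ingredients you name---finiteness of automorphisms from the stability condition, boundedness via a bound on levels together with \cite{AV}, and the valuative criterion via Li's semistable reduction transported to the orbifold setting using the expansion formalism of \cite{ACFW}---are the right ones.  The paper, however, gives no proof of its own: it simply cites \cite[Theorem~2.2.1]{AF2} (Abramovich--Fantechi) and moves on.  So there is nothing to compare at the level of argument; your write-up is essentially a summary of what the cited reference does, whereas the paper defers entirely to that reference.

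One small comment on your sketch: in the properness argument you invoke properness of $\oM^{\orb}(X_r)$ to extend the contracted map, but the contraction lands in $X_r$ only after passing to the relative coarse moduli space of the expansion, and the resulting map need not be stable; one really works with prestable maps and uses the properness of the Kontsevich space after stabilization, then reconstructs the twisted structure.  This is a routine wrinkle and is handled in \cite{AF2}, but it is worth being precise about which properness statement you are appealing to at that step.
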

\begin{proof}
See \cite[Theorem~2.2.1]{AF2}.
\end{proof}

\section{The universal case}
\label{sec:universal}

Here we treat analogues of Theorems~\ref{Prop1} and~\ref{Prop2} where $(X,D)$ is replaced by $(\sA, \sD)$.  Strictly speaking, the results we prove here are not special cases of Theorems~\ref{Prop1} and~\ref{Prop2} as we work with pre-stable maps here instead of stable maps.  The results proved in this section are the essential input in our later application of Costello's theorem.

\subsection{Relative maps}

We will say that an object $f : C \rightarrow \tsA$ of $\fM^{\rel}(\sA, \sD)$ is \textit{totally non-degenerate} if the following conditions hold:
\begin{enumerate}
\item the target is unexpanded (meaning the contraction $\tsA \rightarrow \sA$ is an isomorphism), so that the object lies in the open substck $\fM(\sA)$,
\item $C$ is smooth, and
\item $f(C)$ is not contained in $\sD$.
\end{enumerate}
Remarking that $\fM^{\rel}(\sA_r, \sD_r)$ is abstractly isomorphic to $\fM^{\rel}(\sA, \sD)$ we also speak of totally nondegenerate objects of $\fM^{\rel}(\sA_r, \sD_r)$.

The following is the universal analogue of Theorem~\ref{Prop1}:

\begin{theorem} \label{thm:rel-birational}
The map $\fM^\rel(\sA_r, \sD_r) \rightarrow \fM^\rel(\sA, \sD)$ is birational.
\end{theorem}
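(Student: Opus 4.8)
## Proof proposal

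The plan is to show that the stabilization morphism $\Psi : \fM^\rel(\sA_r,\sD_r) \to \fM^\rel(\sA,\sD)$ restricts to an isomorphism over a dense open substack on each side, namely the locus of totally non-degenerate objects, and then to verify that these loci are indeed dense. Since $\fM^\rel(\sA_r,\sD_r)$ is abstractly isomorphic to $\fM^\rel(\sA,\sD)$, this last point only needs to be checked once, on $\fM^\rel(\sA,\sD)$.

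\textbf{Step 1: $\Psi$ is an isomorphism on totally non-degenerate loci.} Let $\fM(\sA)^\circ \subset \fM(\sA)$ denote the open substack where $C$ is smooth and $f(C) \not\subset \sD$, and similarly $\fM(\sA_r)^\circ$. A totally non-degenerate relative map to $(\sA,\sD)$ has unexpanded target, so it \emph{is} an object of $\fM(\sA)^\circ$, i.e.\ a triple $(C,L,s)$ with $C$ smooth and $s$ not identically zero; likewise for $(\sA_r,\sD_r)$. First I would check that $\Psi$ carries totally non-degenerate objects of $\fM^\rel(\sA_r,\sD_r)$ to totally non-degenerate objects of $\fM^\rel(\sA,\sD)$: the stabilization contracts nothing new here because the target is already unexpanded and $C$ is already smooth, and representability together with $f(C)\not\subset\sD$ is preserved under the coarsening $\sA_r \to \sA$ on the divisor. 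So $\Psi$ induces a morphism $\fM(\sA_r)^\circ \to \fM(\sA)^\circ$. To see this is an isomorphism, I would exhibit the inverse directly: given $(C,L,s)$ with $C$ smooth and $s \neq 0$, the divisor $Z = V(s) \subset C$ is a Cartier divisor supported away from the generic point, and the data of a relative map to $(\sA_r,\sD_r)$ lifting it amounts to a factorization of $C \to \sA$ through $\sA_r \to \sA$, together with the predeformability condition at the points of $Z$. Over the smooth locus of $C$ away from $Z$ the lift is unique (the map misses $\sD$, and $\sA_r \to \sA$ is an isomorphism away from $\sD$); near a point of $Z$ the lift exists and is unique because one must take the root stack of $C$ along $Z$ — taking the $r$-th root of the pair $(C,Z)$ is forced by predeformability, and a line bundle $M$ with $M^{\tensor r} \cong L$ and a compatible section is uniquely determined on the root stack by the tautological construction. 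Thus the functor $(C,L,s) \mapsto (\sqrt[r]{C,Z}, M, s^{1/r})$ is a two-sided inverse, and $\Psi$ is an isomorphism over the totally non-degenerate loci.

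\textbf{Step 2: the totally non-degenerate locus is dense.} I would argue that $\fM(\sA)^\circ$ is dense in $\fM^\rel(\sA,\sD)$ by a deformation argument: every relative pre-stable map to $(\sA,\sD)$ can be deformed to one with smooth source, unexpanded target, and image not contained in $\sD$. Concretely, $\fM^\rel(\sA,\sD)$ is smooth (being a moduli of maps to the smooth Artin stack $\sA$ with its standard divisor — indeed it is the universal such moduli and one can identify its local structure explicitly, as in \cite{ACFW}), and on a smooth stack density of an open substack follows once one knows it is nonempty in every component and the complement is of positive codimension. The complement of $\fM(\sA)^\circ$ is the union of the boundary (nodal $C$ or expanded target) and the locus $f(C) \subset \sD$ (equivalently $s \equiv 0$); each of these is a proper closed condition cut out inside each component, so it suffices to note that every component of $\fM^\rel(\sA,\sD)$ contains a totally non-degenerate point. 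This in turn follows from the explicit description of the components by discrete data (genus, marked points with contact orders $k_i$, twisting of $C$): given such data one writes down a smooth twisted curve $C$ with the prescribed orbifold structure, a line bundle $L$ of the appropriate degree, and a section $s$ vanishing to the prescribed orders at the marked points and nowhere else, giving a totally non-degenerate object in that component.

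\textbf{Main obstacle.} The delicate point is Step 2 — controlling the components of $\fM^\rel(\sA,\sD)$ and checking that the boundary (nodal curves, expanded targets) together with the locus $\{s \equiv 0\}$ is nowhere dense, rather than, say, forming an entire component. One has to rule out components consisting entirely of maps whose source is rigidly nodal or whose image is forced into $\sD$; this is where the precise moduli-theoretic description of $\fM^\rel(\sA,\sD)$ from \cite{ACFW}, and the smoothness of $\sA$, do the real work. Step 1, by contrast, is essentially the statement that over the non-degenerate locus the whole relative--orbifold apparatus degenerates to the plain root-stack construction $\sqrt[r]{C,V(s)}$, which is a clean and essentially formal computation.
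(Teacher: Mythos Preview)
Your overall strategy matches the paper's: reduce birationality to (a) $\Psi$ being an isomorphism on the totally non-degenerate loci and (b) density of those loci, then use the abstract isomorphism $(\sA_r,\sD_r)\simeq(\sA,\sD)$ to check (b) only once. Your Step~1 is considerably more detailed than the paper's (which simply calls it immediate), and the root-stack description of the inverse is the right picture.

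The genuine divergence is in Step~2. The paper proves density by an \emph{explicit local deformation} (Lemma~\ref{lem:rel-dense-open}): given $C\to\tsA$, it argues by induction on the number of nodes of $\tsA$, retracts $\tsA$ onto a two-component expansion $\sA\amalg_{\sD}\sA$, and then, working in \'etale-local coordinates near the preimage of the node, writes down by hand a one-parameter smoothing over $\Spec\CC[t]$ with the correct predeformable local form (taking the smoothing parameter exponent to be the $\operatorname{lcm}$ of the contact orders). This is self-contained and needs no knowledge of the global or component structure of $\fM^{\rel}(\sA,\sD)$.

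Your Step~2 instead appeals to smoothness of $\fM^{\rel}(\sA,\sD)$ together with an ``explicit description of the components by discrete data'' from \cite{ACFW}, and then exhibits a non-degenerate point with the given discrete data. The trouble is a circularity: knowing that the connected components are indexed precisely by $(g,n,(k_i),\text{twisting})$---so that the non-degenerate point you construct actually lands in the \emph{given} component rather than a different one with the same invariants---is essentially equivalent to the density statement you are trying to prove. Smoothness alone does not rule out a component sitting entirely in the boundary; that is exactly what the paper's explicit smoothing establishes. And \cite{ACFW} describes the stack of expansions $\sT$ and its local charts, not the component decomposition of $\fM^{\rel}(\sA,\sD)$. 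So as written your Step~2 has a gap; closing it without circularity would either require an independent source for the component classification or would lead you back to something like the paper's hands-on smoothing.
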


It is immediate that this map induces an isomorphism between the loci of totally non-degenerate objects.  It therefore suffices to show that these loci are dense.  Since the pairs $(\sA_r, \sD_r)$ and $(\sA, \sD)$ are abstractly isomorphic, the following lemma implies the theorem.

\begin{lemma} \label{lem:rel-dense-open}
The totally non-degenerate objects in $\fM^{\rel}(\sA,\sD)$ are dense.
\end{lemma}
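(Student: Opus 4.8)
The plan is to show that any object of $\fM^{\rel}(\sA,\sD)$ can be deformed, within a single connected family, to a totally non-degenerate one, and that such objects form an open substack; openness of each of the three defining conditions is clear (unexpandedness is open on $\sT$, smoothness of $C$ is open, and the condition $f(C)\not\subset\sD$ is open since $\sD$ is closed), so the content is density. Recall that an $S$-point of $\fM^{\rel}(\sA,\sD)$ is an expansion $\tsA$ of $(\sA,\sD)$, a pre-stable orbifold curve $C$, and a map $f:C\to\tsA$ satisfying predeformability; equivalently, over the unexpanded locus, it is a triple $(C,L,s)$ with $L$ a line bundle on $C$ and $s\in\Gamma(C,L)$. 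I would first reduce to the unexpanded case: given an object with target $\tsA$, the components of $\tsA$ over the singular locus are projective bundles over $\sD\cong\BGm$, and one can contract the expansion by a standard deformation (moving the smoothing parameters $t$ of the expanded pair away from $0$), simultaneously smoothing the predeformable nodes of $C$ lying over the singular locus of $\tsA/S$. This lands us at an object of $\fM(\sA)$, i.e.\ a triple $(C,L,s)$, possibly with $C$ still nodal.

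Next I would smooth the remaining nodes of $C$. A node of $C$ not lying over the singular locus of the target imposes no predeformability constraint, so deforming $C$ to a nearby smooth curve is unobstructed as a deformation of the abstract curve; the line bundle $L$ and section $s$ extend after possibly base-changing, since $\fM(\sA)\to\fM_{g,n}$ (the stack of pre-stable curves) is smooth — the fiber is the Picard stack of $C$ together with the affine space of sections of a given line bundle, and higher cohomology obstructions to deforming $(L,s)$ vanish because we are free to twist $L$ and because $H^1$ of a line bundle of large degree on a curve vanishes. Concretely, near any point of $\fM(\sA)$ one can choose a versal deformation in which the curve is smoothed and $(L,s)$ is carried along. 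After these two steps we may assume $C$ is smooth and the target is unexpanded.

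Finally I would arrange $f(C)\not\subset\sD$, i.e.\ $s\not\equiv 0$. If $s\equiv 0$ on (a component of) the smooth curve $C$, replace $s$ by a deformation: since $C$ is now smooth, $H^1(C,L)$ can be killed by twisting $L$ by an effective divisor — or more simply, one deforms the pair $(L,s)$ within $\fM(\sA)$ to $(L,s')$ with $s'$ a generic section, which is nonzero as long as $L$ admits any nonzero section, and one can always increase the degree of $L$ (equivalently, move within the moduli) so that it does. The main obstacle I anticipate is the bookkeeping with orbifold (twisted) curves $C$: deforming a twisted nodal curve to a smooth one, and checking that line bundles and sections deform along with it in the presence of the cyclotomic stabilizers at nodes and markings, requires a little care — but this is exactly the smoothness of $\fM(\sA)$ over the stack of twisted pre-stable curves, which is standard (e.g.\ from \cite{AV}), and the orbifold structure at markings is locally constant in families so poses no obstruction to density. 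Assembling the three deformations gives a connected chain from an arbitrary object to a totally non-degenerate one, proving that the totally non-degenerate locus is dense, hence (being open) that it meets every component, which is the assertion of the lemma.
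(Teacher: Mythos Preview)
Your approach differs from the paper's. The paper argues by induction on the number of nodes of the expanded target $\tsA$: it reduces to the case where $\tsA$ has two components, then shows that the unique node of $\tsA$ can be smoothed while carrying the predeformable map $f:C\to\tsA$ along, via an explicit local-coordinate construction near the nodes of $C$ lying over that node (using the predeformable local form $u\mapsto x_i^{r_i}$, $v\mapsto y_i^{r_i}$ and deforming over $\CC[t]$ by $x_iy_i=t^{r/r_i}$, $uv=t^r$, with $r=\operatorname{lcm}\{r_i\}$). Your Step~1 gestures at this but elides the actual content --- namely, that the predeformable map extends across the smoothing of the target --- which is precisely what the paper's construction establishes. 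The paper does not separately carry out your Steps~2 and~3.

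There is a genuine gap in your Steps~2 and~3. The morphism $\fM(\sA)\to\fM$ is \emph{not} smooth: the obstruction to extending $s\in H^0(C,L)$ across a deformation of $(C,L)$ lies in $H^1(C,L)$, which need not vanish, and the fiber over a fixed $C$ is not an affine bundle since $h^0(C,L)$ jumps. Your proposed remedy (``we are free to twist $L$'', ``one can always increase the degree of $L$'') is not a deformation within $\fM(\sA)$ at all, because $\deg L$ is locally constant in families. Indeed, on the component where $C\cong\bP^1$ and $\deg L=-1$, the section is forced to vanish identically and there are no totally non-degenerate objects whatsoever. So either $\fM^{\rel}(\sA,\sD)$ is implicitly carrying the standard non-degeneracy hypothesis from Li's theory (that $f^{-1}$ of the distinguished divisor be a Cartier divisor on $C$, so no component of $C$ maps into $\sD$), which would make your Step~3 vacuous and Step~2 tractable, or the lemma is intended only on the components relevant to the application. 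In either case the smoothness claim and the degree-changing maneuver cannot stand as written.
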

\begin{proof}
Let $C \rightarrow \tsA$ be a relative stable map to an expansion of $\sA$.  By induction, it will be sufficient to show that one of the nodes of $\tsA$ can be smoothed.  Let $\tsA' = \sA \amalg_{\sD} \sA$ where the two copies of $\sA$ are joined together along the automorphism of $\sD$ sending a line bundle to its dual.  We can find a map $\tsA \rightarrow \tsA'$ that is an isomorphism near any given node of $\tsA$ and collapses every other point of $\tsA$ to one or the other point of $\tsA'$.  Taking advantage of the section $\tsA' \rightarrow \tsA$, which has open image, it is easy to extend a deformation of $\tsA'$ to $\tsA$.  We can replace $\tsA$ with $\tsA'$ for the rest of the proof and assume that $\tsA$ has just two irreducible components.

It is sufficient to produce the desired deformation in an \'etale neighborhood of the pre-image of the node of $\tsA$.  Indeed, once an infinitesimal deformation is found in a neighborhood of the pre-image of the node, one only needs to observe that local deformations of curves can always be glued, and away from the nodes in the pre-image of the node of $\tsA$, we are merely gluing together maps to a point, which is a trivial matter.

Now, working \'etale locally we can assume that the pre-image of the node of $\tsA$ is a disjoint union of copies of $U_i = \Spec \CC[x_i,y_i] / (x_i y_i)$; we can assume moreover that each of the maps $U_i \rightarrow \tsA$ factors through the smooth cover $\Spec \CC[u_i,v_i] / (u_i v_i) \rightarrow \tsA$ in the form $u_i \mapsto x_i^{r_i}$, $v_i \mapsto y_i^{r_i}$.  Now let $r = \lcm \set{r_i}$.  Over the base $\CC[t]$ we now have the maps
\begin{equation*} 
\Spec \CC[x_i,y_i,t] / (x_i y_i = t^{r/ r_i}) \rightarrow  \Spec \CC[u,v] / (uv = t^r)
\end{equation*}
extending the given ones.  This clearly gives a smoothing.
\end{proof}

\subsection{Orbifold maps}
\label{sec:orb-dense-open}

Theorem \ref{thm:orb-birational}, below, is the birationality statement for the map $\fM^{\rel}(\sA,\sD) \rightarrow \fM^{\orb}(\sA)$, and thus also of the map  $\fM^{\rel}(\sA_r,\sD_r) \rightarrow \fM^{\orb}(\sA_r)$.  It is more subtle than the one considered in the last section.  In fact, we will only obtain the required statement in genus~$0$ and for a non-dense open substack of $\fM^{\orb}_{g=0}(\sA)$.  This will still suffice for our eventual purposes, since the open substack in question will contain the image of the map $\oM^{\orb}_{g=0}(X_r)$, at least when $r$ is sufficiently large and divisible.  These issues explain the restrictions in Theorem~\ref{maintheorem}.

Let $\fM^\orb_{g=0}(\sA)'$ be the open substack of $\fM^\orb_{g=0}(\sA)$ parameterizing triples $(C,L,s)$ such that
\begin{enumerate}
\item \label{cond:1} $\deg L = \sum_x \age_x(L)$, the sum taken over the smooth points of $C$, and
\item \label{cond:2} for each proper subcurve $D \subset C$, we have $-\frac{1}{2} < \deg(L \rest{D}) < \frac{1}{2}$.
\end{enumerate}
Let $\fM^{\rel}_{g=0}(\sA, \sD)''$ be the pre-image of $\fM^{\orb}_{g=0}(\sA)'$ under the projection $\fM^{\rel}(\sA, \sD) \rightarrow \fM^{\orb}(\sA)$.\footnote{The stack $\fM^{\rel}_{g=0}(\sA,\sD)''$ is very close to, but not exactly the same as, the stack $\fM^{\rel}_{g=0}(\sA,\sD)'$ appearing in diagram~\ref{Eq:Costello}.  A small adjustment will be required in Section~\ref{sec:proof2}.}

\begin{remark}
Note that if $(C, L, s)$ is a point of $\fM^{\orb}_{g=0}(\sA)'$ where $C$ is an ordinary curve (i.e., has no orbifold points) then Conditions~\ref{cond:1} and~\ref{cond:2} together with $g=0$ imply that $L$ is trivial, rendering the locus of non-orbifold curves in $\fM^{\orb}_{g=0}(\sA)'$ entirely uninteresting.
\end{remark}

The universal analogue of Theorem~\ref{Prop2} is the following:
\begin{theorem} \label{thm:orb-birational}
The map $\Psi : \fM^{\rel}_{g=0}(\sA, \sD)'' \rightarrow \fM^{\orb}_{g=0}(\sA)'$ is birational.
\end{theorem}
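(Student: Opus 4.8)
The plan is to imitate the proof of Theorem~\ref{thm:rel-birational}: exhibit an open substack of $\fM^{\orb}_{g=0}(\sA)'$ over which $\Psi$ is an isomorphism, and then reduce the theorem to the assertion that this substack is dense.

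For the first step I would take $U\subseteq\fM^{\orb}_{g=0}(\sA)'$ to be the open locus of triples $(C,L,s)$ for which $C$ is irreducible and $s$ is not identically zero (both complementary conditions are closed). Such a triple is precisely a representable map from an irreducible genus-$0$ twisted curve to $\sA$ whose image is not contained in $\sD$, i.e.\ a totally non-degenerate object of $\fM^{\rel}(\sA,\sD)$ in the sense of Section~\ref{sec:universal}; any such object automatically lies in $\fM^{\rel}_{g=0}(\sA,\sD)''$, since by definition that stack is the preimage of $\fM^{\orb}_{g=0}(\sA)'$. Conversely a totally non-degenerate object of $\fM^{\rel}_{g=0}(\sA,\sD)''$ has smooth, hence (as $g=0$) irreducible, source curve, and so projects into $U$. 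Thus $\Psi$ restricts to an isomorphism from the totally non-degenerate locus of $\fM^{\rel}_{g=0}(\sA,\sD)''$ onto $U$. By Lemma~\ref{lem:rel-dense-open} that locus is dense in $\fM^{\rel}(\sA,\sD)$, hence in its open substack $\fM^{\rel}_{g=0}(\sA,\sD)''$, so it suffices to prove that $U$ is dense in $\fM^{\orb}_{g=0}(\sA)'$.

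To prove that density, note first that Condition~(\ref{cond:1}) forces $H^0(C,L)\neq 0$ whenever $C$ is irreducible (for then the pushforward of $L$ to the coarse curve has degree zero), so every object of $\fM^{\orb}_{g=0}(\sA)'$ with irreducible $C$ lies in $\overline{U}$; it remains to treat reducible $C$, which I would do by induction on the number of irreducible components. It is enough to show that an object $(C,L,s)$ of $\fM^{\orb}_{g=0}(\sA)'$ with $C$ reducible admits a deformation, staying inside $\fM^{\orb}_{g=0}(\sA)'$, whose general fibre has strictly fewer components. Since $g=0$ the dual graph of $C$ is a tree and every node is separating; fix a node $x$, of cyclic order $\ell$, and write $C=C'\cup_x C''$. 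The twisted curve $C$ can always be smoothed along $x$ --- locally the model $[\operatorname{Spec}\CC[u,v]/(uv)\,/\,\mu_\ell]$ deforms to $[\operatorname{Spec}\CC[u,v,t]/(uv-t)\,/\,\mu_\ell]$, whose general fibre is a scheme near $x$ --- and over such a smoothing $L$ extends because the Picard stack is smooth over the stack of twisted curves, while $s$ extends because Condition~(\ref{cond:2}) forces $\deg(L\rest{E})>-1$ on every component $E$ and so kills the obstructing $H^1$. The delicate point is to choose the extension so that the deformed triple still satisfies Conditions~(\ref{cond:1}) and~(\ref{cond:2}): Condition~(\ref{cond:1}) demands that the degree of the extended bundle drop by exactly the total age $\age_x(L\rest{C'})+\age_x(L\rest{C''})$ lost at $x$, which when the character of $L$ at $x$ is nontrivial forces a compensating twist by $\cO(-\Sigma)$ along a section through $x$ (legitimate, since then $s$ already vanishes at $x$, the fibre of $L$ over the gerbe at $x$ having no invariants), while Condition~(\ref{cond:2}) must be re-checked on the finitely many subcurves of the general deformed fibre.

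I expect the main obstacle to be exactly this last verification: showing that among the nodes of a reducible $C$ obeying~(\ref{cond:1}) and~(\ref{cond:2}) there is always one whose smoothing does not push the degree of $L$ on some subcurve out of $(-\tfrac12,\tfrac12)$. It is here that the genus-$0$ tree structure is essential --- one peels off an outermost component and controls the redistribution of degree along the tree --- and it is precisely this step that has no counterpart in higher genus (compare Section~\ref{sect:genus1}) and that forces us to work over the non-dense open substack $\fM^{\orb}_{g=0}(\sA)'$ rather than all of $\fM^{\orb}_{g=0}(\sA)$.
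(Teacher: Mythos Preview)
Your reduction to the density of the totally non\-degenerate locus in $\fM^{\orb}_{g=0}(\sA)'$ is exactly the paper's strategy, and the filtration by number of nodes matches Lemma~\ref{lem:Un-dense-in-next}.  However, two of your key steps do not work as stated.  First, the worry about Condition~\ref{cond:1} is misplaced: the right-hand side $\sum_{x\in C^{\rm sm}}\age_x(L)$ involves only the \emph{marked} (smooth) points, not the nodes, and both it and $\deg L$ are locally constant in families; smoothing a node changes neither, so no ``compensating twist'' is needed.  Second, and more seriously, the claim that Condition~\ref{cond:2} forces $H^1(C,L)=0$ is false for twisted curves.  What controls $H^1(E,L\rest{E})$ is the degree of $\pi_\ast(L\rest{E})$ on the coarse $\bar E\simeq\PP^1$, namely $\deg(L\rest{E})-\sum_x\age_x(L\rest{E})$, and by Lemma~\ref{lem:structure-of-Ls}~\ref{lem:structure-of-Ls:2} this equals $-\bigl|E\cap C^{\rm sing}\bigr|$ when $s\rest{E}=0$ but $s$ is nonzero on the neighbours.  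As soon as $E$ carries $\geq 2$ nodes, $H^1(E,L\rest{E})\neq 0$, and (since the characters at those nodes are nontrivial, so the skyscrapers in the normalisation sequence have no invariants) this survives to $H^1(C,L)\neq 0$.  Hence your extension argument for $s$ over an arbitrary smoothing breaks down precisely in the cases that matter.

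This is why the paper does not smooth an arbitrary node and appeal to unobstructedness.  Instead it first isolates, via Lemma~\ref{lem:vanishing-at-node}, a node where $s$ vanishes on both branches (Step~1: smooth it and extend $s$ by zero), and when no such node remains, it takes an entire component $E$ with $s\rest{E}=0$ and smooths \emph{all} nodes on $E$ simultaneously with the specific local models $x^{1/n}y^{1/n}=t^{1/na}$ (Step~3).  The extension $(L',s')$ is then written down by hand as an explicit Cartier divisor, and Lemma~\ref{lem:structure-of-Ls} is used to verify $L'\rest{C}\simeq L$ component by component.  Finally, note that your concern about re-verifying Condition~\ref{cond:2} on the general fibre is unnecessary: any proper subcurve of the general fibre specialises to a proper subcurve of $C$ with the same degree, so Conditions~\ref{cond:1} and~\ref{cond:2} are open and follow automatically once the central fibre is correct.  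The genuine difficulty is constructing the deformation with the right central fibre, and that is where your sketch needs the explicit constructions of \ref{step:1}--\ref{step:2}.
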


As before, we will show that the totally non-degenerate objects are dense on source and target, where an object $(C,L,s)$ of $\fM^{\orb}(\sA)$ is called totally non-degenerate if $C$ is smooth and $s$ does not vanish identically.  It is immediate that $\Psi$ restricts to an isomorphism on the totally nondegenerate objects, and we have already seen in Lemma~\ref{lem:rel-dense-open} that totally non-degenerate objects are dense in $\fM^{\rel}_{g=0}(\sA, \sD)''$, so the theorem reduces to the following lemma:

\begin{lemma} \label{lem:non-degen-dense-in-M-orb}
The totally non-degenerate objects in $\fM^{\orb}_{g = 0}(\sA)'$ are dense.
\end{lemma}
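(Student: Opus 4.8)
The plan is to show that any point $(C,L,s)$ of $\fM^{\orb}_{g=0}(\sA)'$ can be deformed, in finitely many steps, to a totally non-degenerate one, so that the totally non-degenerate locus is dense. There are two kinds of degeneracy to remove: the curve $C$ may be nodal, and the section $s$ may vanish identically. I would treat them in the following order: first deform away the case where $s$ vanishes identically on some components, and then, on the locus where $s$ is not identically zero on any component, smooth the nodes of $C$.

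First I would analyze the combinatorial constraints imposed by the two conditions defining $\fM^{\orb}_{g=0}(\sA)'$. Since $g=0$, the dual graph of $C$ is a tree; Condition~\ref{cond:1} says $\deg L=\sum_x\age_x(L)$ over the smooth marked points, and Condition~\ref{cond:2} bounds the degree of $L$ on every proper subcurve strictly between $-\tfrac12$ and $\tfrac12$. Summing the restrictions of $\deg L$ over components and comparing with the ages forces strong rigidity: on any component meeting the rest of $C$ in a single node the degree of $L$ is nearly determined, and the slack available is exactly the room one needs to perform a deformation. I expect that these two conditions, together with the tree structure, imply that any irreducible component on which $s$ vanishes identically is either the whole curve (in which case $L$ is trivial, by the Remark, and there is nothing to do since a nonzero section exists in the deformation) or is a ``tail'' whose attaching data can be modified. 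The key point is that deforming $s$ to be nonzero on such a component, or contracting/smoothing that component, stays inside $\fM^{\orb}_{g=0}(\sA)'$ because the degree estimate~\ref{cond:2} has strict inequalities, leaving an open neighborhood of allowable configurations.

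Next, on the locus where $s$ does not vanish identically on any component, I would smooth the nodes of $C$ one at a time, mirroring the argument of Lemma~\ref{lem:rel-dense-open}. Working étale-locally near a node of $C$, the curve looks like $\Spec\CC[x,y]/(xy)$ with possible orbifold structure, and $L$ near the node is a pair of line bundles on the two branches glued at the node with some age datum; smoothing the node to $\Spec\CC[x,y,t]/(xy-t)$ and extending $L$ and $s$ requires only that the total degree on the component being absorbed stays within the bound~\ref{cond:2}, which holds in a deformation precisely because the bound is a strict open condition. The gluing of local deformations of the curve and of $(L,s)$ away from the node is routine, exactly as in the proof of Lemma~\ref{lem:rel-dense-open}: away from the node one is patching together deformations of line bundles on smooth curves, which is unobstructed.

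The main obstacle, and the place where $g=0$ and the passage to the \emph{non-dense} open substack $\fM^{\orb}_{g=0}(\sA)'$ are genuinely needed, is controlling the line bundle $L$ under the deformation. On a higher-genus curve, or without Condition~\ref{cond:2}, smoothing a node can change $\deg L$ on the vanishing cycle and there is no reason the limit of $L$ under a smoothing should again admit a section with the prescribed contact data, nor that it remains representable as a map to $\sA$; the Picard group is too large, which is precisely the phenomenon flagged in the genus~$1$ counterexample of Section~\ref{sect:genus1}. The estimate $-\tfrac12<\deg(L\rest{D})<\tfrac12$ is exactly what is needed: it forces $\deg L$ to be nearly an integer determined by the ages and therefore leaves no room for the bundle to ``jump,'' so that the smoothing deformation of $(C,L,s)$ exists and stays in $\fM^{\orb}_{g=0}(\sA)'$. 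Verifying this rigidity — that Conditions~\ref{cond:1} and~\ref{cond:2} pin down $L$ tightly enough on each component that the local smoothing deformation of the triple integrates — is the technical heart of the lemma.
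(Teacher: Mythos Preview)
Your overall shape --- reduce the degeneracy of $(C,L,s)$ step by step until it is totally non-degenerate --- is the paper's as well, but two of your specific claims are wrong and the technical core is missing.

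First, the assertion that a component $E$ on which $s$ vanishes identically must be a ``tail'' is false: such $E$ can meet the rest of $C$ in arbitrarily many nodes. What Conditions~\ref{cond:1} and~\ref{cond:2} actually buy you (proved in the paper as Lemma~\ref{lem:structure-of-Ls}) is an \emph{exact formula} for $L\rest{E}$: when $s$ does not vanish on any neighbor of $E$ one has
\[
L\rest{E}\ \simeq\ \cO_E\Bigl(\sum_{x\in E} r_x\,\age_x(L\rest{E})\,x\Bigr)\otimes\cO_E\bigl(-\lvert E\cap C^{\rm sing}\rvert\bigr).
\]
In particular $\deg L\rest{E}$ may well be negative, so $L\rest{E}$ may have no nonzero sections at all. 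Your proposed move ``deform $s$ to be nonzero on such a component'' while keeping $C$ fixed is therefore not available. One \emph{must} smooth nodes to handle these components; the paper does this by smoothing \emph{all} nodes lying on $E$ simultaneously, choosing at each node $\xi$ (with orbifold index $n$ and age $a=\age_\xi(L)$ on the neighboring branch) the specific local model $x^{1/n}y^{1/n}=t^{1/na}$. Only with these local models does $E$ extend to a Cartier divisor $E'$ on the family, allowing one to set $L'=\cO\bigl(E'+\sum r_x\age_x(L)\,x\bigr)$, $s'=1$, and then verify componentwise (this is where $g=0$ enters) that $(L',s')\rest{C}\simeq(L,s)$ using the structure formula above.

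Second, for the step where $s$ does not vanish identically on any component, the argument is not the \'etale-local node-smoothing of Lemma~\ref{lem:rel-dense-open} (that lemma smooths the \emph{target} expansion, a different mechanism). The key point you are missing is Lemma~\ref{lem:vanishing-at-node}: under the degree bound~\ref{cond:2}, if $s$ vanishes at a node then $s$ vanishes identically on at least one adjacent branch. Hence once $s$ is nonzero on every component it is automatically nonzero at every node, forcing the global identification $(L,s)\simeq\bigl(\cO_C(\sum r_x\age_x(L)\,x),\,1\bigr)$, which then extends to any smoothing by the same formula. The extension of $(L,s)$ here is not a local patching problem at all; it comes for free from this global description.

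In short: your outline points at the right filtration, but the two structural lemmas (\ref{lem:vanishing-at-node} and \ref{lem:structure-of-Ls}) are the actual engine, and the explicit Cartier-divisor construction for the smoothing of a component where $s$ vanishes is the step that does not follow from general deformation-theoretic considerations.
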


\subsubsection{Proof of Lemma~\ref{lem:non-degen-dense-in-M-orb}}

Our strategy is to filter $\fM^{\orb}_{g=0}(\sA)'$ by open subsets and show that each is dense in the next.  Consider
\begin{equation*}
U_0 \subset U_1 \subset \cdots \subset \fM^{\orb}_{g=0}(\sA)'
\end{equation*}
where $U_n$ is the locus of triples $(C,L,s)$ where $C$ has at most $n$ nodes.  Thus $U_0$ is the locus of $(C,L,s)$ with $C$ smooth.  We have $\bigcup U_n = \fM^{\orb}_{g=0}(\sA)'$, so the proof of Lemma~\ref{lem:non-degen-dense-in-M-orb} reduces to the verification of the following two lemmas:

\begin{lemma}\label{lem:non-degen-dense-in-smooth}
The totally nondegenerate triples $(C,L,s)$ are dense in $U_0$.
\end{lemma}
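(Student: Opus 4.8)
The plan is to identify, inside $U_0$, the totally nondegenerate locus as the complement of the closed substack on which $s$ vanishes identically, and to show that every point of this closed substack lies in the closure of the totally nondegenerate locus. Since every point of $U_0$ has $C$ smooth — hence irreducible of genus $0$, so that Condition~\ref{cond:2} is vacuous — a point of $U_0$ is totally nondegenerate precisely when $s\not\equiv 0$. It therefore suffices to fix a point $(C,L,0)$ of $U_0$ and exhibit a nonzero section $\sigma\in\Gamma(C,L)$ such that $(C,L,\lambda\sigma)$ is again a point of $U_0$ for every $\lambda\in\bA^1$; the induced morphism $\bA^1\to U_0$ then carries $\bA^1\setminus\{0\}$ into the totally nondegenerate locus and $0$ to $(C,L,0)$, exhibiting $(C,L,0)$ as a limit of totally nondegenerate objects. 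Note that one never deforms the curve or the line bundle, only the section.

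To produce $\sigma$ I would invoke Riemann--Roch on the twisted curve $C$. Writing $\pi\colon C\to\overline C$ for the coarse space and using that $R^{i}\pi_*L=0$ for $i>0$, that $\pi_*L$ is a line bundle on $\overline C$ with $\deg\pi_*L=\deg L-\sum_x\age_x(L)$, and that $\overline C$ has genus $0$, one gets $\chi(C,L)=\deg L-\sum_x\age_x(L)+1$. Condition~\ref{cond:1} is thus precisely the assertion that $\chi(C,L)=1$, so $h^0(C,L)\geq 1$ and a nonzero section $\sigma$ exists. (In fact Condition~\ref{cond:1} amounts to $\deg\pi_*L=0$; since $\overline C\cong\PP^1$ this forces $\pi_*L\cong\cO_{\PP^1}$, so $\Gamma(C,L)$ is exactly one dimensional, and $\sigma$ is unique up to scalar.) This is the single place where the genus-$0$ hypothesis is essential: in higher genus Condition~\ref{cond:1} would give only $\chi(C,L)=1-g$, and $L$ might well have no sections at all.

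It remains to verify that $(C,L,\lambda\sigma)$ genuinely defines a point of $U_0\subset\fM^\orb(\sA)$ for every $\lambda$, i.e.\ that the classifying map $C\to\sA$ is representable. Representability of the starting point $(C,L,0)$ forces, at each stacky point $x$ of $C$ with stabilizer $\mu_{r_x}$, the numerator $a_x$ of $\age_x(L)=a_x/r_x$ to be coprime to $r_x$; in particular $a_x\geq 1$ when $r_x>1$, so every section of $L$ — and hence $\sigma$ together with all its scalar multiples — vanishes at $x$. Consequently $\lambda\sigma$ sends each stacky point of $C$ into $\sD$, where the induced homomorphism of automorphism groups is the injection $\mu_{r_x}\hookrightarrow\Gm$, while at the remaining points of $C$ the stabilizer is trivial; thus $C\to\sA$ is representable. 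Since Conditions~\ref{cond:1} and~\ref{cond:2} depend only on $(C,L)$, which does not change, the family $(C,L,\lambda\sigma)$ lies entirely in $U_0$ and the argument closes. The only fussy point is this representability bookkeeping; the substantive content is the Riemann--Roch computation showing $h^0(C,L)\neq 0$, which genuinely uses both $g=0$ and Condition~\ref{cond:1}.
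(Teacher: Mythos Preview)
Your proof is correct and follows essentially the same route as the paper: reduce to the case $s\equiv 0$, use Condition~\ref{cond:1} together with $g=0$ to see that $\pi_\ast L\cong\cO_{\PP^1}$ (equivalently $h^0(C,L)\geq 1$), and then scale a nonzero section to deform $(C,L,0)$ into the totally nondegenerate locus. Your additional verification of representability for the deformed map is a point the paper leaves implicit, so in that respect your argument is slightly more complete.
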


\begin{lemma} \label{lem:Un-dense-in-next}
For each $n$, the open substack $U_n$ is dense in $U_{n+1}$.
\end{lemma}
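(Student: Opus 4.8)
The plan is to work one-node-at-a-time: given a point $(C_0, L_0, s_0)$ of $U_{n+1}$ with $C_0$ having exactly $n+1$ nodes, I will produce a one-parameter deformation that smooths a single carefully chosen node while keeping the remaining curve (and its genus-$0$ topology) intact, and then check that the nearby curves still lie in $\fM^\orb_{g=0}(\sA)'$—i.e. that Conditions~\ref{cond:1} and~\ref{cond:2} are open and are not destroyed by the smoothing. Since $U_{n+1}$ is reduced and the smoothings move into $U_n$, density follows. The first reduction is purely deformation-theoretic: smoothing one node of a twisted curve is unobstructed, and the local picture at a node of a twisted curve with cyclic stabilizer of order $e$ is $[\Spec\CC[x,y]/(xy)\,/\,\mu_e]$ with $\mu_e$ acting by $(x,y)\mapsto(\zeta x,\zeta^{-1}y)$; it deforms to $[\Spec\CC[x,y,t]/(xy-t^e)\,/\,\mu_e]$ over $\CC[t]$, which is again a family of twisted curves. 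A line bundle extends over such a smoothing precisely when the two ``ages'' at the node are opposite modulo $1$ (equivalently, when $L_0$ restricted to the formal neighborhood of the node has a compatible $\mu_e$-linearization on both branches)—and this balancing holds automatically for a line bundle on a connected twisted curve, so $L_0$ always extends.

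The delicate point is the choice of which node to smooth so that Condition~\ref{cond:2} survives. Think of the dual tree $T$ of $C_0$; smoothing the node $\nu$ corresponding to an edge $e$ of $T$ merges the two components $D', D''$ meeting at $\nu$ into a single component $D'=D'\cup_\nu D''$ of the new curve, and $\deg(L\rest{D'\cup D''}) = \deg(L_0\rest{D'}) + \deg(L_0\rest{D''})$ (up to a correction by the age at $\nu$, which becomes $0$ after smoothing—so in fact the new degree is the sum of the two old degrees \emph{minus} the age contribution that was being carried at $\nu$, hence lies strictly between the two). Since $C_0$ has $\geq 1$ node, pick a leaf edge $e$ of $T$, so one of $D', D''$, say $D''$, is a terminal component. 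All proper subcurves of the smoothed curve are either (a) proper subcurves of $C_0$ not containing $D''$, for which the degree is unchanged, or (b) subcurves obtained from a subcurve $E\ni D'$ of $C_0$ by replacing $D'\cup D''$ with its smoothing, for which $\deg = \deg(L_0\rest{E}) + \deg(L_0\rest{D''}) - \age_\nu$; in either case the bound $-\tfrac12<\deg<\tfrac12$ is inherited from the corresponding subcurve(s) of $C_0$ because $\deg(L_0\rest{D''})-\age_\nu \in (-\tfrac12,\tfrac12)$ is exactly Condition~\ref{cond:2} applied to $D''$ and the total degree of $L_0$ over the two-component union $D'\cup D''$ is pinched between the two individual bounds. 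Condition~\ref{cond:1} is preserved because smoothing a node does not change $\deg L$ globally and removes a node without changing the set of smooth marked points (the node's two ``special points'' are internal and carry no age after smoothing, matching the loss on the degree side).

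I would organize the write-up as: (i) a deformation lemma smoothing a single node of a twisted curve and extending $(L,s)$ (citing \cite{AV} or \cite{ACFW} for the twisted-curve local model, and noting $s$ extends since $L$ does and one can take $s$ to vanish on the vanishing divisor extended flatly); (ii) the combinatorial choice of a leaf edge of the dual tree and the degree bookkeeping above verifying that the family stays in $\fM^\orb_{g=0}(\sA)'$; (iii) conclude by openness of $U_n$ in $U_{n+1}$ (which is clear, since having at most $n$ nodes is an open condition) plus the just-constructed curve through $(C_0,L_0,s_0)$ meeting $U_n$. The main obstacle is step (ii)—specifically, making the degree/age bookkeeping under smoothing precise, since the age at the smoothed node interacts with Condition~\ref{cond:2} and one must be careful that the \emph{new} subcurves (those straddling the smoothed node) do not violate the $(-\tfrac12,\tfrac12)$ bound; the leaf-edge choice is what makes this go through, because it guarantees the ``extra'' piece $D''$ being absorbed already satisfies the strict bound on its own.
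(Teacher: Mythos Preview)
Your proposal has a genuine gap: the extension of the section $s$ is not automatic, and in fact your single-leaf-node smoothing can fail to admit any extension of $(L,s)$. Take $C_0 = D_1 \cup D_2 \cup D_3$ a chain with $\mu_5$-stabilizers at both nodes $\nu_1,\nu_2$, with $\age_{\nu_i}(L_0|_{D_2}) = 4/5$ (so $\age_{\nu_1}(L_0|_{D_1}) = \age_{\nu_2}(L_0|_{D_3}) = 1/5$ by balancing), one smooth marked orbifold point of small age on each of $D_1,D_3$, and $s_0$ nonzero on $D_1,D_3$ but identically zero on $D_2$. Lemma~\ref{lem:structure-of-Ls} forces $\deg L_0|_{D_2} = 8/5 - 2 = -2/5$, and one checks this lies in $\fM^{\orb}_{g=0}(\sA)'$. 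Now smooth the leaf edge $\nu_1$. Since $H^1(C_0,\cO_{C_0})=0$ the extension $L'$ is unique, and on the generic fibre the merged component $\tilde D$ carries orbifold points of ages $a$ and $4/5$ with $\deg L'|_{\tilde D} = \deg L_0|_{D_1\cup D_2} = a - 1/5$; this forces $\pi_\ast L'|_{\tilde D} \simeq \cO_{\PP^1}(-1)$, so $H^0(\tilde D, L'|_{\tilde D}) = 0$. Any extended section therefore vanishes on $\tilde D$, hence on its specialization $D_1\cup D_2$, contradicting $s_0|_{D_1}\neq 0$. So no extension of $s_0$ exists over this smoothing.

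The paper's proof is organized around exactly this obstacle: rather than choosing a node combinatorially, it lets the vanishing of $s$ dictate the smoothing. After reducing (Step~1) to the situation where components with $s\equiv 0$ are isolated from one another, it takes such a component $E$ and smooths \emph{all} nodes on $E$ simultaneously, using Lemma~\ref{lem:structure-of-Ls} to write down an explicit Cartier divisor on the family whose associated $(L',s')$ visibly restricts to $(L_0,s_0)$. In the example above this means smoothing $\nu_1$ and $\nu_2$ at once. Two side remarks: your leaf-edge trick is aimed at preserving Condition~\ref{cond:2}, but that is automatic under any single-node smoothing (proper subcurves of the smoothed curve correspond to proper subcurves of $C_0$ containing both or neither of $D',D''$, carrying the same degree), so the real issue is elsewhere; and your degree formula ``sum minus the age at $\nu$'' is not correct---degree is additive over components and constant in flat families, with no age correction.
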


\begin{proof}[Proof of Lemma~\ref{lem:non-degen-dense-in-smooth}]
We must show that if $s$ vanishes identically then it can be deformed not to vanish identically.  But Conditions~\ref{cond:1} and~\ref{cond:2} combine here to imply that $0 \leq \deg L < 1/2$.  Therefore, if $\pi : C \rightarrow \oC$ denotes the projection to the coarse moduli space, $\pi_\ast L$ is trivial.  We may therefore view $s$ as a section of the trivial line bundle on $\oC$, which can clearly be deformed to a nowhere vanishing section.  The corresponding section of $L$ will be generically nonzero.
\end{proof}

The following lemmas will be useful in our proof of Lemma~\ref{lem:Un-dense-in-next}. 

\begin{lemma} \label{lem:vanishing-at-node}
Suppose $(C, L, s)$ is a point of $\fM^{\orb}_{g=0}(\sA)'$ and $x$ is a node of $C$ such that $s(x) = 0$.  
\begin{enumerate}
\item If $L \rest{x}$ is trivial then $s$ vanishes identically on both branches of $C$ containing $x$.
\item If $L \rest{x}$ is non-trivial then $s$ vanishes identically on at least one branch of $C$ containing $x$.
\end{enumerate}
\end{lemma}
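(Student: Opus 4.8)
The plan is to reduce the statement to a component‑by‑component computation, using only the strict inequality in Condition~(2).

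First I would record the shape of $C$: since $g=0$, the curve $C$ is a tree of twisted rational curves, so every node is separating, and the node $x$ lies on exactly two distinct irreducible components $Y_1$ and $Y_2$ --- precisely the two branches of $C$ through $x$. Because $C$ has a node it is reducible, so each $Y_i$ is a \emph{proper} subcurve and Condition~(2) applies to it. The local input I would use is the standard computation on a smooth twisted rational curve $Z$ with coarse space $\PP^1$: for a line bundle $M$ on $Z$, setting $e := \deg M - \sum_p \age_p(M)$ (the sum over the orbifold points of $Z$, which is automatically an integer), the coarse moduli map $\pi\colon Z\to\PP^1$ satisfies $\pi_\ast M\cong\mathcal{O}_{\PP^1}(e)$, so that $H^0(Z,M)\cong H^0(\PP^1,\mathcal{O}(e))$; moreover when $e=0$ the nonzero section of $M$ (unique up to scalar) vanishes at a point of $Z$ exactly when that point is an orbifold point with $\age_p(M)>0$.

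Next I would apply this with $Z=Y_i$ and $M=L\rest{Y_i}$. Condition~(2) gives $\deg L\rest{Y_i}<\tfrac12$, and since ages are non‑negative this forces $e_i\le 0$. Hence, if $s$ does not vanish identically on $Y_i$, then $H^0(Y_i,L\rest{Y_i})\ne 0$, which forces $e_i=0$; consequently $\deg L\rest{Y_i}=\sum_p\age_p(L\rest{Y_i})\ge\age_x(L\rest{Y_i})$, and $s\rest{Y_i}$ vanishes only at the orbifold points of $Y_i$ of positive age. Since $s(x)=0$, this yields $\age_x(L\rest{Y_i})>0$, and hence that $L\rest{x}$ is a nontrivial character of the stabilizer at $x$. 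This one observation drives both parts. Part~(1) is then immediate: if $L\rest{x}$ is trivial, its age at $x$ along each branch is $0$, so by the contrapositive of the previous step $s$ vanishes identically on each of $Y_1$ and $Y_2$. For part~(2) I would argue by contradiction: if $s$ vanishes identically on neither branch, then the previous step gives $\deg L\rest{Y_i}\ge\age_x(L\rest{Y_i})$ for $i=1,2$. Since $x$ is a balanced node and $L\rest{x}$ is nontrivial, the two ages of $L$ at $x$ sum to $1$, so $\deg L\rest{Y_1}+\deg L\rest{Y_2}\ge 1$; but Condition~(2), applied to the proper subcurves $Y_1$ and $Y_2$ separately, gives $\deg L\rest{Y_1}+\deg L\rest{Y_2}<\tfrac12+\tfrac12=1$, a contradiction.

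The only step with genuine content is the local computation on a twisted $\PP^1$ --- the identification $\pi_\ast M\cong\mathcal{O}_{\PP^1}(e)$ together with the description of the vanishing locus of a section when $e=0$ --- which I would either cite or write out carefully, being mindful of the conventions for orbifold degrees and ages; the rest is bookkeeping with the non‑negativity of ages, the strict inequality in Condition~(2), and the elementary fact that the two ages of a line bundle at a balanced node sum to $0$ or to $1$ according as the fiber is the trivial or a nontrivial character. (Note that only Condition~(2), and not Condition~(1), enters.)
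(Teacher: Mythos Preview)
Your proof is correct and follows essentially the same strategy as the paper: both use Condition~(2) to bound the degree of $L$ on a branch where $s$ does not vanish identically, and for part~(2) both use that the ages on the two sides of a balanced node with nontrivial $L\rest{x}$ sum to $1$ (the paper singles out the branch with age $\geq 1/2$ rather than summing, which is cosmetically different but equivalent). The only substantive difference is in part~(1): the paper invokes representability of $C\to\sA$ directly to conclude that when $L\rest{x}$ is trivial the node $x$ is a scheme point, so a zero of $s$ there forces $\deg L\rest{D}\geq 1$; you instead reach the contradiction via your pushforward computation, arguing that a zero of the section at $x$ forces $\age_x(L\rest{Y_i})>0$ and hence $L\rest{x}$ nontrivial.
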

\begin{proof}
For the first assertion, we note that if $L \rest{x}$ is trivial then $C=\oC$ near $x$ by the representability of the map $C \rightarrow \sA$; if $s$ does not vanish identically on an irreducible component $D$ of $C$ containing $x$ then $\deg L \rest{D} \geq 1$, contradicting Condition~\ref{cond:2}.

For the second assertion, assume that $D$ and $E$ are the components of $C$ meeting at $x$.  Then $\age_x L \rest{D}  + \age_x L \rest{E}  = 1$.  Therefore one of these---say $\age_x L \rest{D}$---must be $\geq 1/2$.  But once again, if $s$ does not vanish identically on $D$, then
\begin{equation*}
\deg L \rest{D} \geq \age_x L \rest{D} \geq 1/2,
\end{equation*}
again contradicting Condition~\ref{cond:2}.
\end{proof}

For a possibly orbifold point $x$ on a curve $E$ we write $r_x$ for its index, so that $\deg x = 1/r_x$. If $L$ is a line bundle on $E$, then $0 \leq r_x \age_xL < r_x$ is an integer.
\begin{lemma} \label{lem:structure-of-Ls}
Suppose that $(C,L,s)$ is a point of $\fM^{\orb}_{g=0}(\sA)'$, that $E$ is an irreducible component of $C$.
\begin{enumerate}[label=(\roman{*})]
\item \label{lem:structure-of-Ls:1} If $s$ does not vanish identically on $E$ then 
\begin{equation*}
L \rest{E} \simeq \cO_E\Bigl( \sum_{x \in E} r_x\age_x \bigl( L \rest{E} \bigr) x \Bigr)  .
\end{equation*}
\item \label{lem:structure-of-Ls:2} If $s$ vanishes identically on $E$ but does not vanish identically on any other irreducible component of $C$ meeting $E$ then
\begin{equation*} 
L \rest{E} \simeq \cO_E\Bigl( \sum_{x \in E} r_x\age_x \bigl( L \rest{E} \bigr) x \Bigr) \tensor \cO_E(- \bigl| E \cap C^{\rm sing} \bigr| ) .
\end{equation*}
\end{enumerate}
\end{lemma}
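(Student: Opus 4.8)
For part (i), restrict $s$ to $E$. Since a line bundle on a twisted curve of genus $0$ is determined up to isomorphism by its degree together with its ages at the special points (on an irreducible component $E$ one has $\cO_E(r_x x)\simeq\pi^*\cO_{\PP^1}(1)$ at each special point $x$ of index $r_x$, so $\deg$ and the ages $\bigl(\age_x(L|_E)\bigr)_{x\in E}$ pin down the class), it suffices to compute both. As $s|_E\neq 0$ it is a regular section and $L|_E\simeq\cO_E(\operatorname{div}(s|_E))$. By definition of the age, a section of $L|_E$ vanishes at a special point $x$ of index $r_x$ to order $\equiv r_x\age_x(L|_E)\pmod{r_x}$, hence to order at least $r_x\age_x(L|_E)$ in the local uniformizer; thus $\operatorname{div}(s|_E)=\sum_{x\in E}r_x\age_x(L|_E)\,x+Z$ with $Z$ effective. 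The key point is that $\deg Z$ is a non-negative \emph{integer}, because any extra vanishing beyond the forced order is a multiple of $r_x$ at a special point and an integer at an ordinary point. Now $\deg Z=0$: if $E=C$ then $C$ is irreducible and smooth, so Condition~\ref{cond:1} gives $\deg L|_E=\sum_{x\in E}\age_x(L|_E)$; if $E$ is a proper subcurve then $\deg L|_E=\sum_{x\in E}\age_x(L|_E)+\deg Z\ge\deg Z$ while $\deg L|_E<\tfrac12$ by Condition~\ref{cond:2}, so the non-negative integer $\deg Z$ vanishes. Hence $Z=0$ and (i) follows.

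For part (ii), let $E_1,\dots,E_k$ be the components of $C$ meeting $E$, at nodes $x_1,\dots,x_k$. Since $s|_E\equiv 0$ we have $s(x_j)=0$, and as $s$ does not vanish identically on $E_j$, Lemma~\ref{lem:vanishing-at-node}(1) forces $L|_{x_j}$ to be non-trivial, so $\age_{x_j}(L|_E)=1-\age_{x_j}(L|_{E_j})$. Applying part (i) to the $s$-nonvanishing component $E_j$ gives $\deg L|_{E_j}=\sum_{y\in E_j}\age_y(L|_{E_j})<\tfrac12$ by Condition~\ref{cond:2}; since all ages are non-negative this yields $\age_{x_j}(L|_{E_j})<\tfrac12$, hence $\age_{x_j}(L|_E)>\tfrac12$, and the same estimate shows that a node lying on two $s$-nonvanishing components must be untwisted (two branch-ages below $\tfrac12$ cannot sum to $1$). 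All ages of $L|_E$ are now known and agree with those of the proposed bundle $R:=\cO_E\bigl(\sum_{x\in E}r_x\age_x(L|_E)x\bigr)\otimes\cO_E\bigl(-|E\cap C^{\mathrm{sing}}|\bigr)$ (the second factor being pulled back from $\PP^1$, of degree $-k$ with trivial ages), and $\deg R\equiv\deg L|_E\pmod{\ZZ}$.

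It therefore remains to compute $\deg L|_E$; since $\deg L|_E\in(-\tfrac12,\tfrac12)$ by Condition~\ref{cond:2}, it suffices to show $\deg L|_E=\sum_{x\in E}\age_x(L|_E)-k\,(=\deg R)$. For this, compare the identity $\deg_C L=\sum_{E'}\deg_{E'}L|_{E'}$ with Condition~\ref{cond:1}: every $s$-nonvanishing component contributes its full sum of ages by part (i); $s$-nonvanishing components meet only along untwisted nodes, which contribute nothing extra; and the nodes $x_j$ between $E$ and the $E_j$, via $\age_{x_j}(L|_{E_j})=1-\age_{x_j}(L|_E)$, account for the correction $-k$. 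Collecting terms gives $\deg L|_E=\sum_{x\in E}\age_x(L|_E)-k$, so $L|_E\simeq R$. (The presence of $s$-vanishing components other than $E$ is dealt with by an induction on their number.)

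The routine part is (i) together with the age identities at the nodes; the real obstacle is pinning down $\deg L|_E$ in part (ii), where, $s$ being identically zero on $E$, nothing local to $E$ sees the degree. One must bring in the global Condition~\ref{cond:1} and the observation that a twisted node never separates two $s$-nonvanishing components, and arrange the resulting bookkeeping—especially in the presence of several $s$-vanishing components—so that exactly the local quantity $\sum_{x\in E}\age_x(L|_E)-k$ survives.
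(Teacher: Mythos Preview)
Your part~(i) is correct and essentially the paper's argument: both pin down the integer twist (your $\deg Z$, the paper's $d$) as zero, using that $s|_E$ furnishes a nonzero section while $\deg L|_E<\tfrac12$. The paper phrases it via the pushforward $\pi_*L\simeq\cO_{\oE}(d)$ to the coarse curve, but the content is identical.

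For part~(ii) your route diverges from the paper's, and there is a genuine gap. You attempt to compute $\deg L|_E$ \emph{globally} from Condition~\ref{cond:1}: expand $\deg L=\sum_{E'}\deg L|_{E'}$, insert part~(i) on each $s$-nonvanishing $E'$, and solve for $\deg L|_E$. This is fine when $E$ is the \emph{only} $s$-vanishing component, but your parenthetical ``induction on their number'' does not go through. The inductive step would need some other $s$-vanishing component $F$ to satisfy the hypothesis of~(ii), i.e.\ to have all neighbours $s$-nonvanishing; but in a chain such as $E\text{--}E_1\text{--}F\text{--}F'\text{--}E_2$ with $s$ vanishing on $E,F,F'$ and not on $E_1,E_2$, neither $F$ nor $F'$ qualifies, so there is nothing to induct on. Your global balance then has unknown terms $\deg L|_F,\deg L|_{F'}$ that you cannot evaluate.

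The paper avoids this by never computing $\deg L|_E$ exactly. Writing $L|_E\simeq\cO_E\bigl(\sum r_x\age_x\,x\bigr)\otimes\cO_E(d)$ with $d\in\ZZ$, and knowing $\deg L|_E\in(-\tfrac12,\tfrac12)$, it suffices to trap $\sum_{x\in E}\age_x(L|_E)$ inside $(k-\tfrac12,k+\tfrac12)$, which forces $d=-k$. The bounds come from Condition~\ref{cond:2} applied to $D=\overline{C\smallsetminus E}$ (giving $\sum_j\age_{x_j}(L|_{E_j})\le\deg L|_D<\tfrac12$, hence $\sum_{x\in E\cap C^{\mathrm{sing}}}\age_x(L|_E)>k-\tfrac12$) together with a bound on the smooth-point ages. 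Only the degree of $L$ on $E$, on $D$, and on the components \emph{adjacent} to $E$ enters---precisely what the hypothesis supplies---so no induction across distant components is needed. You had already observed $\deg R\equiv\deg L|_E\pmod\ZZ$ and $\deg L|_E\in(-\tfrac12,\tfrac12)$; the cleaner finish is to show $\deg R\in(-\tfrac12,\tfrac12)$ directly by such inequalities rather than to equate degrees through a global balance you cannot close.
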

\begin{proof}
We certainly have 
\begin{equation*}
L\rest{E} \simeq \cO_E \Bigl( \sum_{x \in E} r_x\age_x \bigl( L \rest{E} \bigr) x \Bigr) \tensor \cO_E(d)
\end{equation*}
for some integer $d$.  

Consider first the case where $s$ does not vanish identically on $E$.  Let $\pi : E \rightarrow \oE$ denote the coarse moduli space map.  Then $\pi_\ast L \simeq \cO_{\oE}(d)$.  If $s$ does not vanish identically on $E$ then $\pi_\ast s$ is a non-zero section of $\cO_{\oE}(d)$, whence $d \geq 0$.  On the other hand, $\deg L \rest{E} < 1/2$ by Condition~\ref{cond:2}.  It follows that $d = 0$.

Now assume that $s \rest{E} = 0$.  Let $D = \overlinenormal{C \smallsetminus E}$ so $E \cap D = E \cap C^{\rm sing}$.  We have
\begin{equation*}
\sum_{x \in E \cap C^{\rm sm}} \age_x(L \rest{E}) < \sum_{x \in C^{\rm sm}} \age_x(L) = \deg L < 1/2 .
\end{equation*}
On the other hand,
\begin{align*}
\sum_{x \in E \cap C^{\rm sing}} \age_x(L \rest{E}) & = \sum_{x \in E \cap C^{\rm sing}} \bigl(1 - \age_x(L \rest{D})\bigr)  \\
& = \bigl| E \cap C^{\rm sing} \bigr| - \sum_{x \in E \cap C^{\rm sing}} \age_x (L \rest{D})
\end{align*}
But, by assumption, $s$ does not vanish identically on any component of $D$ meeting $E$, so that
\begin{equation*}
\sum_{x \in E \cap C^{\rm sing}} \age_x (L \rest{D}) \leq \deg L \rest{D} < 1/2 ,
\end{equation*}
again by Condition~\ref{cond:2}.  Thus
\begin{gather*}
\bigl| E \cap C^{\rm sing} \bigr| - 1/2 < \sum_{x \in E \cap C^{\rm sing}} \age_x(L \rest{E}) \leq \big| E \cap C^{\rm sing} \bigr| \\
0 \leq \sum_{x \in E \cap C^{\rm sm}} \age_x(L \rest{E}) < 1/2 .
\end{gather*}
Combining these we discover $d = \: -  \bigl| D \cap E \bigr|$.
\end{proof}

\begin{proof}[Proof of Lemma~\ref{lem:Un-dense-in-next}]\hfill

\begin{enumerate}[wide,
label=\textsc{Step~\arabic{*}.},
ref=\textsc{Step~\arabic{*}}]
\item \label{step:1}

If $C$ contains a node at which $s$ vanishes on both branches, we choose any $1$-parameter smoothing $C'$ of that node that does not smooth any other node.  Since deformations of line bundles on curves are unobstructed, we may extend $L$ to be a line bundle $L'$ on $C'$.  We extend $s$ to be a section that vanishes on the component of $C'$ that specializes to contain the given node.

\item \label{step:3} After \ref{step:1}, we may assume that if $s$ vanishes on an irreducible component $E$ of $C$ then $s$ does not vanish on any irreducible component of $C$ intersecting $E$.  Taking $E$ to be an irreducible component of $C$ on which $s$ vanishes, we shall smooth all of the nodes of $C$ lying on $E$ simultaneously.

We construct a $1$-parameter smoothing $C'$ of $C$ such that $L$ and $s$ extend to a line bundle $L'$ on $C'$ and a section $s'$ of $L'$.  It will be necessary to construct $C'$ with the appropriate local structure near the nodes of $C$.  Fortunately, deformations of nodal curves are unobstructed so every choice of local deformation lifts to a global deformation.  We may therefore select the local structure of $C'$ as we please.

Let $S' = \Spec \CC[[t^{1/A}]]$, where $A$ is a positive integer to be determined later.  Suppose that $\xi$ is a node of $C$ with $\mu_n$-orbifold structure.  Let $D \subset C$ be the irreducible component meeting $\xi$ where $s$ does not vanish identically.  We may select local coordinates $x^{1/n}$ on $D$ and $y^{1/n}$ on $E$ at the point $\xi$ with $x^{1/n} y^{1/n} = 0$.  We can represent $(L, s)$ locally by $(\mathcal{O}_{C}, x^{a})$ where $a = \age_{P}(L \rest{D})$.  Recall that $a \in [0,1) \cap \mathbf{Q}$ and $na$ is an integer.  We give $C'$ the local structure $x^{1/n} y^{1/n} = t^{1/na}$ near $\xi$.  Having done this near every node $\xi$ of $C$ that lies on $E$, we take $A$ to be a positive integer such that $A$ is divisible by all of the integers $na$ described above.

Now we define a Cartier divisor giving $L'$ and $s'$ on the deformation $C'$.  We begin by noting that for every irreducible component $E$ of $C$ where $s$ vanishes we may construct a Cartier divisor on $C'$ as follows:  near the node $\xi$ described in the last paragraph, take the Cartier divisor defined by $x^{a}$ and note that this agrees with the Cartier divisor defined by $t$ in a neighborhood of any smooth point of $E$ and is empty near any point of $C' \smallsetminus E$.  We denote this divisor by $E'$.  Let $F$ be the union of $E$ and all irreducible components of $C$ meeting $E$.  Define $L'$ to be the line bundle that agrees with
\begin{equation*}
\mathcal{O}_{C'}\Bigl(E' + \sum_{x \in F^{\rm sm}} r_x\age_x \bigl( L \rest{E} \bigr)\; x \Bigr)
\end{equation*}
on $F$ and with $L$ on $\overline{C \smallsetminus F}$ (the components of $C$ not affected by the smoothing).  Let $s'$ be the section associated to the function $1$ on $F$ and take $s'$ to agree with $s$ where $L'$ agrees with $L$.

To complete \ref{step:3}, we need to show that $(L' \rest{C}, s' \rest{C}) \simeq (L, s)$.  Since the genus of $C$ is zero, it is sufficient to verify that $(L' \rest{C_0}, s' \rest{C_0}) \simeq (L \rest{C_0}, s \rest{C_0})$ for every irreducible component $C_0$ of $C$.  

When $C_0$ is not contained in $F$, this is true by definition.  When $C_0$ is one of the irreducible components of $C$ meeting $E$ in a node $x$ note that $\mathcal{O}_{C_0}(E') = \mathcal{O}_{C_0}(r_x \age_x(L \rest{C_0}) x)$.  Combining this with Lemma~\ref{lem:structure-of-Ls}~\ref{lem:structure-of-Ls:1} gives $L' \rest{C_0} \simeq L \rest{C_0}$.

Finally, when $C_0 = E$, let $D = \overlinenormal{C \smallsetminus E}$.  We can represent $\mathcal{O}_E(E')$ by the divisor 
\begin{multline*}
\sum_{x \in E \cap C^{\rm sm}} r_x\age_x \bigl( L \rest{E} \bigr) x \: - \sum_{x \in E \cap C^{\rm sing}} r_x\age_x \bigl( L \rest{D} \bigr) \; x \\
= \sum_{x \in E \cap C^{\rm sm}} r_x\age_x \bigl( L \rest{E} \bigr) x \: + \sum_{x \in E \cap C^{\rm sing}} r_x\age_x \bigl( L \rest{E} \bigr) \; x \: - \sum_{x \in E \cap C^{\rm sing}} r_x \; x \\
= \sum_{x \in E} r_x\age_x \bigl( L \rest{E} \bigr) \; x \: - \big| E \cap C^{\rm sing} \bigr|
\end{multline*}
which agrees with $L \rest{E}$ by Lemma~\ref{lem:structure-of-Ls}.

\item \label{step:2} We may now assume that there is no irreducible component of $C$ on which $s$ vanishes identically.  Then by Lemma~\ref{lem:vanishing-at-node}, $s$ cannot vanish at any node of $C$.  Therefore we must have
\begin{equation*}
(L,s) \simeq \Bigl(\cO_C\bigl(\sum_{x \in C^{\rm sm}} r_x\age_x(L) \; x \bigr), 1\Bigr).
\end{equation*}
Choose a $1$-parameter smoothing $C'$ of $C$.  Then
\begin{equation*}
(L',s') = \Bigl(\cO_{C'}\bigl(\sum_{x \in {C'}^{\rm sm}} r_x\age_x(L) \; x \bigr), 1 \Bigr)
\end{equation*}
extends $(L,s)$ to $C'$.
\end{enumerate}
\end{proof}

\section{Virtual fundamental classes}
\label{sec:obs}

The purpose of this section is to introduce relative obstruction theories for the maps
\begin{gather*}
\oM^{\rel}(X,D) \rightarrow \fM^{\rel}(\sA,\sD) \\
\oM^{\orb}(X_r) \rightarrow \fM^{\orb}(\sA_r)
\end{gather*}
and show that these obstruction theories can be used to define the virtual fundamental classes of $\oM^{\rel}(X,D)$ and $\oM^{\orb}(X_r)$.  The reader who is willing to accept that the virtual fundamental classes constructed here agree with the usual ones---or to take the construction given here as the definition---may prefer to read only the constructions of the obstruction theories and proceed to the next section.

\subsection{Obstruction theories}
\label{sec:obs-thy}

We will use the formalism for obstruction theories introduced in~\cite{obs}.  The obstruction theories of loc.\ cit.\ are essentially equivalent to those defined by Behrend and Fantechi~\cite{BF}, but the definition of~\cite{obs} avoids the cotangent complex and therefore makes certain verifications easier.  We briefly recall the definition.

Let $p : X \rightarrow Y$ be a morphism of Deligne--Mumford type.  By a \emph{square-zero lifting problem} for $p$ we will mean a diagram~\eqref{eqn:8} in which $S'$ is a square-zero extension of $S$ with ideal sheaf $J$:
\begin{equation} \label{eqn:8} \vcenter{\xymatrix{
S \ar[r] \ar[d] & X \ar[d]^p \\
S' \ar@{-->}[ur] \ar[r] & Y 
}} \end{equation}
Solutions to the lifting problem are dashed arrows rendering the whole diagram commutative.

An \emph{obstruction theory} $\mathfrak{E}$ for $p$ associates to any $X$-scheme $S$ and any quasi-coherent sheaf $J$ on $S$ an \emph{obstruction groupoid} $\mathfrak{E}(S,J)$ and to any square-zero lifting problem~\eqref{eqn:8} an obstruction $\omega \in \mathfrak{E}(S,J)$.  The obstructions and obstruction groups are required to satisfy various compatibility conditions that we summarize briefly:
\begin{enumerate}[label=(\roman{*})]
\item (functoriality) $\mathfrak{E}(S,J)$ is contravariant in $S$, covariant in $J$, and covariant with respect to \emph{affine morphisms} in $S$;
\item (descent) $\mathfrak{E}(S,J)$ is a stack in the \'etale topology on $S$;
\item (naturality of obstructions) the obstruction $\omega$ is natural in $S$ with respect to \'etale pullback, natural in $S$ with respect to affine pushout, and natural in $J$ with respect to pushout of extensions, see \cite[Definition~3.2]{obs};
\item (additivity) the morphism $\mathfrak{E}(S,J \times J') \rightarrow \mathfrak{E}(S,J) \times \mathfrak{E}(S,J')$ is an equivalece for any quasi-coherent sheaves $J$ and $J'$ on $S$;
\item \label{item:obs-leftexact} (left exactness) if
\begin{equation*}
0 \rightarrow J' \rightarrow J \rightarrow J'' \rightarrow 0
\end{equation*}
is an exact sequence of quasi-coherent sheaves on $S$ then the sequence
\begin{equation} \label{Eq:obs-leftexact} 
0 \rightarrow \mathfrak{E}(S,J') \rightarrow \mathfrak{E}(S,J) \rightarrow \mathfrak{E}(S,J'')
\end{equation}
is also exact.
\end{enumerate}

\begin{remark}
Condition \ref{item:obs-leftexact}  above is best motivated via {\em homogeneity} of deformations over trivial square-0 extensions. Since $J' = J\times_{J''} 0$, we have $\cO_S[J'] = \cO_S[J]\times_{\cO_S[J'']} \cO_S$. Schlessinger's axioms in their strong form require that $\mathfrak{E}(S,J') = \mathfrak{E}(S,J) \times_{\mathfrak{E}(S,J'')} S$, which is precisely Equation \ref{Eq:obs-leftexact}.

In fact an obstruction theory for $X$ over $Y$ may be viewed as the necessary collection of data to extend the definition of $X$ to a moduli problem over $Y$ defined on a small class of derived schemes.  The conditions above combine to require this extension be homogeneous.
\end{remark}

The standard way of producing an obstruction theory for $X$ over $Y$ is to identify some refinement $\widetilde{S}$ of the topology of $S$ over which the lifting problem~\eqref{eqn:8} becomes locally trivial but still satisfies descent.  By abstract nonsense, lifts of problem~\eqref{eqn:8} over $\widetilde{S}$ form a gerbe banded by a sheaf of abelian groups $T$ that only depends on $J$.  Then one may take $\mathfrak{E}(S,J) = H^2(\widetilde{S}, T)$ and the class of the aforementioned gerbe in $H^2(\widetilde{S},T)$ is the obstruction.

\subsubsection{Virtual fundamental classes}

Associated to an obstruction theory is an $\mathcal{O}_X$-module stack $\mathfrak{E}_{X/Y}$ whose value on an $X$-scheme $S$ is
\begin{equation*}
\mathfrak{E}_{X/Y}(S) = \mathfrak{E}(S,\mathcal{O}_S) .
\end{equation*}
By~\cite{BF} or~\cite{obs} there is a canonical embedding of the relative intrinsic normal cone stack $\mathfrak{C}_{X/Y}$ in $\mathfrak{E}_{X/Y}$, for any obstruction theory $\mathfrak{E}$.  Should $\mathfrak{E}_{X/Y}$ be a \emph{vector bundle stack} and $Y$ be pure dimensional, one may apply \cite[Proposition~4.3.2]{Kresch} and obtain a virtual fundamental class $[X/Y]^{\rm vir}$ by intersecting $\mathfrak{C}_{X/Y}$ with the zero locus in $\mathfrak{E}_{X/Y}$.

Manolache observed that this construction applies to any cycle in $A_\ast(Y)$ and therefore defines a Gysin pullback homomorphism on Chow groups~\cite{Manolache}:
\begin{equation*}
p^! : A_\ast(Y) \rightarrow A_\ast(X) 
\end{equation*}
One recovers $[X/Y]^{\rm vir}$ as $p^! [Y]$.

\subsubsection{Compatibility of obstruction theories}
\label{sec:Manolache}

Suppose that $X \xrightarrow{f} Y \xrightarrow{g} Z$ is a sequence of morphisms of Deligne--Mumford type and that $\mathfrak{E}$ and $\mathfrak{F}$ are relative obstruction theories for $X$ over $Z$ and for $Y$ over $Z$, respectively.  Assume that, for every $X$-scheme $S$ and every quasi-coherent sheaf $J$ on $S$ we have a morphism
\begin{equation} 
\Phi : \mathfrak{E}(S,J) \rightarrow \mathfrak{F}(S,J)
\end{equation}
that is compatible with the functoriality properties of $\mathfrak{E}$ and $\mathfrak{F}$ and the naturality properties of the obstructions.  Let $\mathfrak{G}(S,J)$ be the kernel of $\Phi$.  Then $\mathfrak{G}(S,J)$ is a relative obstruction theory for $X$ over $Y$.

If, moreover, the maps $\Phi$ are surjective (as morphisms of stacks) then we say the sequence~\eqref{eqn:9} is exact.
\begin{equation} \label{eqn:9}
0 \rightarrow \mathfrak{G} \rightarrow \mathfrak{E} \rightarrow \mathfrak{F} \rightarrow 0
\end{equation}
When $\mathfrak{E}$, $\mathfrak{F}$, and $\mathfrak{G}$ form an exact sequence of perfect relative obstruction theories, Manolache showed that their associated virtual fundamental classes are compatible:  One has $f^! g^! = (gf)^!$ and therefore
\begin{equation*}
f^! [Y/Z]^{\rm vir} = f^! g^! [Z] = (gf)^! [Z] = [X/Z]^{\rm vir} .
\end{equation*}
In particular, if one can arrange for $[Y/Z]^{\rm vir}$ to coincide with $[Y]$ (i.e., if $Y$ is a local complete intersection relative to $Z$; see Appendix~\ref{sect:lci}) then one has
\begin{equation*}
[X/Y]^{\rm vir} = f^! [Y] = f^! [Y/Z]^{\rm vir} = [X/Z]^{\rm vir} .
\end{equation*}

\subsection{Orbifold maps}
\label{sec:obs-orb}

Recal that, by convention the virtual fundamental class of $\oM^{\orb}(X)$ is defined to be the class associated to the perfect relative obstruction theory for $\fM^{\orb}(X)$ relative to $\fM^{\orb}$~\cite[Section~4.5]{AGV-GW}.  We recall the construction of this obstruction theory below and show it yields the same virtual fundamental class as other obstruction theories that are more convenient for our use.

Let $X \rightarrow Y$ be a smooth Deligne--Mumford-type morphism of algebraic  stacks.  There is an induced projection $\fM^{\orb}(X) \rightarrow \fM^{\orb}(Y)$ by composition with the map $X \rightarrow Y$.  We may construct a relative obstruction theory for this projection by considering an infinitesimal lifting problem
\begin{equation} \label{eqn:31} \vcenter{\xymatrix{
S \ar[r] \ar[d] & \fM^{\orb}(X) \ar[d] \\
S' \ar[r] \ar@{-->}[ur] & \fM^{\orb}(Y) 
}} \end{equation}
in which $S'$ is a square-zero extension of $S$ with ideal $J$.  This corresponds to an extension problem for maps:
\begin{equation*} \xymatrix{
& & X \ar[d] \\
C \ar[r] \ar@/^10pt/[urr]^f \ar[d]_\pi & C' \ar@{-->}[ur] \ar[d] \ar[r] & Y \\
S \ar[r] & S'
} \end{equation*}
This simplifies to the following lifting problem:
\begin{equation*} \xymatrix{
C \ar[r]^f \ar[d] & X \ar[d] \\
C' \ar[r] \ar@{-->}[ur] & Y
} \end{equation*}
whose lifts form a torsor on $C$ under the sheaf of abelian groups $f^\ast T_{X/Y} \tensor \pi^\ast J$.  Defining $\fE(S, J)$ to be the category of torsors under $f^\ast T_{X/Y} \tensor \pi^\ast J$ we therefore obtain a section of $\fE(S,J)$ obstructing the existence of a dashed arrow completing diagram~\ref{eqn:31}.  Thus $\fE$ forms a relative obstruction theory for the map $\fM^{\orb}(X) \rightarrow \fM^{\orb}(Y)$.

If we have a sequence of morphisms $X \rightarrow Y \xrightarrow{g} Z$ of Deligne--Mumford type, we obtain a sequence of maps 
\begin{equation*}
\fM^{\orb}(X) \rightarrow \fM^{\orb}(Y) \xrightarrow{h} \fM^{\orb}(Z) .
\end{equation*}
The exactness of the sequence
\begin{equation*}
0 \rightarrow T_{X/Y} \rightarrow T_{X/Z} \rightarrow g^\ast T_{Y/Z} \rightarrow 0
\end{equation*}
gives rise to a sequence of compatible obstruction theories
\begin{equation*}
0 \rightarrow \fE_{\fM^{\orb}(X) / \fM^{\orb}(Y)} \rightarrow \fE_{\fM^{\orb}(X) / \fM^{\orb}(Z)} \rightarrow h^\ast \fE_{\fM^{\orb}(Y) / \fM^{\orb}(Z)} \rightarrow 0 .
\end{equation*}
Exactness on the left is formal and exactness on the right follows from the vanishing of $H^2(C, f^\ast g^\ast T_{Y/Z})$ at a point $f : C \rightarrow X$ of $\fM^{\orb}(X)$ (because $f^\ast g^\ast T_{Y/Z}$ is quasi-coherent and $C$ is a curve).

When $X$ is a proper Deligne--Mumford stack, the virtual fundamental class for $\oM^{\orb}(X)$ is constructed using the obstruction theory defined as above for the morphism from $X$ to a point. 

We will apply this in the case where $X$ is equipped with a smooth divisor $D \subset X$ giving rise to a morphism $X \rightarrow \sA$.  We therefore have a sequence of morphisms of Deligne--Mumford type
\begin{equation*}
X \rightarrow \sA \rightarrow \BGm \rightarrow (\text{point})
\end{equation*}
giving rise to
\begin{equation*}
\fM^{\orb}(X) \rightarrow \fM^{\orb}(\sA) \rightarrow \fM^{\orb}(\BGm) \rightarrow \fM^{\orb} .
\end{equation*}
Since $\fM^{\orb}(\BGm)$ is smooth and unobstructed, the compatiblity of the obstruction theories in the sequence
\begin{equation*}
\fM^{\orb}(X) \rightarrow \fM^{\orb}(\BGm) \rightarrow \fM^{\orb}
\end{equation*}
shows that the virtual fundamental class of $\oM^{\orb}(X)$ can be defined relative to $\fM^{\orb}(\BGm)$.  On the other hand, the morphisms in the sequence
\begin{equation*}
\fM^{\orb}(X) \rightarrow \fM^{\orb}(\sA) \rightarrow \fM^{\orb}(\BGm)
\end{equation*}
are of Deligne--Mumford type and have compatible obstruction theories.  Therefore the virtual fundamental class of $\fM^{\orb}(X)$ is the virtual pullback, via the relative obstruction theory of $\fM^{\orb}(X) / \fM^{\orb}(\sA)$ of the relative virtual fundamental class of $\fM^{\orb}(\sA) / \fM^{\orb}(\BGm)$.

Now we restrict attention to genus~$0$ maps and place ourselves in the case where $X$ is replaced by $X_r = \sqrt[r]{X,D}$, for some smooth scheme $X$.  We select $r$ accoring to the following criteria:

\begin{lemma} \label{lem:twisting-choice}
Let $\beta$ be an effective class in $H_2(X, \ZZ)$ and let $d = D . \beta$.  Set $\kappa = \max_{0 \leq \gamma \leq \beta} \bigl| D . \gamma \bigr|$, the maximum taken over all classes $\gamma$ such that both $\gamma$ and $\beta - \gamma$ are effective.  Let $r$ be an integer larger than $2 \kappa$ and all of the contact orders $k_i$.  Then the map $\oM^{\orb}_{g=0}(X_r,\beta) \rightarrow \fM^{\orb}_{g=0}(\sA_r)$ factors through in $\fM^{\orb}_{g=0}(\sA_r)'$ (Section~\ref{sec:orb-dense-open}).
\end{lemma}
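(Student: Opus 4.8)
The plan is to check that every object of $\oM^{\orb}_{g=0}(X_r,\beta)$ maps to a triple $(C,L,s)$ satisfying the two conditions~\ref{cond:1} and~\ref{cond:2} that cut out $\fM^{\orb}_{g=0}(\sA_r)'$ inside $\fM^{\orb}_{g=0}(\sA_r) \cong \fM^{\orb}_{g=0}(\sA)$; since $\fM^{\orb}_{g=0}(\sA_r)'$ is an open substack, the asserted factorization is equivalent to this pointwise assertion. Here, for a stable map $f : C \to X_r$, the corresponding object of $\fM^{\orb}(\sA_r)$ is the composite $C \to X_r \to \sA_r$, so that $L = f^\ast \mathcal{O}_{X_r}(D_r)$ and $s$ is the pullback of the tautological section.

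First I would record the only computation needed. Letting $g : C \to X$ denote the composite $C \xrightarrow{f} X_r \xrightarrow{p} X$, the defining identity $\mathcal{O}_{X_r}(D_r)^{\tensor r} \simeq p^\ast\mathcal{O}_X(D)$ for the root stack gives $L^{\tensor r} \simeq g^\ast\mathcal{O}_X(D)$, whence for every subcurve $E \subseteq C$,
\begin{equation*}
r\cdot\deg\bigl(L\rest{E}\bigr) = D\cdot g_\ast[E],
\end{equation*}
by the projection formula; in particular $r\cdot\deg L = D\cdot g_\ast[C] = D\cdot\beta = d$.

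Next I would deduce the two conditions. For Condition~\ref{cond:2}: if $E \subsetneq C$ is a proper subcurve, set $\gamma = g_\ast[E]$, so $\beta - \gamma = g_\ast[\overline{C\smallsetminus E}]$, and both $\gamma$ and $\beta - \gamma$ are effective, being pushforwards of the fundamental classes of proper curves. Hence $\bigl|D\cdot\gamma\bigr| \leq \kappa$, so $\bigl|\deg(L\rest{E})\bigr| = \bigl|D\cdot\gamma\bigr|/r \leq \kappa/r < 1/2$ because $r > 2\kappa$, which is exactly Condition~\ref{cond:2}. For Condition~\ref{cond:1}: we have $\deg L = d/r$ from the displayed formula, while the smooth points of $C$ carrying nontrivial orbifold structure are among the marked points, the $i$-th of which has age $k_i/r$ by the definition of the twisted sector of age $k_i/r$ (the hypothesis $r > k_i$ ensures this is a genuine age in $[0,1)$), marked points with $k_i = 0$ not being orbifold. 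Summing, $\sum_{x \in C^{\rm sm}} \age_x(L) = \tfrac{1}{r}\sum_i k_i = d/r$, using the relation $\sum_i k_i = D\cdot\beta$ built into the definition of $\oM^{\orb}_{g=0}(X_r,\beta)$. This equals $\deg L$, so Condition~\ref{cond:1} holds and the lemma follows.

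The main thing to watch --- rather than a serious obstacle --- is the bookkeeping in the verification of Condition~\ref{cond:1}: one should use representability of $C \to X_r$ to see that the smooth orbifold locus of $C$ is precisely the set of marked points $p_i$ with $k_i > 0$, with $\age_{p_i}(L) = k_i/r$ there; and one should observe that a ``subcurve'' in Condition~\ref{cond:2} may be taken to be a (reduced) union of irreducible components, so that $\gamma$ and $\beta - \gamma$ are visibly effective. Everything else reduces to the numerical constraints already implicit in the moduli problem together with the degree computation above.
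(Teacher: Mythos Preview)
Your proof is correct and follows essentially the same approach as the paper's own proof: both verify Conditions~\ref{cond:1} and~\ref{cond:2} directly, using $\deg(L\rest{E}) = \tfrac{1}{r} D\cdot\gamma$ with $\gamma$ the image class of the subcurve to get Condition~\ref{cond:2}, and the relation $\sum k_i = D\cdot\beta$ together with $r>k_i$ (so that $k_i/r$ is an honest age) to get Condition~\ref{cond:1}. If anything, your write-up is slightly more careful---you pass explicitly to $X$ via $g$ before invoking the bound $\kappa$, and you flag the role of representability in identifying the smooth orbifold locus with the marked points of positive contact order.
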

\begin{proof}
Consider a map $f : C \rightarrow X_r$ with $f_\ast [C] = \beta$.  If $C_0 \subset C$ is a proper subcurve and $\gamma = f_\ast [C_0]$ then, we have
\begin{equation*}
\bigl| D_r . \gamma \bigr| = \Bigl| \frac{1}{r} D . \gamma \Bigr| \leq \Bigl| \frac{\kappa}{r} \Bigr| < \frac{1}{2} .
\end{equation*}
This gives Condition~\ref{cond:2} of Section~\ref{sec:orb-dense-open}.  To get the Condition~\ref{cond:1} of Section~\ref{sec:orb-dense-open}, recall that $D . \beta = \sum k_i$ by assumption (see Section~\ref{sec:thm-statement}).  Thus $D_r . \beta = \sum \frac{k_i}{r}$.  But $0 \leq \frac{k_i}{r} < 1$ so $\frac{k_i}{r}$ is precisely the age of $f^\ast \mathcal{O}_{X_r}(D_r)$ at the $i$-th marked point. 
\end{proof}

According to the lemma, our choice of $r$ guarantees that the map $\oM^{\orb}_{g=0}(X_r, \beta) \rightarrow \fM^{\orb}(\sA)$  factors through $\fM_{g=0}^{\orb}(\sA)'$.  By Lemma~\ref{lem:non-degen-dense-in-M-orb}, the stack $\fM^{\orb}_{g=0}(\sA)'$ contains a dense open substack that is unobstructed relative to $\fM^{\orb}(\BGm)$.  It follows by Lemma~\ref{lem:obs-lci} that $\fM^{\orb}_{g=0}(\sA)' \rightarrow \fM^{\orb}(\BGm)$ is a local complete intersection morphism and that the virtual fundamental class of $\fM^{\orb}_{g=0}(\sA)'$ defined using this relative obstruction theory coincides with the fundamental class.  By Manolache's theorem \cite[Theorem~4.8]{Manolache} it now follows that the virtual fundamental class of $\oM^{\orb}_{g=0}(X_r,\beta)$ may be defined relative to $\fM^{\orb}_{g=0}(\sA)'$:

\begin{proposition}
Assume that $r$ satisfies
the conditions of Lemma~\ref{lem:twisting-choice}.
Then the virtual fundamental class of $\oM^{\orb}_{g=0}(X_r, \beta)$ relative to $\fM$ coincides with the virtual fundamental class relative to $\fM^{\orb}_{g=0}(\sA)'$.
\end{proposition}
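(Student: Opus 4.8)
The proposition is the end of the chain of reductions carried out in Sections~\ref{sec:obs-orb} and~\ref{sec:Manolache}, so the plan is simply to assemble them.  First I would fix the commutative sequence of Deligne--Mumford-type morphisms
\[
  \oM^{\orb}_{g=0}(X_r,\beta) \longrightarrow \fM^{\orb}(\sA) \longrightarrow \fM^{\orb}(\BGm) \longrightarrow \fM^{\orb}
\]
induced by $X_r \rightarrow \sA \rightarrow \BGm \rightarrow (\mathrm{point})$, together with the compatible perfect relative obstruction theories constructed in Section~\ref{sec:obs-orb} from the relative tangent complexes (these are vector bundles, since $X_r \rightarrow \sA$ and $\sA \rightarrow \BGm$ are smooth).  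The virtual fundamental class relative to $\fM$ appearing in the statement is, by the convention recalled from \cite[Section~4.5]{AGV-GW}, the one attached to the composite obstruction theory; and since $\fM^{\orb}(\BGm) \rightarrow \fM^{\orb}$ is smooth and unobstructed, the compatibility of Section~\ref{sec:Manolache} shows this class equals the one defined relative to $\fM^{\orb}(\BGm)$.  It therefore remains to match the class defined relative to $\fM^{\orb}_{g=0}(\sA)'$ with the class defined relative to $\fM^{\orb}(\BGm)$.

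For this I would use the hypothesis on $r$.  By Lemma~\ref{lem:twisting-choice} the projection $\oM^{\orb}_{g=0}(X_r,\beta) \rightarrow \fM^{\orb}(\sA)$ factors through the open substack $\fM^{\orb}_{g=0}(\sA)'$; restricting the relative obstruction theory of $\oM^{\orb}(X_r)$ over $\fM^{\orb}(\sA)$ along this open immersion leaves the virtual pullback unchanged, so we may work over $\fM^{\orb}_{g=0}(\sA)'$.  By Lemma~\ref{lem:non-degen-dense-in-M-orb} the totally non-degenerate objects are dense in $\fM^{\orb}_{g=0}(\sA)'$, and over that dense open the morphism to $\fM^{\orb}(\BGm)$ is unobstructed.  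Lemma~\ref{lem:obs-lci} then applies: the morphism $\fM^{\orb}_{g=0}(\sA)' \rightarrow \fM^{\orb}(\BGm)$ is a local complete intersection, and the virtual fundamental class defined by its relative obstruction theory is the ordinary fundamental class, so that $[\fM^{\orb}_{g=0}(\sA)'/\fM^{\orb}(\BGm)]^{\vir} = [\fM^{\orb}_{g=0}(\sA)']$.

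Finally I would apply Manolache's functoriality of virtual pullback \cite[Theorem~4.8]{Manolache} to the factorization $\oM^{\orb}_{g=0}(X_r,\beta) \xrightarrow{\;f\;} \fM^{\orb}_{g=0}(\sA)' \xrightarrow{\;h\;} \fM^{\orb}(\BGm)$, where $f^!$ is the virtual pullback defined by the relative obstruction theory of $\oM^{\orb}(X_r)$ over $\fM^{\orb}(\sA)$.  Since $(hf)^! = f^! h^!$, the class of $\oM^{\orb}_{g=0}(X_r,\beta)$ relative to $\fM^{\orb}(\BGm)$ equals $f^!\bigl([\fM^{\orb}_{g=0}(\sA)'/\fM^{\orb}(\BGm)]^{\vir}\bigr)$, which by the previous step is $f^![\fM^{\orb}_{g=0}(\sA)']$, and the latter is by definition the virtual fundamental class of $\oM^{\orb}_{g=0}(X_r,\beta)$ relative to $\fM^{\orb}_{g=0}(\sA)'$.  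Combining this with the first paragraph gives the proposition.

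The step demanding genuine care is verifying the hypotheses of Manolache's theorem, not any new geometry: one must check that the three relative obstruction theories in play---of $\oM^{\orb}(X_r)$ over $\fM^{\orb}(\sA)$, of $\fM^{\orb}_{g=0}(\sA)'$ over $\fM^{\orb}(\BGm)$, and of $\oM^{\orb}(X_r)$ over $\fM^{\orb}(\BGm)$---are perfect and fit into a compatible exact sequence, this being the image under the formalism of Section~\ref{sec:obs-orb} of the tangent triangle of $X_r \rightarrow \sA \rightarrow \BGm$, with exactness on the right coming, as there, from the vanishing of $H^2$ of a pulled-back tangent sheaf on the curve.  One must likewise make sure $\fM^{\orb}_{g=0}(\sA)'$ satisfies the hypotheses of Lemma~\ref{lem:obs-lci}, i.e.\ that the dense unobstructed locus furnished by Lemma~\ref{lem:non-degen-dense-in-M-orb} is dense in every irreducible component.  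Everything else is routine bookkeeping.
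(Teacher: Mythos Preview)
Your proposal is correct and follows essentially the same route as the paper: pass from $\fM$ to $\fM^{\orb}(\BGm)$ using smoothness of the latter, factor through $\fM^{\orb}_{g=0}(\sA)'$ via Lemma~\ref{lem:twisting-choice}, invoke Lemma~\ref{lem:non-degen-dense-in-M-orb} and Lemma~\ref{lem:obs-lci} to identify the relative virtual class of $\fM^{\orb}_{g=0}(\sA)'$ with its fundamental class, and conclude by Manolache's functoriality. Your final paragraph flagging the hypotheses to be checked (compatibility of the obstruction triple, density on each component for Lemma~\ref{lem:obs-lci}) is, if anything, more careful than the paper's own discussion, which treats the proposition as the summary of the preceding paragraphs rather than giving a separate proof.
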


\subsection{Relative maps}
\label{sec:obs-rel}

We consider the sequence of maps
\begin{equation*}
\fM^{\rel}(X,D) \rightarrow \fM^{\rel}(\sA, \sD) \rightarrow \fM \rightarrow (\text{point}) 
\end{equation*}
and define compatible relative obstruction theories for some of the maps in the sequence.  We conclude that all define the same virtual fundamental class on $\oM^{\rel}(X,D)$.

\subsubsection{J.\ Li's obstruction theory}

We first describe Li's absolute obstruction theory for $\fM^{\rel}(X,D)$.

Let $C \rightarrow S$ be a family of nodal curves.  Define $\et(C/S)$ to be the category of commutative diagrams
\begin{equation*} \xymatrix{
U \ar[r] \ar[d] & C \ar[d] \\
V \ar[r] & S 
} \end{equation*}
where the horizontal arrows are \'etale.  Such an object is abbreviated $UV$.  We give this category the topology where a family of maps $U' V' \rightarrow UV$ is covering if the families of maps $U' \rightarrow U$ and $V' \rightarrow V$ are covering in the \'etale topology.  See \cite[Section~4.2]{CMW} or \cite[Section~3.2.3]{AMW} for more about this topology.

Let $(X, D)$ be a smooth pair and $\tX$ an expansion of $(X,D)$ parameterized by $S$.  Let
\begin{equation*} \xymatrix{
C \ar[r] \ar[d] & X^{\exp} \ar[d] \\
S \ar[r] & \sT
} \end{equation*}
be an $S$-point of $\fM^{\rel}(X,D)$.  If $S \subset S'$ is a square-zero extension with ideal $J$ one may ask for extensions:
\begin{equation} \label{eqn:32} \vcenter{\xymatrix{
C \ar@/^15pt/[rr] \ar@{-->}[r] \ar[d]_{\pi} & C' \ar@{-->}[d] \ar@{-->}[r] & X^{\exp} \ar[d] \\
S \ar@{->}[r] \ar@/_15pt/[rr] & S' \ar@{-->}[r] & \sT
}} \end{equation}

Such extensions form a stack on the site $\et(C/S)$.  In fact, if we consider the special case where $S' = S[J]$ and $C' = C[\pi^\ast J]$ are the trivial square-zero extensions, then an extension is guaranteed to exist and the collection of all extensions forms a stack of commutative $2$-groups on $\et(C/S)$.  We denote this sheaf $T(C/S, J)$.  In~\cite[Lemma~1.12]{Li2}, Li shows that when the extensions $S'$ and $C'$ are non-trivial, the solutions to the extension problem~\eqref{eqn:32} exist locally in $\et(C/S)$.  It then follows formally that solutions form a torsor on $\et(C/S)$ under $T(C/S, J)$.%
\footnote{Loc.\ cit.\ gives a different calculation of the structure group of this torsor that appears to be correct only over objects $UV$ of $\et(C/S)$ where $U$ covers $V$. This issue is clarified in \cite[Sections 4.3 and A.1]{CMW}.}

Writing $\fE(S, J)$ for the category of $T(C/S,J)$-torsors on $\et(C/S)$, it follows that there is a section $\omega \in \fE(S, J)$ obstructing the existence of a solution to~\eqref{eqn:32}.  That is, $\fE$ is an obstruction theory for $\fM^{\rel}(X,D)$.

We note that in the absense of a stability assumption, $\fE(S,J)$ will be a $2$-category.  However, at an $S$-point of $\oM^{\rel}(X,D)$ the category $\fE(S,J)$ of obstructions will be a $1$-category.  Indeed, the $2$-automorphisms in $\fE(S,J)$ are infinitesimal automorphisms of the moduli problem.

\subsubsection{J.\ Li's relative obstruction theory}

Li's description of the obstruction from the last section is more explicit and goes by way of a relative obstruction theory for $\fM^{\rel}(X,D)$ over $\fM$.  To study this obstruction we consider the lifting problem
\begin{equation*}  \xymatrix{
S \ar[r] \ar[d] & \fM^{\rel}(X,D) \ar[d] \\
S' \ar[r] \ar@{-->}[ur] & \fM
} \end{equation*}
where $\fM$ is the stack of pre-stable curves and $S'$ is a square-zero extension of $S$ with ideal $J$.  This problem translates into the following one:
\begin{equation*} \xymatrix{
C \ar[r] \ar[d]_{\pi} \ar@/^15pt/[rr] & C' \ar@{-->}[r] \ar[d] & X^{\exp} \ar[d] \\
S \ar[r] \ar@/_15pt/[rr] & S' \ar@{-->}[r] & \sT .
}\\[2mm] \end{equation*}

Li shows in \cite[Lemma~1.12]{Li2} that this problem also admits solutions locally in $\et(C/S)$.  It follows that the solutions form a torsor under the stack of commutative $2$-groups $T'(C/S, J)$ defined to be the collection of solutions to the above problem with $S' = S[J]$ and $C' = C[\pi^\ast J]$.  The collection of all such torsors, denoted $\fE'(S,J)$, therefore forms an obstruction theory for $\fM^{\rel}(X,D)$ over $\fM$.

We will write $T''(C/S,J)$ for the stack of commutative $2$-groups on $\et(C/S)$ whose sections over $UV$ are extensions
\begin{equation*} \xymatrix{
U \ar@{-->}[r] \ar[d] & U' \ar@{-->}[d] \\
V \ar[r] & V[J \rest{V}]
} \end{equation*}
We note that $T''(C/S,J)(UV)$ depends only on $U$ so $T''(C/S,J)$ is pushed forward via the closed embedding $\et(C) \rightarrow \et(C/S)$.  As pushforward via a closed embedding is exact, the cohomology of $T''(C/S,J)$ agrees with the cohomology of the corresponding sheaf on $\et(C)$, which one can calculate to be $\uHom(\bL_{C/S}[-1], \pi^\ast J)$ on $\et(C)$.

Now we have an exact sequence
\begin{equation*}
0 \rightarrow T'(C/S,J) \rightarrow T(C/S,J) \rightarrow T''(C/S,J) \rightarrow 0 .
\end{equation*}
As 
\begin{equation*}
H^1(\et(C/S), T''(C/S,J)) = \Ext^2(\bL_{C/S}, \pi^\ast J) = 0,
\end{equation*}
we get an exact sequence
\begin{equation*}
0 \rightarrow \sT''(S,J) \rightarrow \fE'(S,J) \rightarrow \fE(S,J) \rightarrow 0
\end{equation*}
and therefore by
\cite[Proposition~6.5]{obs}, the obstruction theories
$\fE'$ and $\fE$ define the same virtual fundamental class on $\oM^{\rel}(X,D)$.

\subsubsection{The obstruction theory relative to the universal moduli space}
\label{sec:univ-obs}

In \cite{AMW}, it was shown that a similar construction to the above gives a relative obstruction theory for the map $\fM^{\rel}(X,D) \rightarrow \fM^{\rel}(\sA, \sD)$.  Moreover this obstruction theory agrees with the following one:  Consider the lifting problem
\begin{equation}  \label{eqn:36} \vcenter{\xymatrix{
S \ar[r] \ar[d] & \fM^{\rel}(X,D) \ar[d] \\
S' \ar[r] \ar@{-->}[ur] & \fM^{\rel}(\sA,\sD) 
}} \end{equation}
corresponding to the extension problem
\begin{equation*} \xymatrix{
& & X^{\exp} \ar[d] \\
C \ar[r] \ar[d]_\pi \ar@/^10pt/[urr] & C' \ar@{-->}[ur] \ar[r] \ar[d] & \sA^{\exp} \ar[d]  \\
S \ar[r] & S' \ar[r] & \sT ,
} \end{equation*}
which reduces immediately to
\begin{equation} \label{eqn:33} \vcenter{\xymatrix{
C \ar[r] \ar[d] & X^{\exp} \ar[d] \\
C' \ar[r] \ar@{-->}[ur] & \sA^{\exp} .
}} \end{equation}
Noting we have a cartesian diagram
\begin{equation*} \xymatrix{
X^{\exp} \ar[r] \ar[d] & X \ar[d] \\
\sA^{\exp} \ar[r] & \sA
} \end{equation*}
it follows that lifts of~\eqref{eqn:33} form a torsor under $f^\ast T_{X/\sA} \tensor \pi^\ast J = f^\ast T_X^{\log} \tensor \pi^\ast J$.  Here $f$ denotes the composition $C \rightarrow X^{\exp} \rightarrow X$.  It follows that we have a perfect relative obstruction theory $\fE''$ with $\fE''(S,J)$ being the category of torsors on $C$ under $f^\ast T_X^{\log} \tensor \pi^\ast J$.

We now argue that this obstruction theory yields the same virtual fundamental class as the relative obstruction theory $\fE'$ for $\fM^{\rel}(X,D)$ over $\fM$.  Let $T_{\fM^{\rel}(X,D)/\fM^{\rel}(\sA,\sD)}$ be the pushforward of the sheaf $f^\ast T_X^{\log} \tensor \pi^\ast J$ along the closed embedding $\et(C)$ to $\et(C/S)$.  The torsors under $T_{\fM^{\rel}(X,D)/\fM^{\rel}(\sA,\sD)}$ are the same as the torsors under $f^\ast T_X^{\log} \tensor \pi^\ast J$.  There is an exact sequence
\begin{equation*}
0 \rightarrow T_{\fM^{\rel}(X,D)/\fM^{\rel}(\sA,\sD)} \rightarrow T_{\fM^{\rel}(X,D)/\fM} \rightarrow T_{\fM^{\rel}(\sA,\sD)/\fM} \rightarrow 0
\end{equation*}
on $\et(C/S)$ yielding an exact sequence of obstruction theories
\begin{equation*}
0 \rightarrow \fE_{\fM^{\rel}(X,D)/\fM^{\rel}(\sA,\sD)} \rightarrow \fE_{\fM^{\rel}(X,D)/\fM} \rightarrow \fE_{\fM^{\rel}(\sA/\sD)/\fM} \rightarrow 0 .
\end{equation*}
Exactness on the right comes from the vanishing of $H^2(C,f^\ast T_X^{\log} \tensor \pi^\ast J)$ as $f^\ast T_X^{\log} \tensor \pi^\ast J$ is quasicoherent and $C$ is a curve.

\begin{lemma}
The map $\fM^{\rel}(\sA,\sD) \rightarrow \fM$ is representable by algebraic spaces.
\end{lemma}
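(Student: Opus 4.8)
The plan is to verify the standard criterion for representability by algebraic spaces: it suffices to show that the forgetful morphism $F : \fM^{\rel}(\sA,\sD) \to \fM$ is \emph{faithful}, i.e.\ that its relative diagonal is a monomorphism, equivalently that for every scheme $T$ and every object $\xi$ of $\fM^{\rel}(\sA,\sD)$ over $T$ the induced homomorphism of automorphism group schemes $\underline{\mathrm{Aut}}_T(\xi) \to \underline{\mathrm{Aut}}_T(F\xi)$ is injective. Unwinding definitions, an object $\xi$ over $T$ is a triple $(C, \tsA, f)$ consisting of an expansion $\tsA$ of $(\sA, \sD)$ over $T$, a pre-stable orbifold curve $C$ over $T$, and a predeformable $T$-morphism $f : C \to \tsA$; and $F\xi = C$. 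An automorphism of $\xi$ is a pair $(\alpha, \beta)$ where $\alpha$ is an automorphism of $C$, $\beta$ is an automorphism of $\tsA$ over $\sA \times T$, and $\beta \circ f = f \circ \alpha$; its image under $F$ is $\alpha$. Thus the entire content of the lemma is the claim that an automorphism $\beta$ of the expansion with $\beta \circ f = f$ must be the identity.

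I would dispatch this by recalling the structure of $\underline{\mathrm{Aut}}(\tsA / \sA \times T)$ and then using predeformability. Working \'etale-locally on $T$ so that the combinatorial type is constant, $\tsA$ is a chain $\sA = \sA^{(0)}, B^{(1)}, \dots, B^{(n)}$ of accordion components, each $B^{(i)}$ a $\PP^1$-bundle over $\sD \cong \BGm$, and $\underline{\mathrm{Aut}}(\tsA / \sA \times T)$ is a torus whose factors dilate the $B^{(i)}$ (cf.\ \cite{ACFW}); such an automorphism is automatically the identity on $\sA^{(0)}$. It therefore remains to show that no nontrivial dilation of any $B^{(i)}$ can fix $f$.

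This last point is the heart of the proof and the step I expect to require the most care. By minimality of the expansion, each $B^{(i)}$ is reached by the curve: there is a node of $C$ over $B^{(i-1)} \cap B^{(i)}$ (with $B^{(0)} := \sA^{(0)}$), since otherwise $B^{(i)}, \dots, B^{(n)}$ would be components of $\tsA$ disjoint from the image of $f$, which the definition of a relative map excludes. At such a node $p$, predeformability provides \'etale-local coordinates in which $f$ pulls the functions $u, v$ on $\tsA$ back to $x^{e}, y^{e}$ on $C$; since this local model of $f$ is dominant onto $\spec \cO_T[u,v]/(uv - t^{e})$, pullback of functions along $f$ is injective near $f(p)$, so $\beta \circ f = f$ forces $\beta$ to fix $u$ and $v$. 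Hence $\beta$ is the identity in a neighborhood of $f(p)$, and therefore, the components of $\tsA$ being reduced and connected, the identity on all of $B^{(i)}$. Ranging over $i$ gives $\beta = \mathrm{id}$ on $\tsA$, which completes the verification of faithfulness and hence the proof; the only bookkeeping to watch is the correct identification of $\underline{\mathrm{Aut}}(\tsA / \sA \times T)$ in the presence of the stabilizer $\Gm$ already carried by $\sA = [\bA^1/\Gm]$, for which one appeals to the description of expansions in \cite{ACFW}.
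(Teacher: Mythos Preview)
Your proposal is correct and takes essentially the same approach as the paper: both reduce to showing that an automorphism $\beta$ of the expansion $\tsA$ commuting with $f$ must be trivial, identify $\Aut(\tsA/\sA)$ with a split torus indexed by the nodes of $\tsA$, and then use predeformability at a node of $C$ lying over each node of $\tsA$ to kill the corresponding torus factor. The only cosmetic difference is that the paper phrases the last step in terms of the scaling action on the tensor product of normal bundles $N_{\sE^-/\sA^-} \otimes N_{\sE^+/\sA^+}$ rather than in explicit local coordinates as you do.
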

\begin{proof}
Consider\Jonathan{Should I rewrite this?} a geometric point of $\fM^{\rel}(\sA,\sD)$ 
\begin{equation*}
f : C \rightarrow \tsA
\end{equation*}
where $\tsA$ is an expansion of $\sA$.  We wish to show there are no infinitesimal automorphisms of this object fixing $C$.  An automorphism fixing $C$ of such an object is a commutative diagram
\begin{equation*} \xymatrix{
C \ar[r]^{f} \ar[dr]_f & \tsA \ar[d] \\
& \tsA
} \end{equation*}
Recall that $\tsA$ is a chain of copies of $\sP = [ \mathbf{P}^1 / \Gm ]$, joined at nodes.  There is an open neighborhood of each node $\sE \subset \tsA$ where two copies of $\sA$---call them $\sA^+$ and $\sA^-$ and write $\sE^+ \simeq \BGm$ and $\sE^- \simeq \BGm$ for their distinguished divisors---are joined.  Moreover, these copies are joined by isomorphisms $\sE^- \simeq \sE^+$ so that $N_{\sE^-/\sA^-} \tensor N_{\sE^+/\sA^+} \simeq L$ where $L$ is a \emph{specified} line bundle pulled back from the base.  The automorphism group of $\tsA$ is a canonically split torus of rank equal to the number of nodes, with the factor corresponding to $\sE$ acting by scaling $\sL$.

If we fix a node $\sE \subset \tsA$ there is some node $x \in C$ that is carried by $f$ to $\sE$.  Letting $C^-$ and $C^+$ be the two components of $C$ joined at $x$, we get
\begin{gather*}
f^\ast N_{\sE^-/\sA^-} = N_{x^-/C^-}^{\tensor r} \\
f^\ast N_{\sE^+/\sA^+} = N_{x^+/C^+}^{\tensor r}
\end{gather*}
where $r$ is the order of contact of $f$ to $\sE$ at $x$.  Note that there is a canonical identification $N_{x^-/C^-} \tensor N_{x^+/C^+}$ with the deformation space of the node $x$, which is a line bundle on the base.  By predeformability, the identification
\begin{equation*}
f^\ast N_{\sE^-/\sA^-} \tensor f^\ast N_{\sE^+/\sA^+} \simeq \cO
\end{equation*}
is the $r$-th power of the identification of $N_{x^-/C^-} \tensor N_{x^+/C^+}$ with the deformation space of the node.  In particular, the scaling of $L$ is forced by $f$ to be the identity.  Thus there are no nontrivial automorphisms of $\tsA$ commuting with $f$.  That is, $\fM^{\rel}(\sA,\sE) \rightarrow \fM$ is representable by algebraic spaces.
\end{proof}

By Lemma~\ref{lem:rel-dense-open}, there is a dense open subset of $\fM^{\rel}(\sA,\sD)$ that is unobstructed relative to $\fM$.  Thus $\fM^{\rel}(\sA,\sD)$ is a local complete intersection (Lemma~\ref{lem:obs-lci}) with its canonical obstruction theory and the relative virtual fundamental class over $\fM$ is simply the fundamental class.  It follows that
\begin{equation*}
\bigl[ \fM^{\rel}(X,D) \big/ \fM^{\rel}(\sA,\sD) \bigr]^{\vir} = \bigl[ \fM^{\rel}(X,D) \big/ \fM \bigr]^{\rm vir}
\end{equation*}
as required.

\section{Proof of Theorem~\ref{Prop1}}
\label{sec:proof1}

The map 
\begin{equation*}
\Psi = \Psi^{(X,D)} : \oM^\rel(X_r, D_r) \rightarrow \oM^\rel(X,D)
\end{equation*}
fits into a cartesian diagram
\begin{equation} \label{eqn:29} \vcenter{\xymatrix@C=45pt{
\oM^\rel(X_r, D_r) \ar[r]^{\Psi^{(X,D)}} \ar[d] & \oM^\rel(X,D) \ar[d]  \\
\fM^\rel(\sA_r, \sD_r) \ar[r]^{\Psi^{(\sA,\sD)}} \ar[r] & \fM^\rel(\sA,\sD)
}} \end{equation}
in which $(\sA, \sD)$ is the \textit{universal smooth pair}:  $\sA = [ \bA^1 / \Gm]$ and $\sD = [ \: 0 \: / \Gm ] \subset \sA$.  In fact, $(\sA_r, \sD_r) \simeq (\sA, \sD)$ but we use the subscript to emphasize that the map $(\sA_r, \sD_r) \rightarrow (\sA, \sD)$ inducing $\Psi^{(\sA,\sD)}$ is not the identity.

Section~\ref{sec:obs} showed that the virtual fundamental classes of $\oM^\rel(X,D)$ and $\oM^\rel(X_r, D_r)$ are defined, respectively, relative to $\fM^\rel(\sA,\sD)$ and $\fM^\rel(\sA_r, \sD_r)$.  That is, there are perfect relative obstruction theories $\fE$ for $\oM^{\rel}(X,D)$ over $\fM^{\rel}(\sA, \sD)$ and $\fF$ for $\oM^{\rel}(X_r, D_r)$ over $\fM^{\rel}(\sA_r, \sD_r)$ such that the virtual fundamental classes of $\oM^{\rel}(X,D)$ and $\oM^{\rel}(X_r, D_r)$ are pulled back via these obstruction theories from the fundamental classes of $\fM^{\rel}(\sA,\sD)$ and $\fM^{\rel}(\sA_r, \sD_r)$.

In order to apply Costello's theorem~\cite[Theorem~5.0.1]{costello}, we must show that $\fF$ is pulled back via $\Psi^{(X,D)}$ from $\fE$.  We recall the definitions of $\fE$ and $\fF$:  given an extension problem
\begin{equation*} \xymatrix{
S \ar[r] \ar[d] & \oM^{\rel}(X, D) \ar[d] \\
S' \ar[r] \ar@{-->}[ur] & \fM^{\rel}(\sA, \sD)
} \end{equation*}
corresponding to an extension problem
\begin{equation*} \xymatrix{
& & \tX \ar[d]  \\
C \ar[r] \ar[d]_{\pi} \ar@/^10pt/[urr]^f & C' \ar[r] \ar@{-->}[ur] \ar[d] & \tsA \ar[dl] \\
S \ar[r] & S'
} \end{equation*}
the solutions to this problem form a torsor under $f^\ast T_{\tX / \tsA} \tensor \pi^\ast J = g^\ast T_X(- \log D) \tensor \pi^\ast J$ where $g$ denotes the composition $C \rightarrow \tX \rightarrow X$.  The obstruction theory $\fE(S,J)$ is the category of  $(f^\ast T_{\tX / \tsA} \tensor \pi^\ast J)$-torsors on $C$ and the obstruction is the class of this torsor.

The obstruction theory for $\oM^{\rel}(\sX_r, \sD_r)$ over $\fM^{\rel}(\sA_r, \sD_r)$ is defined the same way.  But now we note that there is a cartesian diagram
\begin{equation*} \xymatrix{
\tX_r \ar[r] \ar[d] & \tX \ar[d] \\
\tsA_r \ar[r] & \tsA
} \end{equation*}
so that the torsor of lifts of a diagram
\begin{equation*} \xymatrix{
C \ar[r] \ar[d] & \tX_r \ar[d] \\
C' \ar[r] \ar@{-->}[ur] & \tsA_r
} \end{equation*}
is precisely the same as the torsor of lifts of the induced diagram
\begin{equation*} \xymatrix{
C \ar[r] \ar[d] & \tX \ar[d] \\
C' \ar[r] \ar@{-->}[ur] & \tsA .
} \end{equation*}
This is precisely what it means for $\fF$ to be pulled back from $\fE$.

The only remaining requirement of Costello's theorem is to show that $\fM^{\rel}(\sA_r, \sD_r) \rightarrow \fM^{\rel}(\sA, \sD)$ has pure degree~$1$.  This is Theorem~\ref{thm:rel-birational}.

\section{Proof of Theorem~\ref{Prop2}}
\label{sec:proof2}

\subsection{The cartesian diagram}

This proof follows the same lines as the proof of Theorem~\ref{Prop1}.  The key difference is that the diagram one would naively expect to replace~\eqref{eqn:29} does not commute!  All is not lost, however:  a small modification restores the commutativity.

Define $\fM(\sA_r, \sD_r)^\ast$ to be the moduli space whose $S$-points are commutative diagrams of $S$-stacks
\begin{equation} \label{eqn:34} \vcenter{\xymatrix{
C \ar[r] \ar[d] & \tsA_r \ar[d] \\
\oC \ar[r] & \sA_r \times S
}} \end{equation}
where 
\begin{enumerate}
\item $C$ and $\oC$ are orbifold curves over $S$, 
\item the maps $C \rightarrow \tsA_r$ and $\oC \rightarrow \sA_r$ are twisted pre-stable maps over $S$,
\item the map $C \rightarrow \oC$ is representable and its stabilization is an isomorphism,
\item $\tsA_r$ is an expansion of $\sA_r$ over $S$,
\item the automorphism group of these data fixing $\oC \rightarrow \sA_r$ is finite.
\end{enumerate}

Then we have a diagram
\begin{equation} \label{eqn:30} \vcenter{\xymatrix{
\oM^\rel(X_r, D_r) \ar[r] \ar[d] & \oM^\orb(X_r) \ar[d] \\
\fM^\rel(\sA_r, \sD_r)^\ast \ar[r] & \fM^\orb(\sA_r) .
}} \end{equation}
The upper horizontal map composes a stable relative map $C \rightarrow \tX_r$ with the projection $\tX_r \rightarrow X_r$ and stabilizes the result.  The vertical arrow on the left sends $C \rightarrow \tX_r$ to the outer rectangle in the commutative diagram
\begin{equation*} \xymatrix{
C \ar[r] \ar[d] & \tX_r \ar[r] \ar[d] & X_r \ar[d] \\
\oC \ar[r] & \tsA_r \ar[r] & \sA_r
} \end{equation*}
where $\oC$ is the stabilization of the composition $C \rightarrow \tX_r \rightarrow X_r$.  The lower horizontal arrow in diagram~\eqref{eqn:30} sends a diagram~\eqref{eqn:34} to its bottom half.  Finally the right vertical arrow in~\eqref{eqn:30} sends $C \rightarrow X_r$ to the composition with $X_r \rightarrow \sA_r$.

\begin{proposition}
Diagram~\eqref{eqn:30} is commutative and cartesian.
\end{proposition}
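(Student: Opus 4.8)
The plan is to verify commutativity and the cartesian property of diagram~\eqref{eqn:30} directly, by unwinding the definitions of the four moduli spaces and of the four structure maps on $S$-points.

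\textbf{Commutativity.} First I would trace an $S$-point of $\oM^\rel(X_r,D_r)$, i.e.\ a stable relative map $f : C \to \tX_r$, around both composites. Going right-then-down: we compose with $\tX_r \to X_r$, stabilize to get $\overlinenormal{C} \to X_r$, and then compose with $X_r \to \sA_r$; this yields the map $\overlinenormal{C} \to \sA_r$ together with (after forgetting $X_r$) no further data. Going down-then-right: the left vertical arrow produces the outer rectangle
\begin{equation*} \xymatrix{
C \ar[r] \ar[d] & \tsA_r \ar[d] \\
\overlinenormal{C} \ar[r] & \sA_r ,
} \end{equation*}
and the bottom arrow of~\eqref{eqn:30} then extracts $\overlinenormal{C} \to \sA_r$. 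These agree essentially tautologically, because $\overlinenormal{C}$ is defined in both cases as the stabilization of $C \to \tX_r \to X_r$, and the map $\overlinenormal{C}\to\sA_r$ is in both cases the composite $\overlinenormal{C}\to \tsA_r \to \sA_r$ of the induced maps on coarse/contracted targets. So commutativity is a matter of checking that these stabilizations and induced maps are formed compatibly, which is formal.

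\textbf{Cartesianness.} This is the substantive point. Let $S$ be a scheme, and suppose given a twisted stable map $\overlinenormal{C} \to X_r$ representing an $S$-point of $\oM^\orb(X_r)$, together with an $S$-point of $\fM^\rel(\sA_r,\sD_r)^\ast$, i.e.\ a diagram~\eqref{eqn:34}, such that the two induced maps to $\fM^\orb(\sA_r)$ agree; that is, the composite $\overlinenormal{C}\to X_r\to\sA_r$ equals the map $\overlinenormal{C}\to\sA_r$ appearing in~\eqref{eqn:34} (after matching up the curves $\overlinenormal{C}$, which is part of the fiber-product data). I must produce a unique relative stable map $C \to \tX_r$ inducing all of this. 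The target $\tX_r$ is forced: it is the fiber product $\tsA_r \times_{\sA_r \times S} (X_r \times S)$, an expansion of $X_r$, using the given expansion $\tsA_r$ of $\sA_r$. The map $C \to \tX_r$ must then be the map induced by the two compatible maps $C \to \tsA_r$ (from~\eqref{eqn:34}) and $C \to \overlinenormal{C} \to X_r$ (using $C\to\overlinenormal{C}$ from~\eqref{eqn:34} and $\overlinenormal{C}\to X_r$ from the $\oM^\orb$-point) — and these are compatible over $\sA_r$ precisely because of the matching condition we imposed. This gives existence and uniqueness of the map on the level of stacks; there remain three things to check: (i) that $C \to \tX_r$ is \emph{predeformable}, (ii) that it is \emph{stable} as a relative map, and (iii) that these constructions are mutually inverse equivalences (a $2$-categorical bookkeeping).

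\textbf{Where the work is.} I expect (i), predeformability, to be the main obstacle, since it is a condition at the nodes of $C$ lying over the singular locus of $\tX_r$ that is not visible from the $\sA_r$-side alone in an obvious way; but it should follow because $\tX_r = \tsA_r \times_{\sA_r} X_r$ is smooth over $\tsA_r$ along its singular locus (the singular locus of $\tX_r/S$ is the preimage of that of $\tsA_r/S$, and $X_r\to\sA_r$ is smooth), so the local form of $C\to\tX_r$ at such a node is obtained from the local form of $C\to\tsA_r$ — which \emph{is} predeformable, being part of the datum in $\fM^\rel(\sA_r,\sD_r)^\ast$ coming from $\oM^\rel(X_r,D_r)$ under the bottom map — by base change along the smooth map. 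Item (ii), stability, follows from the finiteness clause (5) in the definition of $\fM^\rel(\sA_r,\sD_r)^\ast$ together with the stability of the given $\oM^\orb(X_r)$-point: an automorphism of $C\to\tX_r$ fixing $X_r$ induces an automorphism of the square~\eqref{eqn:34} fixing $\overlinenormal{C}\to\sA_r$ together with an automorphism of $\overlinenormal{C}$ over $X_r$, and both of those groups are finite over $S$. Item (iii) is routine diagram-chasing once the target $\tX_r$ is pinned down as a fiber product, using the universal property of fiber products of stacks; I would organize it as: the two functors (from the fiber product of the bottom and right maps of~\eqref{eqn:30} to $\oM^\rel(X_r,D_r)$, and back) are quasi-inverse, exhibiting the natural isomorphisms explicitly on objects and morphisms.
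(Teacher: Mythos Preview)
Your approach is correct and essentially the same as the paper's. The paper organizes the cartesian check by first introducing an auxiliary stack $\fM^{\rel}(X_r,D_r)^\ast$ (defined analogously to $\fM^{\rel}(\sA_r,\sD_r)^\ast$), observing that the pre-stable square
\begin{equation*} \xymatrix{
\fM^{\rel}(X_r, D_r)^\ast \ar[r] \ar[d] & \fM^{\orb}(X_r) \ar[d] \\
\fM^{\rel}(\sA_r, \sD_r)^\ast \ar[r] & \fM^{\orb}(\sA_r)
} \end{equation*}
is cartesian ``immediately'' from $X_r^{\exp} = \sA_r^{\exp} \times_{\sA_r} X_r$, and then reducing to the claim that $\oM^{\rel}(X_r,D_r)$ is exactly the preimage of $\oM^{\orb}(X_r)$ inside $\fM^{\rel}(X_r,D_r)^\ast$. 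This last step is your stability argument (ii) almost verbatim: the paper uses the exact sequence of automorphism groups with kernel finite by clause~(5) and quotient finite by stability of $\oC\to X_r$. Your direct construction of $\tX_r$ as a fiber product and of $C\to\tX_r$ by universal property is exactly what underlies the paper's ``immediate'' claim; you just don't name the intermediate stack.

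One small wording slip: in your predeformability argument you justify that $C\to\tsA_r$ is predeformable by saying it is ``part of the datum in $\fM^\rel(\sA_r,\sD_r)^\ast$ coming from $\oM^\rel(X_r,D_r)$ under the bottom map''. This is circular as written---you are trying to \emph{produce} the point of $\oM^\rel(X_r,D_r)$. What you mean is simply that predeformability of $C\to\tsA_r$ is already part of the given $\fM^{\rel}(\sA_r,\sD_r)^\ast$-datum (implicit in the paper's definition, since there is a forgetful map to $\fM^{\rel}(\sA_r,\sD_r)$). With that correction your argument for (i) is fine and indeed more explicit than the paper's, which subsumes predeformability into the word ``immediate''.
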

\begin{proof}
It is clear that the diagram is commutative.  It is also immediate from the cartesian diagram
\begin{equation*} \xymatrix{
X_r^{\exp} \ar[r] \ar[d] & X_r \ar[d] \\
\sA_r^{\exp} \ar[r] & \sA_r
} \end{equation*}
that the diagram
\begin{equation*} \xymatrix{
\fM^{\rel}(X_r, D_r)^\ast \ar[r] \ar[d] & \fM^{\orb}(X_r) \ar[d] \\
\fM^{\rel}(\sA_r, \sD_r)^\ast \ar[r] & \fM^{\orb}(\sA_r)
} \end{equation*}
is cartesian.  It remains only to show that the map $\oM^{\rel}(X_r, D_r) \rightarrow \fM^{\rel}(X_r, D_r)^\ast$ identifies $\oM^{\rel}(X_r, D_r)$ with the pre-image of the open substack $\oM^{\orb}(X_r) \subset \fM^{\orb}(X_r)$.  In other words, we are left to show that \emph{the diagram below is cartesian}:
\begin{equation*} \xymatrix{
\oM^{\rel}(X_r, D_r) \ar[r] \ar[d] & \oM^{\orb}(X_r) \ar[d] \\
\fM^{\rel}(X_r, D_r)^\ast \ar[r] & \fM^{\orb}(X_r)
} \end{equation*}

Suppose that $\xi$ denotes an $S$-point
\begin{equation} \label{eqn:35} \vcenter{\xymatrix{
C \ar[r] \ar[d] & \tX_r \ar[d] \\
\oC \ar[r] & X_r \times S
}} \end{equation}
of $\fM^{\rel}(X_r, D_r)^\ast$ that lies in the pre-image of $\oM^{\orb}(X_r)$.  Then $\oC \rightarrow X_r$ can be recovered uniquely and functorially from $C \rightarrow \tX_r$ as the stabilization of the composition $C \rightarrow \tX_r \rightarrow X_r$.  Note in particular that this permits us to identify the automorphism group $\Aut_{\fM^{\rel}(X_r, D_r)^\ast}(\xi)$ with the automorphism group of the pre-stable relative map $C \rightarrow \tX_r$.

Now we verify that $C \rightarrow \tX_r$ has finite automorphism group.  Let $\eta$ denote the image of $\xi$ in $\fM^{\orb}(X_r)$ and consider the exact sequence
\begin{equation*}
1 \rightarrow \Aut_{\fM^{\rel}(X_r, D_r)^\ast \mbox{\larger[-1]/} \fM^{\orb}(X_r)}(\xi) \rightarrow \Aut_{\fM^{\rel}(X_r, D_r)^\ast}(\xi) \rightarrow \Aut_{\fM^{\orb}(X_r)}(\eta) .
\end{equation*}
The group $\Aut_{\fM^{\orb}(X_r)}(\eta)$ is finite by hypothesis because $\eta$ lies in $\oM^{\orb}(X_r)$; likewise $\Aut_{\fM^{\rel}(X_r, D_r)^\ast \mbox{\larger[-1]/} \fM^{\orb}(X_r)}(\xi)$ is finite by the definition of $\tfM^{\rel}(X_r, D_r)^\ast$.  It follows that the middle group $\Aut_{\fM^{\rel}(X_r, D_r)^\ast}(\xi)$ is finite.  On the other hand, we have just seen that this group agrees with the automorphism group of $C \rightarrow \tX_r$ as a pre-stable relative map.

We conclude that Diagram~\ref{eqn:35} can be uniquely recovered as the image of the stable relative map $C \rightarrow \tX_r$ and the proof is complete.
\end{proof}

\subsection{Compatibility of obstruction theories}

We require a perfect relative obstruction theory for the map $\oM^{\rel}(X_r, D_r) \rightarrow \fM^{\rel}(\sA_r, \sD_r)^\ast$.  Fortunately the following lemma allows us to make use of results from Section~\ref{sec:obs}.

\begin{lemma} \label{lem:etale-and-birational}
The map $\fM^{\rel}(\sA_r, \sD_r)^\ast \rightarrow \fM^{\rel}(\sA_r, \sD_r)$ is \'etale and birational.
\end{lemma}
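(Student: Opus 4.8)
The morphism $\pi \colon \fM^{\rel}(\sA_r, \sD_r)^\ast \to \fM^{\rel}(\sA_r, \sD_r)$ forgets the intermediate curve $\oC$, the representable contraction $C \to \oC$, and the twisted pre-stable map $\oC \to \sA_r$, remembering only the relative pre-stable map $C \to \tsA_r$. The plan is to prove birationality and \'etaleness separately. For birationality I would exhibit a dense open of the target over which $\pi$ restricts to an isomorphism. Over the locus of totally non-degenerate objects of $\fM^{\rel}(\sA_r, \sD_r)$ --- unexpanded target, smooth source $C$, and image of $C$ not contained in $\sD_r$ --- the only completion to an object of $\fM^{\rel}(\sA_r, \sD_r)^\ast$ is the tautological one, with $\oC = C$, $C \to \oC$ the identity, and $\oC \to \sA_r$ the given map $C \to \tsA_r = \sA_r$; the representability and stabilization requirements are then immediate, and the automorphism group in condition~(5) is trivial because $\oC = C$ and the target is unexpanded. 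Thus $\pi$ is an isomorphism over this locus, which is dense in $\fM^{\rel}(\sA_r, \sD_r)$ by Lemma~\ref{lem:rel-dense-open} (carried over through the abstract isomorphism $(\sA_r, \sD_r) \simeq (\sA, \sD)$). This gives birationality.

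For \'etaleness, observe that $\pi$ is of finite type --- the data it forgets being of finite type --- so it suffices to verify the infinitesimal lifting criterion. The crucial input is a rigidity statement: for any relative pre-stable map $C \to \tsA_r$ over any base there is at most one completion to an object of $\fM^{\rel}(\sA_r, \sD_r)^\ast$, it is obtained by an explicit recipe that commutes with base change, and it exists precisely when condition~(5) holds, which is an open condition on the base. Concretely, $C \to \oC$ must be the contraction of $C$ that collapses exactly the components lying over the expanded bubbles of $\tsA_r$; the map $\oC \to \sA_r$ is then the descent of $C \to \tsA_r \to \sA_r$ along this contraction; and the orbifold structure of $\oC$ at the special points produced by the contraction is pinned down by the requirements that $\oC \to \sA_r$ be representable and that the stabilization of $C \to \oC$ be an isomorphism. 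Granting this, a square-zero lifting problem for $\pi$ has a unique solution: over the larger base one builds the completion from the given relative pre-stable map by the same recipe, base-change-compatibility identifies its restriction with the data recorded over the smaller base, and condition~(5) persists because it is open and the square-zero extension has the same underlying topological space. Therefore $\pi$ is formally \'etale, and being of finite type it is \'etale.

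I expect the main obstacle to be the rigidity statement for $\oC$: one must show, using the structure theory of twisted curves and their contractions (in the style of \cite{ACFW}), that the conditions ``$C \to \oC$ representable with isomorphic stabilization'' and ``$\oC \to \sA_r$ representable'' really do determine $\oC$ --- both its combinatorial type and its orbifold structure at the contracted special points --- uniquely and functorially in the base. Once that is in hand, the uniqueness of infinitesimal lifts and the identification of the dense locus on which $\pi$ is an isomorphism both follow formally, and the lemma is proved.
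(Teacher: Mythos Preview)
Your approach is essentially the same as the paper's, which is very brief: for \'etaleness it simply defers to \cite[Lemma~B~(ii) for $\Upsilon$]{AMW}, and for birationality it argues, as you do, that the map restricts to an isomorphism over the totally nondegenerate locus. One point to tighten: birationality requires the totally nondegenerate locus to be dense in the \emph{source} $\fM^{\rel}(\sA_r,\sD_r)^\ast$ as well as in the target, and you only invoke Lemma~\ref{lem:rel-dense-open} for density in the target. The paper handles this by establishing \'etaleness first and then observing that the preimage of a dense open under an \'etale (hence open) map is dense in the source. You should either reorder your two steps accordingly, or add a one-line remark after your \'etaleness argument to close this small gap.
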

\begin{proof}
A proof that it is \'etale can be adapted easily from \cite[Lemma~B~(ii) for $\Upsilon$]{AMW} and is omitted here.

For birationality, we use an argument similar to \cite[Lemma~B~(iv) for $\Upsilon$]{AMW}:  The map restricts to an isomorphism over the totally nondegenerate objects, which are dense in the target.  Because the map is \'etale, they are also dense in the source.
\end{proof}

\begin{corollary}
A relative obstruction theory for $\oM^{\rel}(X_r, D_r)$ over $\fM^{\rel}(\sA_r, \sD_r)$ is also a relative obstruction theory over $\fM^{\rel}(\sA_r, \sD_r)^\ast$ and yields the same virtual fundamental class.
\end{corollary}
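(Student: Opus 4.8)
The plan is to deduce this formally from Lemma~\ref{lem:etale-and-birational}. Write $p : \oM^{\rel}(X_r, D_r) \to \fM^{\rel}(\sA_r, \sD_r)$ for the structure morphism and $q : \oM^{\rel}(X_r, D_r) \to \fM^{\rel}(\sA_r, \sD_r)^\ast$ for its lift through the \'etale birational map $g : \fM^{\rel}(\sA_r, \sD_r)^\ast \to \fM^{\rel}(\sA_r, \sD_r)$. I would argue in three steps: first, that a relative obstruction theory for $p$ is, verbatim, a relative obstruction theory for $q$; second, that perfection is preserved; and third, that the two virtual fundamental classes coincide.

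For the first step I would observe that a square-zero lifting problem for $q$ consists of a square-zero extension $S \hookrightarrow S'$ with ideal $J$, a map $S \to \oM^{\rel}(X_r,D_r)$, and a compatible map $S' \to \fM^{\rel}(\sA_r,\sD_r)^\ast$; post-composing the last map with $g$ converts it into a lifting problem for $p$. Since $g$ is \'etale, hence formally \'etale, post-composition with $g$ is an equivalence from the groupoid of solutions of the $q$-problem to that of the $p$-problem, so the two problems carry the same obstruction class in $\fE(S,J)$. The axioms of an obstruction theory from \cite{obs} (functoriality, descent, additivity, left-exactness, naturality of the obstruction) make no reference to the target, so they transfer immediately, and $(\fE,\omega)$ is an obstruction theory for $q$. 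For perfection I would note that the associated $\cO$-module stack $\fE_{X/Y}(S) = \fE(S,\cO_S)$ is literally unchanged, so $\fE$ is a vector bundle stack over one target if and only if over the other; equivalently $\bL_{Y'/Y} \simeq 0$ forces $\bL_{X/Y'} \simeq \bL_{X/Y}$, so the induced complex has the same amplitude.

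For the comparison of virtual classes I would use that an \'etale morphism is smooth of relative dimension $0$, hence carries the trivial perfect obstruction theory with $g^! = g^\ast$, and that virtual pullbacks compose along compatible obstruction theories \cite{Manolache}, so $p^! = q^! \circ g^!$. Since $g$ is \'etale, $g^\ast\bigl[\fM^{\rel}(\sA_r,\sD_r)\bigr] = \bigl[\fM^{\rel}(\sA_r,\sD_r)^\ast\bigr]$, and therefore
\begin{multline*}
\bigl[\oM^{\rel}(X_r,D_r)\big/\fM^{\rel}(\sA_r,\sD_r)\bigr]^{\vir} = p^!\bigl[\fM^{\rel}(\sA_r,\sD_r)\bigr] \\ = q^!\bigl[\fM^{\rel}(\sA_r,\sD_r)^\ast\bigr] = \bigl[\oM^{\rel}(X_r,D_r)\big/\fM^{\rel}(\sA_r,\sD_r)^\ast\bigr]^{\vir}.
\end{multline*}
Alternatively, and without invoking $\bigl[\fM^{\rel}\bigr]$ at all, I would use that the intrinsic normal cone is insensitive to \'etale base change on the target, so $\fC_{X/Y'} = \fC_{X/Y}$ inside the common vector bundle stack $\fE_{X/Y}$, and Kresch's intersection with the zero section returns the same cycle.

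This corollary is essentially formal, so there is no serious obstacle; the one point that requires a moment's care is the cycle-level identity $g^\ast\bigl[\fM^{\rel}(\sA_r,\sD_r)\bigr] = \bigl[\fM^{\rel}(\sA_r,\sD_r)^\ast\bigr]$, where \'etaleness gives the equality componentwise and birationality (Lemma~\ref{lem:etale-and-birational}) guarantees $g$ introduces no spurious components. I note that this is exactly the type of hypothesis one feeds into \cite[Theorem~5.0.1]{costello} for the \'etale, pure-degree-one morphism $g$, so the corollary may equally be phrased as an instance of Costello's theorem.
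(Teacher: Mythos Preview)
Your argument is correct and is exactly the formal deduction from Lemma~\ref{lem:etale-and-birational} that the paper intends; the paper itself gives no proof for this corollary, treating it as immediate from the \'etaleness (and birationality) of $\fM^{\rel}(\sA_r,\sD_r)^\ast \to \fM^{\rel}(\sA_r,\sD_r)$. Your expansion---formal \'etaleness identifying the lifting problems, invariance of the intrinsic normal cone under \'etale change of base, and either Manolache's composition or Kresch's intersection---is precisely the content being invoked.
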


We apply this to the relative obstruction theory for $\oM^{\rel}(X_r, D_r)$ over $\fM^{\rel}(\sA_r, \sD_r)$ constructed in Section~\ref{sec:univ-obs}.  In order to apply Costello's theorem, we must verify that this obstruction theory is pulled back from the obstruction theory for $\oM^{\orb}(X_r)$ over $\fM^{\orb}(\sA_r)$ constructed in Section~\ref{sec:obs-orb}.

Consider a lifting problem where $S'$ is a square-zero extension of $S$ with ideal $J$:  
\begin{equation*} \xymatrix{
S \ar[r] \ar[d] & \oM^{\rel}(X_r,D_r) \ar[d] \\
S' \ar[r] \ar@{-->}[ur] & \fM^{\rel}(\sA_r, \sD_r)
} \end{equation*}
The obstruction group for $\oM^{\rel}(X_r, D_r)$ over $\fM^{\rel}(\sA_r, \sD_r)$ is the category of torsors on $C$ under the group of lifts of the left square in the diagram below:
\begin{equation} \label{eqn:37} \vcenter{\xymatrix{
C \ar[r] \ar[d] & X^{\exp}_r \ar[d] \ar[r] \ar[d] & X_r \ar[d] \\
C[\pi^\ast J] \ar[r] \ar@{-->}[ur] \ar@{-->}[urr] & \sA^{\exp}_r \ar[r] & \sA_r 
}} \end{equation}

On the other hand, the obstruction group for the relative obstruction theory pulled back from the map $\oM^{\orb}(X_r) \rightarrow \fM^{\orb}(\sA_r)$ the the category of torsors on $C$ under the group of lifts of the outer rectangle in~\eqref{eqn:37}.  Since the square on the right in~\eqref{eqn:37} is cartesian, it follows that the two obstruction groups are the same.

\subsection{Conclusion}
\label{sec:conclusion}

Fix combinatorial data $\Gamma$ for a stable map to $X$ relative to a divisor $D$.  These data include the genus of the curve, the homology class of its image in $X$, the number of marked points, and the orders of contact of those marked points to the divisor.  

As long as $\Gamma$ is a specification of combinatorial data that agrees with the requirements of the lemma, we have a cartesian diagram
\begin{equation*} \xymatrix{
\oM^{\rel}_\Gamma(X_r, D_r) \ar[r] \ar[d] & \oM^{\orb}_\Gamma(X_r) \ar[d] \\
\fM^{\rel}_{g=0}(\sA_r, \sD_r)' \ar[r] & \fM^{\orb}_{g=0}(\sA_r)',
} \end{equation*}
where $\fM^{\rel}_{g=0}(\sA_r, \sD_r)'$ denotes the pre-image of $\fM^{\orb}_{g=0}(\sA_r)'$ via the map $\fM^{\rel}_{g=0}(\sA_r, \sD_r)^\ast \rightarrow \fM^{\orb}_{g=0}(\sA_r)$.  We note that the lower horizontal map is of Deligne--Mumford type by the definition of $\fM^{\rel}(\sA_r, \sD_r)^\ast$.  Furthermore, Lemmas~\ref{lem:etale-and-birational},~\ref{lem:rel-dense-open}, and~\ref{lem:non-degen-dense-in-M-orb} show that each of $\fM^{\rel}_{g=0}(\sA_r, \sD_r)'$, $\fM^{\rel}_{g=0}(\sA_r, \sD_r)^\ast$, and $\fM^{\orb}_{g=0}(\sA_r)'$  contains the locus of totally non-degenerate objects as a dense open substack, so the lower horizontal arrow is birational as well.  We may therefore apply Costello's theorem and deduce that
\begin{equation*}
\Phi_\ast \bigl[\oM^{\rel}_\Gamma(X_r, D_r) \mbox{\larger[2]/} \fM^{\rel}(\sA_r, \sD_r)'\bigr]^\vir = \bigl[\oM^{\orb}_\Gamma(X_r) \mbox{\larger[2]/} \fM^{\orb}(\sA_r)' \bigr]^\vir .
\end{equation*}
On the other hand, by the discussion in Sections~\ref{sec:obs-orb} and~\ref{sec:obs-rel} we have
\begin{gather*}
\bigl[\oM^{\orb}_\Gamma(X_r) \mbox{\larger[2]/} \fM^{\orb}(\sA_r)' \bigr]^\vir = [\oM^{\orb}_\Gamma(X_r) ]^\vir \\
\bigl[\oM^{\rel}_\Gamma(X_r,D_r) \mbox{\larger[2]/} \fM^{\rel}(\sA_r, \sD_r)'\bigr]^\vir = [\oM^{\rel}_\Gamma(X_r, D_r)]^\vir .
\end{gather*}

\appendix


\section{Obstruction theories and local complete intersections}
\label{sect:lci}

In this section we use the Behrend--Fantechi formalism for obstruction theories~\cite{BF}.

\begin{lemma} \label{lem:obs-lci}
Let $\sM\to\sN$ be a representable, finite type morphism of locally Noetherian algebraic stacks and let $\EE\to \LL_{\sM/\sN}$ be a perfect relative obstruction theory.  Suppose that $\sN$ is smooth and that generically, $h^{-1}(\EE)=0$.  Then $\EE\to\LL_{\sM/\sN}$ is an isomorphism, and in particular, $\sM\to\sN$ is a local complete intersection morphism.
\end{lemma}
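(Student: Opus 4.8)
The plan is to prove the ostensibly stronger statement that $\sM \to \sN$ is \emph{smooth}; both conclusions of the lemma then follow immediately, since a smooth morphism is a local complete intersection and, for a smooth morphism, $\LL_{\sM/\sN} \simeq \Omega_{\sM/\sN}$ is a locally free sheaf placed in degree $0$, so that the obstruction theory $\EE \to \LL_{\sM/\sN}$ — which is by definition an isomorphism on $h^{0}$ — is then an isomorphism of locally free sheaves concentrated in degree $0$. Thus it suffices to show that $h^{-1}(\EE) = 0$ everywhere: once this is known, $\EE$ is a vector bundle in degree $0$ (a perfect complex of Tor-amplitude $[-1,0]$ with $h^{-1} = 0$ has $h^{0}$ flat and of finite presentation, hence locally free), and then for every square-zero extension $S' \supseteq S$ with ideal $J$ and every $S$-point $g$ of $\sM$ over $\sN$, the obstruction to lifting $g$ to $S'$ lives in a group that, by the obstruction-theory axioms, maps injectively into $\operatorname{Ext}^{1}_{\mathcal{O}_{S}}(Lg^{\ast}\EE, J) = 0$; so the lift exists, $\sM \to \sN$ is formally smooth, and being representable and of finite type it is smooth.

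It remains to prove $h^{-1}(\EE) = 0$. Since $h^{-1}(\EE)$ is coherent, the locus $U \subseteq \sM$ on which it vanishes is open, and it is dense by hypothesis. Running the argument of the previous paragraph over $U$ shows that $\sM \to \sN$ is smooth over $U$; as $\sN$ is smooth, $\sM$ is then regular — in particular reduced — at every point of $U$, and since $U$ is dense it contains every generic point of $\sM$, so $\sM$ is generically reduced. I would then argue that $h^{-1}(\EE) = 0$ as follows: writing $\EE$ locally as $[E^{-1} \xrightarrow{\delta} E^{0}]$ with $E^{i}$ finite locally free, $h^{-1}(\EE) = \ker\delta$ is a coherent subsheaf of $E^{-1}$ supported on the nowhere-dense set $\sM \smallsetminus U$; if $\sM$ is reduced then $E^{-1}$ is torsion-free (a locally free sheaf on a reduced Noetherian scheme injects into the product of its localizations at the generic points, since it is flat), so a subsheaf vanishing at every generic point is zero, whence $\ker\delta = 0$.

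The step I expect to be the real content — and the main obstacle — is passing from ``generically reduced'' (which the obstruction theory supplies for free) to ``reduced'' in the last line, equivalently ruling out that the nowhere-dense set $\sM \smallsetminus U$ still carries a nonzero piece of $h^{-1}(\EE)$. Here one must use that $\EE$ is a genuine perfect obstruction theory for $\sM$ over $\sN$ — in particular the identification $h^{0}(\EE) \cong \Omega_{\sM/\sN}$, not merely a perfect complex of the right amplitude mapping to $\LL_{\sM/\sN}$. I would attack it by a local computation: choose, \'etale-locally, a closed immersion $\sM \hookrightarrow V$ into a smooth $\sN$-scheme together with a presentation of $\EE$ compatible with the conormal sequence of the immersion (as in the construction of local models for obstruction theories), and check that the vanishing of $h^{-1}(\EE)$ at the generic points forces the defining ideal to be locally generated by a regular sequence at every point, so that $h^{-1}(\LL_{\sM/\sN})$ — and hence, via the surjection furnished by the obstruction theory, $h^{-1}(\EE)$ — vanishes identically; alternatively, one simply observes that in the intended applications $\sM$ is reduced, in which case the argument above applies as written.
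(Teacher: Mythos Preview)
Your strategy is to prove the stronger statement that $\sM \to \sN$ is smooth, but this is false under the hypotheses of the lemma. Take $\sN = \operatorname{Spec} k$ and $\sM = \operatorname{Spec} k[x,y]/(xy)$, with $\EE = \LL_{\sM/\sN}$ and the identity map as obstruction theory. Then $\sM$ is a hypersurface, hence lci, so $\EE$ is perfect of amplitude $[-1,0]$; away from the node $\sM$ is smooth, so $h^{-1}(\EE) = 0$ generically; the conclusion of the lemma holds trivially; and yet $\sM$ is not smooth.

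The false step is the parenthetical claim that a perfect complex of Tor-amplitude $[-1,0]$ with vanishing $h^{-1}$ has $h^0$ locally free. In the example, $\EE$ is represented by $\delta : R \to R^2$, $1 \mapsto (y,x)$, with $R = k[x,y]/(xy)$; a direct check shows $\ker\delta = 0$, so in fact $h^{-1}(\EE) = 0$ \emph{everywhere}, yet $h^0(\EE) = \Omega_{R/k}$ has fibre dimension $2$ at the node and $1$ elsewhere and is not locally free. Equivalently, $\delta \otimes_R k$ at the node is the zero map $k \to k^2$, so $H^{-1}(\EE \otimes^L k) \neq 0$ and $\EE$ does not have Tor-amplitude in $[0,0]$. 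Injectivity of a map of vector bundles is far weaker than being a subbundle inclusion; your torsion-free argument, even granting reducedness (and the node \emph{is} reduced), only yields the former, so the alternative offered in your last sentence is also insufficient.

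The route you sketch in your final paragraph is essentially the paper's argument, but its target must be adjusted: one shows directly that $\EE \to \LL_{\sM/\sN}$ is a quasi-isomorphism, not that $h^{-1}(\EE)$ vanishes. After reducing to a closed immersion $U \hookrightarrow W = \operatorname{Spec} A$ with $W$ smooth over $\sN$ and ideal $I$, the cone $\FF$ of $\LL_{W/\sN}|_U \to \EE$ is a vector bundle in degree $-1$ surjecting onto $I/I^2$, and this surjection is an isomorphism over a dense open. A basis of $\FF^{-1}$ lifts via Nakayama to generators $x_1,\ldots,x_d$ of $I$; now smoothness of $\sN$ enters: $W$ is regular, hence Cohen--Macaulay, so $\operatorname{depth}(I,A) = \operatorname{ht}(I) = d$ and the $x_i$ form a regular sequence. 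Thus $I/I^2$ is free of rank $d$, the surjection $\FF^{-1} \to I/I^2$ is an isomorphism, and hence so is $\EE \to \LL_{U/\sN}$.
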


\begin{proof}
We begin by reducing to the case where $\sM\to\sN$ is an embedding of affine schemes.  It suffices to prove the lemma after a base change by a smooth presentation $V\to\sN$.  Under such a base change, $\EE\to \LL_{\sM/\sN}$ pulls back to a perfect relative obstruction theory on $\sM\times_{\sN} V \to V$.  So we may assume that $\sN=\spec S$ is an affine noetherian scheme.  Now it suffices to prove the lemma after an \'etale base change $U\to\sM$, where $U$ is an affine scheme of finite type over $S$.

Let $\iota:U\to W$ be an embedding into an affine scheme $W=\spec A$ which is smooth over $\sN$.  Let $I$ be the ideal of $U$ in $W$.  Since $\iota^*\LL_{W/\sN}$ is a vector bundle in degree $0$ and $h^0(E)\to h^0(\LL_{U/\sN})$ is an isomorphism, $\iota^*\LL_{W/\sN}\to \LL_{U/\sN}$ lifts uniquely to $\iota^*\LL_{W/\sN}\to \EE$.  Let $\FF=Cone(\iota^*\LL_{W/\sN}\to \EE)$.  Then we have a morphism of distinguished triangles:
$$\xymatrix{
\iota^*L_{W/\sN} \ar[r] \ar[d]^= & \EE \ar[r] \ar[d] & \FF \ar[r] \ar[d] & \iota^*\LL_{W/\sN}[1] \ar[d]^= \\
\iota^*\LL_{W/\sN} \ar[r] & \LL_{U/\sN} \ar[r] & \LL_{U/W} \ar[r] & \iota^*\LL_{W/\sN}[1].}$$
By taking long exact sequences, we see that $\FF$ is represented by a vector bundle in degree $-1$ which surjects onto $h^{-1}(\LL_{U/W}) = I/I^2$.  The assumption that $h^{-1}(\EE)$ is generically $0$ implies that there is a dense open subset of $U$ over which $\FF^{-1}\to I/I^2$ is an isomorphism.

By restricting to a smaller open set, we may assume that $\FF^{-1}$ is free of rank $d$.  Then a basis of $\FF$ determines elements $x_1,\ldots,x_d\in I$ which generate $I$ modulo $I^2$.  In other words $I/(x_1,\ldots,x_d)$ is generated by the image of $I^2$.  Thus $I\cdot I/(x_1,\ldots,x_d) = I/(x_1,\ldots,x_d)$ and Nakayama's lemma implies that there is an element $a\in A$ such that $a\equiv 1$ modulo $I$ and $aI\subseteq (x_1,\ldots,x_d)$ \cite[2.2]{Matsumura}.  Since $a$ does not vanish on $U$, we may invert $a$ and assume that $I=(x_1,\ldots,x_d)$.  To show that $x_1,\ldots,x_d$ is a regular sequence, it suffices to show that $\mathrm{depth}(I,A)=d$ \cite[p.131]{Matsumura}.

By assumption, $U$ has a dense open set which is a local complete intersection.  It follows that $d$ is the codimension of $U$ in $W$.  But any proper ideal $I$ of a Cohen-Macaulay ring $A$ has $\mathrm{depth}(I,A)=\mathrm{ht}(I)$ \cite[17.4]{Matsumura}, so $U$ is a local complete intersection and $I/I^2$ is free with basis $x_1,\ldots,x_d$.  This shows that $F^{-1}\to I/I^2$ is an isomorphism, which implies that $\EE\to \LL_{U/\sN}$ is an isomorphism.
\end{proof}

\section{Notation index}
\label{app:notn}

\makenotn  

\bibliographystyle{amsalpha}
\bibliography{ACW}

\end{document}